\newtheorem{theorem}{Theorem}[section]
\newtheorem{lemma}[theorem]{Lemma}
\newtheorem{corollary}[theorem]{Corollary}
\newtheorem{proposition}[theorem]{Proposition}
\theoremstyle{definition}
\newtheorem{definition}[theorem]{Definition}
\newtheorem{remark}[theorem]{Remark}
\numberwithin{equation}{section}
\newcommand{\blangle}{\bigl\langle}
\newcommand{\brangle}{\bigr\rangle}
\newcommand{\ip}[1]{(#1)_\Omega}
\newcommand{\bip}[1]{\bigl(#1\bigr)_\Omega}
\newcommand{\aip}[1]{\left(#1\right)_\Omega}
\newcommand{\setof}[1]{[#1]}
\newcommand{\bsetof}[1]{\bigl[#1\bigr]}
\newcommand{\defn}{\coloneqq}
\newcommand{\nfed}{\eqqcolon}
\newcommand{\embeds}{\mathrel{\hookrightarrow}}
\newcommand{\embedsc}{\mathrel{{\mathrlap{\lhook\joinrel\relbar}{\,\,\hookrightarrow}}}}
\newcommand{\wto}{\mathrel{\rightharpoonup}}
\newcommand{\id}{\text{Id}}
\DeclareMathOperator{\diam}{diam}
\DeclareMathOperator{\Flux}{Flux}
\newcommand{\x}{{\mathrm{x}}}
\newcommand{\dd}{\,\mathrm{d}}
\newcommand{\eps}{\varepsilon}
\newcommand{\Le}{\mathrm{L}}
\newcommand{\LeLe}[2]{\Le_t^{#1}\Le_{\x}^{#2}}
\newcommand{\Lloc}{\Le_{\text{loc}}}
\renewcommand{\H}{\mathrm{H}}
\newcommand{\C}{\mathrm{C}}
\newcommand{\cA}{\mathcal{A}}
\newcommand{\cE}{\mathcal{E}}
\newcommand{\cI}{\mathcal{I}}
\newcommand{\cL}{\mathcal{L}}
\newcommand{\cO}{\mathcal{O}}
\newcommand{\cS}{\mathcal{S}}
\newcommand{\cT}{\mathcal{T}}
\newcommand{\cU}{\mathcal{U}}
\newcommand{\cY}{\mathcal{Y}}
\newcommand{\bN}{\mathbb{N}}
\newcommand{\bR}{\mathbb{R}}
\newcommand{\R}{\bR}
\newcommand{\N}{\bN}
\newcommand{\sj}{\mathsf{j}}
\newcommand{\sT}{\mathsf{T}}
\newcommand{\uad}{\cU_{\text{ad}}}
\newcommand{\optiprog}[4]{
     \begin{aligned}       
       #1 \quad & #2 \\
        \text{s.t.} \quad &\left\{ \quad
         \begin{aligned} #3 \end{aligned}\right.
      \end{aligned}       
      \tag{#4}
}
\begin{document}

\title{Optimal control of an
  energy-critical semilinear wave equation in 3D with spatially integrated control
  constraints}

\author[1,2]{Karl Kunisch} \ead{karl.kunisch@uni-graz.at}

\author[2]{Hannes Meinlschmidt\corref{cor1}} \ead{hannes.meinlschmidt@ricam.oeaw.ac.at}

\address[1]{Institute for Mathematics and Scientific Computing,
  University of Graz, Heinrichstra{\ss}e 36, 8010 Graz, Austria}

\address[2]{Johann Radon Institute for Computational and Applied
  Mathematics (RICAM), Altenberger Stra{\ss}e 69, 4040 Linz, Austria}

\cortext[cor1]{Corresponding author}


\begin{keyword}
  Optimal control of PDEs \sep Critical
  Wave Equation \sep Second-order optimality conditions \sep Nonsmooth
  regularization, \MSC[2010] 35L05, 35L71, 49J20, 49K20
\end{keyword}

\begin{abstract} This paper is concerned with an optimal control problem subject to
  the $\H^1$-critical defocusing semilinear wave equation on a smooth and bounded
  domain in three spatial dimensions. Due to the criticality of the nonlinearity in
  the wave equation, unique solutions to the PDE obeying energy bounds are only
  obtained in special function spaces related to Strichartz estimates and the
  nonlinearity. The optimal control problem is complemented by pointwise-in-time
  constraints of Trust-Region type $\|u(t)\|_{\Le^2(\Omega)} \leq \omega(t)$. We
  prove existence of globally optimal solutions to the optimal control problem and
  give optimality conditions of both first- and second order necessary as well as
  second order sufficient type. A nonsmooth regularization term for the natural
  control space $\Le^1(0,\sT;\Le^2(\Omega))$, which also promotes sparsity in time of
  an optimal control, is used in the objective functional.
\end{abstract}

\maketitle

\section{Introduction} \label{sec:introduction}

We consider the optimal control problem
\begin{equation}
  \optiprog{\min_{y,u}}{\ell(y,u)}{u \in \uad, \\
    y~\text{is the solution to~\eqref{eq:main-equation},}}{OCP} \label{eq:OCP}
\end{equation}
where the underlying partial differential equation is the $\H^1$-critical defocusing
wave equation on a bounded domain $\Omega$ with smooth boundary in three spatial
dimensions over a finite interval $(0,\sT)$, complemented with homogeneous Dirichlet
boundary conditions, in the prototype form
\begin{equation}
  \label{eq:main-equation}\left.
    \begin{aligned}
      \partial_t^2 y - \Delta y + y^5 & = u && \text{in}~(0,\sT) \times \Omega, \\
      y & = 0 && \text{on}~(0,\sT) \times \partial\Omega, \\
      \bigl(y(0),\partial_t y(0)\bigr) & = (y_0,y_1) && \text{in}~\Omega.
    \end{aligned}
    \qquad\right\} \tag{CWE}
\end{equation}
We suppose that
$\xi_0 \defn (y_0,y_1) \in \H^1_0(\Omega) \times \Le^2(\Omega) \nfed \cE$ and
$u \in \Le^1(0,\sT;\Le^2(\Omega))$, which is the natural $\H^1$-setting for the wave
equation. The performance index $\ell$ for~\eqref{eq:main-equation} is chosen to be
\begin{multline*}
  \ell(y,u) 
  \defn \frac12\|y(\sT) - y_d\|_{\Le^2(\Omega)}^2 +
  \frac\gamma4\|y\|_{\Le^4(0,\sT;\Le^{12}(\Omega))}^4 \\
  + \beta_1 \|u\|_{\Le^1(0,\sT;\Le^2(\Omega))} + \frac{\beta_2}2
  \|u\|_{\Le^2(0,\sT;\Le^2(\Omega))}^2
\end{multline*}
for $y_d \in \Le^2(\Omega)$ and scaling parameters
$\gamma,\beta_1,\beta_2$. The objective in~\eqref{eq:OCP} is thus to
find a control $u \in \uad$ such that the associated solution
to~\eqref{eq:main-equation} $y(\sT)$ at time $\sT$ matches a given
profile $y_d$ as well as possible in the $\Le^2$-sense. The parameters
$\gamma, \beta_1$ and $\beta_2$ in $\ell$ are nonnegative. It will be
specifically mentioned if their positivity is required. While the
$\Le^2(0,\sT;\Le^2(\Omega))$ term describes a quadratic control cost, the
$\Le^1(0,\sT;\Le^2(\Omega))$ term is known to be sparsity
enhancing. The purpose of the $\Le^4(0,\sT;\Le^{12}(\Omega))$ norm
term will become clear later. The constraint set $\uad$ is of the form
\begin{equation*}
  \uad \defn \Bigl\{v \colon (0,\sT) \to \Le^2(\Omega) \colon
  \|v(t)\|_{\Le^2(\Omega)} \leq  \omega(t)~\text{f.a.a.}~t
  \in (0,\sT)\Bigr\}
\end{equation*}
for a measurable function $\omega$ which is nonnegative almost everywhere on
$(0,\sT)$.
It models a maximum overall input power at every time $t \in (0,\sT)$ for the
controls $u$. We emphasize that $\omega$ is not assumed to be bounded away from $0$
uniformly almost everywhere. It is thus possible to model e.g.\ a forced soft
``shut-off'' or decay to zero of $\|u(t)\|_{\Le^2(\Omega)}$ as $t \searrow T$ for
some $T \in (0,\sT]$.

\subsection*{Context}

The state equation~\eqref{eq:main-equation} is a semilinear wave equation.  Such
equations are of interest in several areas of natural sciences, in particular in
relation to mathematical physics~\cite{Lai14,Stu04,Stu04a,Hei52,Schi51,Joer59}, in
nonlinear elasticity~\cite{Lev88}, and the theory of vibrating strings~\cite{SHNF16}.

The exponent $5$ in the power-law nonlinearity $y^5$ in~\eqref{eq:main-equation} is
the $\H^1$-critical one since it satisfies $5 = \frac{n+2}{n-2}$, with $n = 3$ being
the space dimension. This terminology stems from the case $\Omega = \R^n$ where
``critical'' implies that (classical) solutions $y$
to~\eqref{eq:main-equation} with $u = 0$ are
invariant under the scaling
$\lambda \mapsto \lambda^{-\frac{n-2}{2}}y(\frac{t}{\lambda},\frac\x\lambda)$ and
thereby preserve $\|\xi_0\|_\cE$, cf.\ e.g.~\cite[Ch.~3.1]{T06}. A major difficulty
here is that one does not obtain a bound on $y \in \Le^5(0,\sT;\Le^{10}(\Omega))$ and
thus on $y^5 \in \Le^1(0,\sT;\Le^2(\Omega))$, which makes global-in-time existence
difficult to prove. It turns out that a uniform bound on
$y \in \Le^4(0,\sT;\Le^{12}(\Omega))$ is also sufficient due to
interpolation and $\Le^\infty(0,\sT;\Le^6(\Omega))$ energy conservation. The
nonlinearity is defocusing due to its sign, which does not play a role for local
existence of solutions to~\eqref{eq:main-equation}, but is crucial for long-time,
i.e., global-in-time, existence.

There is a large body of rather recent work about the analysis of
solutions to critical wave equations and their global-in-time
existence, including monographs (partially) dedicated to the topic
such as~\cite{T06} or~\cite{S08}. We focus on the works on space
dimension 3. Historically, global existence was proven first for
$\Omega = \R^3$ by Grillakis~\cite{Gri92} in~1992 for smooth solutions
and by Shatah and Struwe~\cite{SS94,SS93} for energy space solutions
in~1993/1994. Smith and Sogge were able to extend this result to the
exterior of convex obstacles~\cite{SS95}, and finally the case of a
bounded domain $\Omega$ with Dirichlet conditions was treated by Burq,
Lebeau, and Planchon~\cite{BLP08} in~2008. This was followed by the
treatment of the Neumann conditions case by Burq and
Planchon~\cite{BP09}. By now, improved Strichartz estimates compared
to the ones which were available in~\cite{BLP08} are proven by Blair,
Smith and Sogge~\cite{BSS09} and these allow for a slightly more
convenient existence proof as outlined in~\cite[Ch.~IV]{S08}. Of
course, this is all for the defocusing case since the mentioned works
establish global existence for initial data of arbitrary
size. Extensions of the mentioned results and especially the focusing
case are in the focus of current research; we mention
exemplarily~\cite{Don17,Ken17,JLSX16,DKM16,DKM16a}.

Semilinear wave equations have attracted significant interest in the
classical control theory community and are a subject of ongoing
research. Let us just mention for example~\cite{JL13,DL09,DLZ03,LD93},
where subcritical nonlinearities are considered, and the more recent
works~\cite{KSZ16,CFM16} for the critical case. On the other hand, the
literature regarding optimal control of semilinear wave equations and
especially the one about stronger nonlinearities appears to be rather
scarce. We refer to~\cite{RM04} and related works, where the focus
lies more on the state constraints imposed on the system, or
to~\cite{FRW96} with a mild nonlinearity. We are not aware of any work
related to the optimal control of a \emph{critical} semilinear wave
equation.

The contributions of this work are thus threefold:
\begin{itemize}[leftmargin=1.15em]
\item Up to now, the proofs of global existence of solutions to~\eqref{eq:main-equation}
  mentioned above do not incorporate forcing terms or controls $u$. We
  thus explicitly revisit the proof in~\cite{BLP08} to obtain a global existence
  result including the control. Due to the lack of a uniform bound on the
  nonlinearity, as mentioned above, the proof is quite sophisticated and taylored to
  the critical nature of the problem.
\item We show existence of globally optimal solutions to~\eqref{eq:OCP} and derive
  optimality conditions of both first-and second order type. Again, a particular
  point is that there is no uniform bound on the $\Le^1(0,\sT;\Le^2(\Omega))$ norm of
  $y^5$ for varying controls $u$ from the PDE~\eqref{eq:main-equation} which we thus
  have to enforce using the cost functional $\ell$ with $\gamma > 0$. The derivative
  of the associated term then enters the optimality conditions.
\item We consider the constraint set of pointwise-in-time Trust Region type $\uad$
  for the optimal control problem which seems not to have received much attention in
  the available literature so far. This plays a most prominent and demanding role in
  the derivation of second order necessary optimality conditions for~\eqref{eq:OCP}.
  It is of independent interest and a novel contribution in its own right.
\end{itemize}
Let us also point out that we could also consider more general nonlinearities $f(y)$
in place of $y^5$ in~\eqref{eq:main-equation}, as long as they exhibit comparable
growth and continuity properties. We have chosen to omit the technical details for
the sake of exposition.

\subsection*{Overview}
We first establish several basic but fundamental results about solutions
to~\eqref{eq:main-equation} in Section~\ref{sec:exist-uniq-local}. This includes the
concept of mild and weak solutions to~\eqref{eq:main-equation} and several important
estimates for solutions to wave equations. It ends with local-in-time wellposedness
of~\eqref{eq:main-equation}. As announced above, the solution regularity will then be
$(y,y') \in \C([0,T^\bullet);\cE)$ with
$y \in \Lloc^4([0,T^\bullet);\Le^{12}(\Omega))$ for some $T^\bullet \in (0,\sT]$ but
there will be no uniform bound in the latter space. In
Section~\ref{sec:global-solutions}, we establish that such local-in-time solutions in
fact exist globally in time in the energy space class by incorporating the
inhomogeneity $u$ into the related proof in~\cite{BLP08}. The optimal control problem
is treated in Section~\ref{sec:optimal-control}. After some preparatory
differentiability results, we prove existence of globally optimal controls
for~\eqref{eq:OCP} as well as necessary optimality conditions of first and second
order, and also second order sufficient conditions. For the latter, we need to assume
that $\beta_2 > 0$.

\subsection*{Notation and conventions}
We already mentioned above that we often consider the energy space
$\cE \defn \H^1_0(\Omega) \times \Le^2(\Omega)$ and throughout equip $\H^1_0(\Omega)$
with the norm $\|y\|_{\H^1_0(\Omega)} \defn \|\nabla y\|_{\Le^2(\Omega)}$. Moreover,
for a time-dependent function $y$, we write
\begin{equation*}
  \xi_y(t) \defn \bigl(y(t),\partial_t y(t)\bigr) \quad \text{and} \quad \bigl\|\xi_y(t)\bigr\|_{\cE}^2 \defn
  \bigl\|\nabla y(t)\bigr\|_{\Le^2(\Omega)}^2 + \bigl\|\partial_t y(t)\bigr\|_{\Le^2(\Omega)}^2.
\end{equation*}
We use $\ip{\cdot,\cdot}$ for the $\Le^2(\Omega)$ inner product and write
$A \lesssim B$ if there is a constant $C >0$ such that $A \leq C\cdot B$. If
necessary, a dependency of the constant $C$ on another quantity $D$ will be denoted
by $A \lesssim_D B$. All other notation will be standard. We consider all function
spaces to be real ones.

Lastly, the global time interval length $\sT$ is given and fixed for this work, but
we will sometimes have to deal with the theory of functions or solutions on intervals
other than $[0,\sT]$. In these cases, let $T \in (0,\sT]$, and consider, if
necessary, implicitly a dilation of $[t_0,t_1] \subseteq [0,\sT]$ to $[0,T]$ via
$T \defn t_1 -t_0$.

\section{Existence and uniqueness of local solutions} \label{sec:exist-uniq-local}

The classical notion of a solution in the energy space $\cE$ is that of a \emph{mild
  solution}. For this, let us introduce the Laplacian operator $\Delta$ in
$\Le^2(\Omega)$ given by
\begin{align*}
  D(\Delta) & \defn \Bigl\{\varphi \in \H^1_0(\Omega) \colon \exists f \in \Le^2(\Omega)
  \colon \bip{\nabla\varphi,\nabla\psi} = \bip{f,\psi}~\text{for all}~\psi \in
  \H^1_0(\Omega)  \Bigr\}, \\
  \Delta \varphi & \defn -f.
\end{align*}

\begin{definition}[Mild solution]
  \label{def:mild-solution}
  We say that the function $y$ is a \emph{mild solution} to~\eqref{eq:main-equation}
  on $[0,T]$ if $y \in \Le^4(0,T;\Le^{12}(\Omega))$ and $\xi_y \in \C([0,T];\cE)$
  with $\int_0^t \xi_y(s) \dd s \in D(\Delta) \times \H^1_0(\Omega)$ for all
  $t \in (0,T]$ and
  \begin{equation}\label{eq:def-mild-solution}
    \xi_y(t) =
    \xi_0
    +
    \begin{pmatrix}
      0 & \id \\ \Delta & 0
    \end{pmatrix} \int_0^t \xi_y(s) \dd s + \int_0^t
    \begin{pmatrix}
      0 \\ u(s) - y^5(s)
    \end{pmatrix}
    \dd s  \quad \text{for all}~t \in [0,T]
  \end{equation}
  is satisfied.
\end{definition}

\begin{remark}\label{rem:interpolation-strichartz}
  Let us briefly comment on the---at first glance---somewhat curious requirement
  $y \in \Le^4(0,T;\Le^{12}(\Omega))$ in the definition of a mild solution. By
  Sobolev embedding, we have $\H^1_0(\Omega) \embeds \Le^6(\Omega)$, so a mild
  solution $y$ can be considered as an element of
  $\Le^\infty(0,T;\Le^6(\Omega)) \cap \Le^4(0,T;\Le^{12}(\Omega))$. The H\"older
  inequality shows that
  \begin{equation}
    \label{eq:interpolation-l5l10}
    \|f\|_{\Le^5(0,T;\Le^{10}(\Omega))}^5 \leq \|f\|_{\Le^4(0,T;\Le^{12}(\Omega))}^4
    \|f\|_{\Le^\infty(0,T;\Le^6(\Omega))}
  \end{equation}
  and thus
  \begin{equation}
    \label{eq:interpolation-l1l2}
    \|f^5\|_{\Le^1(0,T;\Le^{2}(\Omega))} \leq  \|f\|_{\Le^4(0,T;\Le^{12}(\Omega))}^4
    \|f\|_{\Le^\infty(0,T;\Le^6(\Omega))}
  \end{equation}
  for all $f \in \Le^4(0,T;\Le^{12}(\Omega)) \cap
  \Le^\infty(0,T;\Le^6(\Omega))$. This shows that for a mild solution $y$ on $[0,T]$
  we have $y^5 \in \Le^1(0,T;\Le^2(\Omega))$ such that the second row
  in~\eqref{eq:def-mild-solution} is in fact self-consistent. Of course, we could
  have required the weaker condition $y \in \Le^5(0,T;\Le^{10}(\Omega))$ instead at
  this point; we will see the benefit of the stronger requirement later.
\end{remark}

Following~\cite{KSZ16}, we moreover introduce the following concept of solution for
the problem~\eqref{eq:main-equation} which is more suited to the optimal control
problem. It is named after the authors of~\cite{SS94,SS93}.

\begin{definition}[Shatah-Struwe solution]\label{def:weak-solution}
  We say that the function $y$ is a \emph{Shatah-Struwe solution}
  to~\eqref{eq:main-equation} on $[0,T]$ if $y \in \Le^4(0,T;\Le^{12}(\Omega))$ and
  $\xi_y \in \Le^\infty(0,T;\cE)$ with $\xi_y(0) = \xi_0$ satisfies the weak
  formulation
  \begin{multline*}
    -\int_0^T \bip{\partial_t y(t),\partial_t \varphi(t)} \dd t + \int_0^T
    \bip{\nabla y(t),\nabla\varphi(t)} \dd t + \int_0^T \bip{y^5(t),\varphi(t)}\dd t
    \\ = \int_0^T \bip{u(t),\varphi(t)} \dd t \qquad \text{for all}~\varphi \in
    \C_c^\infty\bigl((0,T) \times \Omega\bigr).
  \end{multline*}
\end{definition}

Note that the notion of a Shatah-Struwe solution in fact also makes sense if $y$ does
not have the additional integrability property. It is also meaningful for initial
data only from $\H^{-1}(\Omega) \times \Le^2(\Omega)$, since the solution will be
continuous with values in $\H^{-1}(\Omega) \times \Le^2(\Omega)$, and for
$u \in \Le^1(0,T;\H^{-1}(\Omega)$. For our purpose, we had supposed
$\xi_0 \in \cE = \H^1_0(\Omega) \times \Le^2(\Omega)$ and
$u \in \Le^1(0,T;\Le^2(\Omega))$; thus, we can in fact show that Shatah-Struwe and
mild solutions coincide and are unique. The additional integrability property
$y \in \Le^4(0,T;\Le^{12}(\Omega))$ is also important here. The first step is to
establish conservation of energy for Shatah-Struwe solutions, from which we
immediately obtain \emph{continuity}, and later also \emph{uniqueness} of such
solutions:

\begin{proposition}[Energy conservation,~{\cite[Prop.~3.3]{KSZ16}}]
  \label{prop:energy-conservation} Let $y$ be a Shatah-Struwe solution
  of~\eqref{eq:main-equation} on $[0,T]$. Then the \emph{energy function} $E_y$
  associated to $y$ given by
  \begin{equation*}
    E_y(t) \defn  \frac12\|\xi_y(t)\|_{\cE}^2 + \frac16 \|y(t)\|_{\Le^6(\Omega)}^6 - \int_0^t \bip{u(s),\partial_t y(s)} \dd s
  \end{equation*}
  is absolutely continuous. Moreover,
  $E_y(t) = E_y(0) = \frac12 \|\xi_0\|_{\cE}^2 + \frac16 \|y_0\|_{\Le^6(\Omega)}^6$
  for all $t \in [0,T]$, and we have $\xi_y \in \C([0,T];\cE)$.
\end{proposition}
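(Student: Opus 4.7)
The plan is to reduce the claim to the energy identity for the linear inhomogeneous wave equation with $\Le^1(0,T;\Le^2(\Omega))$ source, for which the conclusion is classical. First, by Remark~\ref{rem:interpolation-strichartz} the Shatah-Struwe integrability $y \in \Le^4(0,T;\Le^{12}(\Omega))$ combined with the a~priori bound $y \in \Le^\infty(0,T;\Le^6(\Omega))$ (from $\xi_y \in \Le^\infty(0,T;\cE)$ and the Sobolev embedding $\H^1_0(\Omega) \embeds \Le^6(\Omega)$) yields $y^5 \in \Le^1(0,T;\Le^2(\Omega))$. Setting $f \defn u - y^5 \in \Le^1(0,T;\Le^2(\Omega))$, the weak formulation from Definition~\ref{def:weak-solution} says that $y$ is a weak solution of the linear Dirichlet problem $\partial_t^2 y - \Delta y = f$ on $(0,T)\times\Omega$ with initial datum $\xi_0 \in \cE$.

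For this linear problem, it is classical that $\xi_y \in \C([0,T];\cE)$ and that the energy identity
\[
  \tfrac12\|\xi_y(t)\|_{\cE}^2 - \tfrac12\|\xi_0\|_{\cE}^2 = \int_0^t \bip{f(s),\partial_t y(s)} \dd s \qquad \text{for all}~t \in [0,T]
\]
holds. I would establish this by the standard time-regularization argument: after a suitable extension of $y$ and $f$ across $\{0,T\}$, convolve with a symmetric temporal mollifier $\rho_\eps$ to obtain $y_\eps$, which is smooth in time and solves the wave equation with source $f_\eps \defn \rho_\eps \star_t f$. Testing the $\eps$-equation against $\partial_t y_\eps$ on $(0,t)$ yields an exact energy identity for $y_\eps$. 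As $\eps \to 0$, the right-hand side converges because $f_\eps \to f$ in $\Le^1(\Le^2)$ while $\partial_t y_\eps$ is uniformly bounded in $\Le^\infty(\Le^2)$; the energy identity itself then forces $\{\xi_{y_\eps}\}$ to be Cauchy in $\C([0,T];\cE)$, which yields strong continuity and the identity in the limit.

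What remains is to reconcile the $-\int_0^t \bip{y^5,\partial_t y} \dd s$ contribution on the right-hand side with the $\Le^6$ boundary terms in $E_y$ via the chain rule
\[
  \tfrac16\|y(t)\|_{\Le^6(\Omega)}^6 - \tfrac16\|y_0\|_{\Le^6(\Omega)}^6 = \int_0^t \bip{y^5(s),\partial_t y(s)} \dd s.
\]
Justification proceeds again by temporal mollification of $y$, where the chain rule is trivial, passing to the limit using the strong convergence $y_\eps \to y$ in $\Le^5(0,T;\Le^{10}(\Omega))$---valid since $y \in \Le^5(\Le^{10})$ by~\eqref{eq:interpolation-l5l10}---and the uniform $\Le^\infty(\Le^2)$-bound on $\partial_t y_\eps$, which together with the algebraic factorization of $y_\eps^5 - y^5$ give convergence of $\bip{y_\eps^5,\partial_t y_\eps}$ in $\Le^1(0,T)$. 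Combining the two identities then yields $E_y(t) = E_y(0)$, and absolute continuity of $E_y$ is immediate since the only $t$-dependent term remaining, namely $t \mapsto \int_0^t \bip{u,\partial_t y} \dd s$, is an indefinite integral of an $\Le^1(0,\sT)$-function. The principal obstacle I expect is the passage to the limit in the mollification step: handling the temporal extension near the endpoints and identifying the initial trace $\xi_y(0) = \xi_0$ strongly in $\cE$ requires some care, and the entire argument is only possible because the critical integrability $y \in \Le^4(\Le^{12})$ promotes the nonlinearity from its natural $\Le^\infty(\Le^{6/5})$ regularity to $\Le^1(\Le^2)$, which is precisely the threshold at which the linear energy identity applies.
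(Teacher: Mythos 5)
The paper does not prove this proposition at all: it is imported verbatim from~\cite[Prop.~3.3]{KSZ16}, with only the side remark that the continuity of $\xi_y$ follows from the continuity of $E_y$ via~\cite[Ch.~3, Thm.~8.2]{LM72}. Your argument is the standard one underlying that reference and is sound: the promotion of $y^5$ to $\Le^1(0,T;\Le^2(\Omega))$ via~\eqref{eq:interpolation-l1l2} is exactly the point of the extra Strichartz integrability, the temporal mollification legitimately justifies both the linear energy identity and the chain rule for $\tfrac16\|y(t)\|_{\Le^6(\Omega)}^6$, and the one genuinely delicate step you flag --- recovering the traces at $t=0$ and $t=T$ strongly in $\cE$ by combining weak continuity in $\Le^2(\Omega)\times\H^{-1}(\Omega)$ with the norm convergence forced by the energy identity --- is precisely the Lions--Magenes argument the paper cites.
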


Here, the continuity of $\xi_y$ follows from the continuity of $E_y$,
cf.~\cite[Ch.~3, Thm.~8.2]{LM72}.  The energy conservation gives us an \emph{a
  priori} bound on the $\C([0,T];\cE)$ norm of $\xi_y$ and on
$\|y\|_{\Le^\infty(0,T;\Le^6(\Omega))}$ which will prove very useful when proving
that solutions exist globally-in-time:

\begin{lemma}
  \label{lem:energy-conservation-estimate}
  Let $y$ be a Shatah-Struwe solution to~\eqref{eq:main-equation} on $[0,T]$.
  Then 
  we have
  \begin{equation}\label{eq:a-priori-estimate-energy}
    \|\xi_y\|_{\C([0,T];\cE)}^2 +  \|y\|_{\Le^\infty(0,T;\Le^6(\Omega))}^6
    \lesssim\|\xi_0\|_{\cE}^2 +  \|y_0\|_{\Le^6(\Omega)}^6 +
    \|u\|_{\Le^1(0,\sT;\Le^2(\Omega))}^2 \nfed E_0.
  \end{equation}
\end{lemma}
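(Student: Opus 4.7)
The plan is to use energy conservation (Proposition~\ref{prop:energy-conservation}) directly. Rearranging the identity $E_y(t) = E_y(0)$ yields
\begin{equation*}
  \tfrac12\|\xi_y(t)\|_\cE^2 + \tfrac16 \|y(t)\|_{\Le^6(\Omega)}^6 = \tfrac12\|\xi_0\|_\cE^2 + \tfrac16 \|y_0\|_{\Le^6(\Omega)}^6 + \int_0^t \bip{u(s),\partial_t y(s)} \dd s
\end{equation*}
for every $t \in [0,T]$, so the only nontrivial step is to control the forcing integral by something that can be absorbed into the left-hand side.

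I would estimate the forcing term by Cauchy--Schwarz in space and a trivial $\Le^\infty$-$\Le^1$ pairing in time:
\begin{equation*}
  \left|\int_0^t \bip{u(s),\partial_t y(s)} \dd s\right| \leq \|u\|_{\Le^1(0,\sT;\Le^2(\Omega))}\, \|\partial_t y\|_{\Le^\infty(0,T;\Le^2(\Omega))} \leq \|u\|_{\Le^1(0,\sT;\Le^2(\Omega))}\, \|\xi_y\|_{\C([0,T];\cE)},
\end{equation*}
where the continuity of $\xi_y$ asserted by Proposition~\ref{prop:energy-conservation} ensures that $\|\xi_y\|_{\C([0,T];\cE)}$ is finite and the supremum in time is attained.

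Taking the supremum in $t$ on both sides of the energy identity then gives
\begin{equation*}
  \|\xi_y\|_{\C([0,T];\cE)}^2 + \|y\|_{\Le^\infty(0,T;\Le^6(\Omega))}^6 \lesssim \|\xi_0\|_\cE^2 + \|y_0\|_{\Le^6(\Omega)}^6 + \|u\|_{\Le^1(0,\sT;\Le^2(\Omega))}\, \|\xi_y\|_{\C([0,T];\cE)}.
\end{equation*}
An application of Young's inequality $ab \leq \tfrac12 a^2 + \tfrac12 b^2$ to the last term absorbs half of $\|\xi_y\|_{\C([0,T];\cE)}^2$ into the left-hand side and produces $\tfrac12 \|u\|_{\Le^1(0,\sT;\Le^2(\Omega))}^2$ on the right, finishing the proof after dividing by the remaining constant.

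There is no real obstacle here; this is essentially a bookkeeping exercise once Proposition~\ref{prop:energy-conservation} is available. The only subtle point worth flagging is that the estimate must produce the \emph{square} of $\|u\|_{\Le^1(0,\sT;\Le^2(\Omega))}$ in the bound, which is why Young's inequality (rather than a direct linear estimate) is needed to pair the linear appearance of $u$ in the energy identity with the quadratic norm of $\xi_y$ on the left.
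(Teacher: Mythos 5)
Your proof is correct and follows essentially the same route as the paper: rearrange the energy identity $E_y(t)=E_y(0)$, bound the forcing integral by $\|u\|_{\Le^1(0,T;\Le^2(\Omega))}\|\partial_t y\|_{\Le^\infty(0,T;\Le^2(\Omega))}$, take the supremum in $t$, and absorb via Young's inequality, using the finiteness of $\|\xi_y\|_{\C([0,T];\cE)}$ from Proposition~\ref{prop:energy-conservation} to justify the absorption. The only cosmetic difference is that the paper keeps a free parameter $\eps$ in Young's inequality and chooses it small at the end, whereas you fix the weights $\tfrac12,\tfrac12$; this still works provided the implicit constant from the $\lesssim$ step is folded into the $\|u\|$ factor before applying Young, as you indicate.
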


\begin{proof}
  From $E_y(t) = E_y(0)$, we clearly find
  \begin{multline*}
    \frac12\|\xi_y(t)\|_{\cE}^2 + \frac16 \|y(t)\|_{\Le^6(\Omega)}^6 - \|\partial_t
    y\|_{\Le^\infty(0,t;\Le^2(\Omega))}\|u\|_{\Le^1(0,t;\Le^2(\Omega))} \\\leq E_y(t)
    = E_y(0) = \frac12\|\xi_0\|_{\cE}^2 + \frac16 \|y_0\|_{\Le^6(\Omega)}^6,
  \end{multline*}
  so for every $\eps > 0$
  \begin{multline*}
    \sup_{t \in [0,T]} \left(\frac12\|\xi_y(t)\|_{\cE}^2 + \frac16
      \|y(t)\|_{\Le^6(\Omega)}^6\right) \\ \leq E_y(0) + \eps\|\partial_t
    y\|_{\Le^\infty(0,T;\Le^2(\Omega))}^2 + \frac1{4\eps}
    \|u\|_{\Le^1(0,T;\Le^2(\Omega))}^2.
  \end{multline*}
  This implies
  \begin{multline*}
    \frac12\|\xi_y\|_{\C([0,T];\cE)}^2 + \frac16
    \|y\|_{\Le^\infty(0,T;\Le^6(\Omega))}^6 \\\lesssim E_y(0) + \eps\|\partial_t
    y\|_{\Le^\infty(0,T;\Le^2(\Omega))}^2 + \frac1{4\eps}
    \|u\|_{\Le^1(0,T;\Le^2(\Omega))}^2,
  \end{multline*}
  and with $\eps$ small enough we can absorb the
  $\eps\|\partial_t y\|_{\Le^\infty(0,T;\Le^2(\Omega))}^2$ term in the left-hand side
  to obtain~\eqref{eq:a-priori-estimate-energy}.
\end{proof}

Next we show that Shatah-Struwe and mild solutions coincide.

\begin{lemma}
  \label{lem:weak-and-mild} A function $y$ is a mild solution
  to~\eqref{eq:main-equation} if and only if it is a Shatah-Struwe solution.
\end{lemma}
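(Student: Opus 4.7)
My plan is to reduce the claim to the equivalence of mild and weak solutions for a \emph{linear} inhomogeneous wave equation. For any $y$ fitting either definition, the Sobolev embedding $\H^1_0(\Omega) \embeds \Le^6(\Omega)$ combined with the interpolation~\eqref{eq:interpolation-l1l2} places $y^5$ in $\Le^1(0,T;\Le^2(\Omega))$, so the forcing $f \defn u - y^5$ also lies in $\Le^1(0,T;\Le^2(\Omega))$. The identity~\eqref{eq:def-mild-solution} and the weak formulation of Definition~\ref{def:weak-solution} can then be read as two incarnations---mild and distributional---of the same linear Cauchy problem $\partial_t^2 \tilde y - \Delta\tilde y = f$, $\xi_{\tilde y}(0) = \xi_0$ on $\cE$.

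First I would handle \emph{mild} $\Rightarrow$ \emph{Shatah-Struwe}: pairing the second row of~\eqref{eq:def-mild-solution} pointwise in $t$ with $\psi \in \C_c^\infty(\Omega)$ and using the hypothesis $\int_0^t \xi_y(s) \dd s \in D(\Delta) \times \H^1_0(\Omega)$ together with Green's identity (to shift $\Delta$ onto $\psi$) yields
\begin{equation*}
  \bip{\partial_t y(t),\psi} = \bip{y_1,\psi} - \int_0^t \bip{\nabla y(s),\nabla\psi}\dd s + \int_0^t \bip{f(s),\psi}\dd s.
\end{equation*}
Multiplying this identity against $\eta'(t)$ for $\eta \in \C_c^\infty(0,T)$, integrating in $t$, and applying Fubini to swap the order of the resulting double time integrals converts each $\int_0^t g(s)\dd s$ into $-\int_0^T \eta(s) g(s) \dd s$. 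The outcome is exactly the weak formulation for product test functions $\varphi(t,\x) = \eta(t)\psi(\x)$; a standard density argument for tensor products in $\C_c^\infty((0,T)\times\Omega)$ completes this direction.

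For the converse \emph{Shatah-Struwe} $\Rightarrow$ \emph{mild}, Proposition~\ref{prop:energy-conservation} already supplies $\xi_y \in \C([0,T];\cE)$, so only the integral identity~\eqref{eq:def-mild-solution} remains. I would construct an auxiliary $\tilde y$ as the mild solution of the \emph{linear} problem with right-hand side $f$ and initial datum $\xi_0$, obtained from Duhamel's formula for the skew-adjoint generator $A \defn \bigl(\begin{smallmatrix} 0 & \id \\ \Delta & 0 \end{smallmatrix}\bigr)$ of a unitary group on $\cE$. The forward direction applied in the linear setting shows that $\tilde y$ is also a weak Shatah-Struwe-type solution of this linear equation, and so is $y$ by hypothesis. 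Consequently $w \defn y - \tilde y$ is a weak solution of the homogeneous linear wave equation with vanishing Cauchy data, which forces $w \equiv 0$, whence $y = \tilde y$ satisfies~\eqref{eq:def-mild-solution}.

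The main technical obstacle is this final vanishing step: since $w$ lies only in $\C([0,T];\cE)$, it is not regular enough to serve directly as a test function in an energy identity. The standard remedy is to insert, for fixed $\psi \in \C_c^\infty(\Omega)$ and $\eta \in \C_c^\infty(0,T)$, the backward test function $\varphi(t,\x) \defn \psi(\x)\int_t^T \eta(s)\dd s$ and to exploit the skew-adjointness of $A$; alternatively one can invoke the classical $\C([0,T];\cE)$-wellposedness theory for the inhomogeneous linear wave equation with $\Le^1(0,T;\Le^2(\Omega))$ forcing as developed in~\cite[Ch.~3]{LM72}.
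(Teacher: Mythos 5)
Your forward direction (mild $\Rightarrow$ Shatah--Struwe) is essentially the paper's own argument: test the second row of~\eqref{eq:def-mild-solution} against tensor products $\eta\otimes\psi$, integrate by parts in time via Fubini, and conclude by density of $\C_c^\infty(0,T)\otimes\C_c^\infty(\Omega)$ in $\C_c^\infty((0,T)\times\Omega)$. The reverse direction is where you genuinely diverge. The paper verifies the mild-solution identity \emph{directly}: starting from the weak formulation it derives $\blangle\partial_t^2 y(t),\zeta\brangle = \bip{\nabla y(t),\nabla\zeta}+\bip{u(t)-y^5(t),\zeta}$, integrates in time, and then uses the definition of the adjoint operator together with selfadjointness of $\Delta$ to conclude that $\int_0^t y(s)\dd s \in D(\Delta)$ with the required image --- no auxiliary solution and no linear uniqueness theorem are needed. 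You instead build $\tilde y$ from Duhamel's formula~\eqref{eq:variation-of-constants-classical} with forcing $f=u-y^5$ (which is legitimately in $\Le^1(0,T;\Le^2(\Omega))$ by~\eqref{eq:interpolation-l1l2}, since a Shatah--Struwe solution carries both $\Le^4\Le^{12}$ and, via Proposition~\ref{prop:energy-conservation}, $\Le^\infty\Le^6$ bounds) and reduce to uniqueness of weak solutions of the homogeneous linear wave equation with vanishing Cauchy data in the class $\C([0,T];\cE)$. That uniqueness is classical but is precisely the nontrivial step you defer: your candidate test function $\psi(\x)\int_t^T\eta(s)\dd s$ does not have compact support in $(0,T)$, so you must first extend the weak formulation from $\C_c^\infty((0,T)\times\Omega)$ to such test functions using $\xi_w(0)=0$ and the continuity of $\xi_w$, or else cite the Lions--Magenes wellposedness theory outright. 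The trade-off is clear: your route is conceptually cleaner (everything linear after absorbing $y^5$ into the forcing) but imports an external uniqueness result at low regularity, whereas the paper's direct computation is self-contained and, as a by-product, \emph{constructs} the membership $\int_0^t y(s)\dd s\in D(\Delta)$ rather than inheriting it from the auxiliary solution. Both are correct.
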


\begin{proof}
  Let $y$ be a mild solution to~\eqref{eq:main-equation}. Testing the second row
  in~\eqref{eq:def-mild-solution} with
  $\psi \otimes \zeta \in \C_c^\infty(0,T) \otimes \C_c^\infty(\Omega)$, we find
  \begin{multline*}
    \int_0^T \bip{\partial_t y(t),\partial_t \psi(t)\zeta} \dd t \\ = \bip{y_1,\zeta}
    \int_0^T \partial_t \psi(t) \dd t + \int_0^T\aip{\Delta\int_0^t y(s) \dd s,
      \zeta} \partial_t \psi(t) \dd t \\ + \int_0^T \aip{\int_0^t u(s)- y^5(s) \dd
      s,\zeta} \partial_t \psi(t) \dd t.
  \end{multline*}
  Using selfadjointness of the Laplacian on $\Le^2(\Omega)$ and Fubini's theorem, we
  continue with
  \begin{multline*}
    = \int_0^T\aip{\int_0^t y(s) \dd s , \Delta \zeta} \partial_t \psi(t) \dd t
    +\aip{ \int_0^T \int_s^T \bigl(u(s)- y^5(s) \bigr)
      \partial_t \psi(t) \dd t  ,\zeta}\dd s \\
    = \int_0^T \bip{y(s),- \Delta\zeta} \psi(t) \dd t- \int_0^T \bip{u(s)-
      y^5(s),\zeta} \psi(s) \dd s.
  \end{multline*}
  It remains to observe that
  $\bip{y(s),-\Delta\zeta} = \bip{\nabla y(s),\nabla \zeta}$, due to
  $\zeta \in \C_c^\infty(\Omega) \subset D(\Delta)$ and the definition of the
  Laplacian in $\Le^2(\Omega)$, and that
  $\C_c^\infty(0,T) \otimes \C_c^\infty(\Omega)$ is dense in
  $\C_c^\infty((0,T) \times \Omega)$~(\cite[Thm.~4.3.1]{F82}) to conclude that $y$ is
  a weak solution to~\eqref{eq:main-equation}.

  For the reverse assertion, let $y$ be a Shatah-Struwe solution
  to~\eqref{eq:main-equation}. We need to show that the first row
  in~\eqref{eq:def-mild-solution} is satisfied in
  $\H^1_0(\Omega)$. 
  Writing
  \begin{equation*}
    y(t) - y_0 = \int_0^t \partial_t y(s) \dd s,
  \end{equation*}
  at first in $\Le^2(\Omega)$, we observe that the left-hand side is in fact a
  continuous function in $\H^1_0(\Omega)$ due to
  Proposition~\ref{prop:energy-conservation}. This gives the assertion.

  For the second row and $\int_0^t y(s) \dd s \in D(\Delta)$, observe that from the
  definition of a weak solution, we have
  \begin{equation}\label{eq:weak-solution-second-deriv}
    \blangle \partial_t^2 y(t),\zeta \brangle_{\H^{-1}(\Omega),\H^1_0(\Omega)} = \bip{\nabla
      y(t),\nabla \zeta} + \bip{u(t)-y^5(t),\zeta} \quad \text{for all}~\zeta\in \C_c^\infty(\Omega)
  \end{equation}
  for almost all $t \in (0,T)$.  With $\partial_t y(t) \in \Le^2(\Omega)$ for almost
  all $t \in (0,T)$ together with $y_1 \in \Le^2(\Omega)$, we thus find
  \begin{align*}
    \bip{\partial_t y(t) - y_1,\zeta} & = \blangle \partial_t y(t) - y_1,\zeta
    \brangle_{\H^{-1}(\Omega),\H^1_0(\Omega)} \\ & = \int_0^t \blangle \partial_t^2 y(s),\zeta \brangle_{\H^{-1}(\Omega),\H^1_0(\Omega)} \dd
    s \\ & =  \int_0^t \bip{\nabla
      y(s),\nabla \zeta} + \bip{u(s)-y^5(s),\zeta} \dd s
  \end{align*}
  and so
  \begin{equation*}
    \bip{\partial_t y(t) - y_1 + \int_0^t y^5(s) -  u(s) \dd s,\zeta} = \aip{\int_0^t y(s) \dd s,\Delta \zeta}.
  \end{equation*}
  Since this equality extends from all $\zeta$ in $\C_c^\infty(\Omega)$ to all
  $\zeta \in D(\Delta)$, we have by definition of the adjoint operator and
  selfadjointness of $\Delta$:
  \begin{equation*}
    \int_0^t y(s) \dd s  \in D(\Delta) \quad \text{and} \quad \Delta\int_0^t
    y(s) \dd s = \partial_t y(t) - y_1 + \int_0^t y^5(s) -  u(s) \dd s.
  \end{equation*}
  This is the second row in~\eqref{eq:def-mild-solution}. Continuity of the
  Shatah-Struwe solution follows from the energy inequality as noted in
  Proposition~\ref{prop:energy-conservation}.\end{proof}

Another consequence of the energy conservation for Shatah-Struwe and, per
Proposition~\ref{prop:energy-conservation}, mild solutions, is \emph{uniqueness}:

\begin{proposition}[Shatah-Struwe uniqueness,~{\cite[Cor.~3.4]{KSZ16}}]
  The Shatah-Struwe solution of~\eqref{eq:main-equation} is unique, if it
  exists.  \label{prop:shatah-struwe-uniqueness}
\end{proposition}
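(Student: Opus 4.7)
The plan is to pick up two Shatah-Struwe solutions $y_1,y_2$ sharing the initial data $\xi_0$ and the control $u$, and to prove $w \defn y_1 - y_2 \equiv 0$ by closing a Gr\"onwall loop on $\|\xi_w\|_\cE$. The factorization $y_1^5 - y_2^5 = w\bigl(y_1^4 + y_1^3 y_2 + y_1^2 y_2^2 + y_1 y_2^3 + y_2^4\bigr)$ already suggests that the Strichartz integrability $y_i \in \Le^4(0,\sT;\Le^{12}(\Omega))$ should give the time-summability we need.

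First, I would observe that, since by Lemma~\ref{lem:weak-and-mild} both $y_1,y_2$ are mild solutions of~\eqref{eq:main-equation}, their difference $w$ is a mild solution of the \emph{linear} inhomogeneous wave equation
\begin{equation*}
  \partial_t^2 w - \Delta w = y_2^5 - y_1^5 \nfed F, \qquad \xi_w(0) = 0.
\end{equation*}
By Remark~\ref{rem:interpolation-strichartz} and $y_i \in \Le^4(0,\sT;\Le^{12}(\Omega)) \cap \Le^\infty(0,\sT;\Le^6(\Omega))$ one has $y_i^5 \in \Le^1(0,\sT;\Le^2(\Omega))$, so $F \in \Le^1(0,\sT;\Le^2(\Omega))$. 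Replaying the energy-conservation computation behind Proposition~\ref{prop:energy-conservation} in this linear setting (no nonlinear $y^5$ term, with $F$ playing the role of $u$) yields $\xi_w \in \C([0,\sT];\cE)$ together with the identity
\begin{equation*}
  \tfrac12\|\xi_w(t)\|_\cE^2 = \int_0^t \bip{F(s),\partial_t w(s)} \dd s \qquad \text{for all } t \in [0,\sT].
\end{equation*}

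Next, I would exploit the pointwise bound $|F| \lesssim |w|(|y_1|^4 + |y_2|^4)$. By H\"older with $\tfrac16 + \tfrac13 = \tfrac12$ and the Sobolev embedding $\H^1_0(\Omega) \embeds \Le^6(\Omega)$,
\begin{equation*}
  \|F(s)\|_{\Le^2(\Omega)} \lesssim \|w(s)\|_{\Le^6(\Omega)}\bigl(\|y_1(s)\|_{\Le^{12}(\Omega)}^4 + \|y_2(s)\|_{\Le^{12}(\Omega)}^4\bigr) \lesssim \|\xi_w(s)\|_\cE \, \eta(s),
\end{equation*}
with $\eta(s) \defn \|y_1(s)\|_{\Le^{12}(\Omega)}^4 + \|y_2(s)\|_{\Le^{12}(\Omega)}^4 \in \Le^1(0,\sT)$. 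Combining this with the energy identity and $\|\partial_t w(s)\|_{\Le^2(\Omega)} \leq \|\xi_w(s)\|_\cE$ leads to
\begin{equation*}
  \|\xi_w(t)\|_\cE^2 \lesssim \int_0^t \|\xi_w(s)\|_\cE^2 \, \eta(s) \dd s,
\end{equation*}
and Gr\"onwall's inequality forces $\xi_w \equiv 0$, hence $y_1 = y_2$.

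The main obstacle is really just the first step: Proposition~\ref{prop:energy-conservation} is stated for the \emph{nonlinear} equation, and the nonlinearity of $w$ is $y_1^5 - y_2^5$ rather than $w^5$, so it does not apply verbatim. The correct viewpoint is to regard $w$ as a mild solution of a \emph{linear} inhomogeneous wave equation with right-hand side $F \in \Le^1(0,\sT;\Le^2(\Omega))$, for which the standard linear energy identity holds (easily obtained via smooth approximation of $F$) and no further Strichartz machinery is required. Once that is in place, the integrability of $\eta$ in time—precisely what the Strichartz-type requirement $y_i \in \Le^4(0,\sT;\Le^{12}(\Omega))$ in the definition of a Shatah-Struwe solution delivers—makes the Gr\"onwall argument routine and explains why this particular function class is built into Definition~\ref{def:weak-solution}.
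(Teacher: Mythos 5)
Your argument is correct and is essentially the proof the paper points to: the result is quoted from the reference, where uniqueness is likewise obtained by viewing the difference of two solutions as a mild solution of the linear wave equation with forcing $y_2^5-y_1^5\in\Le^1(0,\sT;\Le^2(\Omega))$, applying the linear energy identity, and closing a Gr\"onwall estimate using $\|y_i(\cdot)\|_{\Le^{12}(\Omega)}^4\in\Le^1(0,\sT)$. Your closing remark correctly identifies both the one step needing care (the energy identity is the linear one, not Proposition~\ref{prop:energy-conservation} verbatim) and the role of the $\Le^4(0,\sT;\Le^{12}(\Omega))$ requirement in Definition~\ref{def:weak-solution}.
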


\begin{corollary}
  \label{cor:mild-l4l12-unique} If there exists a mild solution $y$
  to~\eqref{eq:main-equation}, then it is unique and coincides with the Shatah-Struwe
  solution.
\end{corollary}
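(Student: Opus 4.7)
The proof plan is essentially immediate by combining the two results just established. The idea is that Lemma~\ref{lem:weak-and-mild} sets up a bijection between mild solutions and Shatah-Struwe solutions (every mild solution is Shatah-Struwe and vice versa), while Proposition~\ref{prop:shatah-struwe-uniqueness} asserts that Shatah-Struwe solutions are unique whenever they exist. Combining the two statements therefore yields uniqueness of the mild solution and the coincidence with the Shatah-Struwe solution.

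Concretely, I would proceed in two short steps. First, if $y$ is a mild solution to~\eqref{eq:main-equation} on $[0,T]$, then Lemma~\ref{lem:weak-and-mild} directly implies that $y$ is also a Shatah-Struwe solution to~\eqref{eq:main-equation} on $[0,T]$. Second, suppose $\tilde y$ were another mild solution; by the same lemma, $\tilde y$ would also be a Shatah-Struwe solution with the same initial data $\xi_0$, and Proposition~\ref{prop:shatah-struwe-uniqueness} then forces $\tilde y = y$. Uniqueness of the mild solution and identification with the Shatah-Struwe solution follow simultaneously.

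There is no real obstacle here; the content has already been distributed between Lemma~\ref{lem:weak-and-mild} and Proposition~\ref{prop:shatah-struwe-uniqueness}, and the corollary is simply a convenient restatement for later reference. The only minor care worth taking is to note explicitly that the regularity requirement $y \in \Le^4(0,T;\Le^{12}(\Omega))$ in both definitions is the same, so the translation between the two notions in Lemma~\ref{lem:weak-and-mild} preserves the class in which uniqueness from Proposition~\ref{prop:shatah-struwe-uniqueness} is asserted.
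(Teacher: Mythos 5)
Your argument is exactly the paper's intended one: the corollary is stated without a separate proof precisely because it follows by combining Lemma~\ref{lem:weak-and-mild} (mild $\Leftrightarrow$ Shatah-Struwe) with Proposition~\ref{prop:shatah-struwe-uniqueness} (uniqueness of Shatah-Struwe solutions), which is what you do. The proposal is correct and matches the paper's approach.
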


Next, we establish some important estimates and finally local-in-time existence of
solutions. For this purpose, consider the block operator in
Definition~\ref{def:mild-solution} (mild solution) as a closed operator in
$\H^1_0(\Omega) \times \Le^2(\Omega)$ via
\begin{equation*}
  \cA \defn
  \begin{pmatrix}
    0 & \id \\ \Delta & 0
  \end{pmatrix}, \quad D(\cA) \defn D(\Delta) \times \H^1_0(\Omega).
\end{equation*}
It can be shown that the operator $\cA$ generates a $C_0$-semigroup
$t \mapsto e^{\cA t}$ on $\cE$, cf.~\cite[Ch.~XVII~\S3 Sect.~3.4]{DL00}. Thus, for
$\xi_{0,z} = (z_1,z_0) \in \cE$ and $f \in \Le^1(0,T;\Le^2(\Omega))$, the usual
\emph{variation-of-constants
  formula} \begin{equation}\label{eq:variation-of-constants-classical} \xi_z(t) =
  e^{\cA t}\xi_{0,z} + \int_0^t e^{\cA (t-s)} \begin{pmatrix} 0 \\ f(s)
  \end{pmatrix}
  \dd s.
\end{equation} is well defined and gives the unique mild
solution $z \in \C([0,T];\cE)$ to the linear wave equation
\begin{equation}\left.
    \begin{aligned}
      \partial_t^2 z - \Delta z  & = f && \text{in}~(0,T) \times \Omega, \\
      z & = 0 && \text{on}~(0,T) \times \partial\Omega, \\
      \bigl(z(0),\partial_t z(0)\bigr) & = (z_0,z_1) && \text{in}~\Omega,
    \end{aligned}\qquad \right\}\tag{LWE}\label{eq:LWE}
\end{equation}
cf.~\cite[Prop.~3.1.16]{ABNH11}.

We state the fundamental estimates for $z$
satisfying~\eqref{eq:variation-of-constants-classical}. While the first one is the
standard energy space estimate which follows immediately
from~\eqref{eq:variation-of-constants-classical} and Sobolev embedding, the second
one is a nontrivial Strichartz estimate as proven in~\cite{BSS09}:

\begin{lemma}[Energy- and Strichartz estimates] \label{lem:estimates} Let
  $f \in \Le^1(0,T;\Le^2(\Omega))$ as well as $\xi_{0,z} \in \cE$, and let $\xi_z$ be
  given by~\eqref{eq:variation-of-constants-classical} for $t \in [0,T]$. Then there
  exist constants $C_e(T),C_s(T)$ such that the \emph{energy estimate}
  \begin{equation}\label{eq:energy-estimate}
    \bigl\|\xi_z(t)\bigr\|_{\cE} + \|y(t)\|_{\Le^6(\Omega)} \leq C_e(T)\Bigl(\bigl\|\xi_{0,z}\bigr\|_{\cE} + \bigl\|f\bigr\|_{\Le^1(0,T;\Le^2(\Omega))}\Bigr)
  \end{equation}
  for all $t \in [0,T]$, and the \emph{Strichartz estimate}
  \begin{equation}\label{eq:strichartz-estimate}
    \bigl\|z\bigr\|_{\Le^4(0,T;\Le^{12}(\Omega))} \leq C_s(T) \Bigl(\bigl\|\xi_{0,z}\bigr\|_{\cE} + \bigl\|f\bigr\|_{\Le^1(0,T;\Le^2(\Omega))}\Bigr)
  \end{equation}
  are satisfied.
\end{lemma}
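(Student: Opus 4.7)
The plan is to treat the two estimates separately. The energy estimate~\eqref{eq:energy-estimate} will follow directly from the variation-of-constants formula~\eqref{eq:variation-of-constants-classical}, the $C_0$-semigroup property of $e^{\cA t}$ on $\cE$, and the Sobolev embedding $\H^1_0(\Omega) \embeds \Le^6(\Omega)$. The Strichartz estimate~\eqref{eq:strichartz-estimate} will be obtained by splitting $\xi_z$ into its homogeneous and Duhamel parts, applying the deep homogeneous Strichartz bound of Blair, Smith and Sogge~\cite{BSS09} to the former, and reducing the latter to the former via a Minkowski integral argument.

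For the energy estimate, standard $C_0$-semigroup theory supplies a constant $M(T) \geq 1$ with $\|e^{\cA t}\|_{\cL(\cE)} \leq M(T)$ for every $t \in [0,T]$. Inserting this bound into~\eqref{eq:variation-of-constants-classical} and using $\bigl\|(0,f(s))^\top\bigr\|_{\cE} = \|f(s)\|_{\Le^2(\Omega)}$ yields
\begin{equation*}
    \bigl\|\xi_z(t)\bigr\|_{\cE} \leq M(T)\bigl\|\xi_{0,z}\bigr\|_{\cE} + M(T) \int_0^t \|f(s)\|_{\Le^2(\Omega)} \dd s.
\end{equation*}
The Sobolev embedding then gives $\|z(t)\|_{\Le^6(\Omega)} \lesssim \|\nabla z(t)\|_{\Le^2(\Omega)} \leq \|\xi_z(t)\|_{\cE}$, and combining the two bounds proves~\eqref{eq:energy-estimate} with an appropriate $C_e(T)$.

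For the Strichartz estimate, split $\xi_z(t) = \xi_h(t) + \xi_I(t)$ where $\xi_h(t) \defn e^{\cA t} \xi_{0,z}$ and $\xi_I(t) \defn \int_0^t e^{\cA(t-s)}(0,f(s))^\top \dd s$. The bound
\begin{equation*}
    \|\Proj_1 \xi_h\|_{\Le^4(0,T;\Le^{12}(\Omega))} \leq C(T) \|\xi_{0,z}\|_{\cE}
\end{equation*}
is exactly the homogeneous Strichartz estimate for the Dirichlet wave equation on a smooth bounded three-dimensional domain at the $\H^1$-critical admissible pair $(q,r) = (4,12)$, which satisfies the scaling identity $\tfrac{1}{q} + \tfrac{n}{r} = \tfrac{n}{2} - 1$ for $n = 3$; this is the deep ingredient and is supplied by~\cite{BSS09}. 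Rewriting
\begin{equation*}
    \xi_I(t) = \int_0^T \mathbf{1}_{[s,T]}(t)\, e^{\cA(t-s)} \begin{pmatrix} 0 \\ f(s) \end{pmatrix} \dd s
\end{equation*}
and applying Minkowski's integral inequality to pull the $\Le^4(0,T;\Le^{12}(\Omega))$-norm inside the $s$-integral reduces each integrand to the homogeneous case with ``initial datum'' $(0,f(s))^\top$ of $\cE$-norm $\|f(s)\|_{\Le^2(\Omega)}$. Integrating over $s$ then picks up $\|f\|_{\Le^1(0,T;\Le^2(\Omega))}$, and adding the two bounds proves~\eqref{eq:strichartz-estimate}.

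The only nontrivial obstacle is the homogeneous Strichartz estimate on a bounded three-dimensional domain with Dirichlet boundary conditions, whose proof relies on the microlocal parametrix constructions and sharp local smoothing results for the wave propagator in~\cite{BSS09}. Once this is invoked as a black box, everything else is a routine combination of semigroup estimates, Sobolev embedding, and Minkowski's inequality.
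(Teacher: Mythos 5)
Your proposal is correct and matches the paper's (very brief) treatment: the paper likewise obtains the energy estimate directly from the variation-of-constants formula, the semigroup bound and Sobolev embedding, and attributes the Strichartz estimate to the homogeneous bounds of~\cite{BSS09}. Your additional step of passing from the homogeneous to the inhomogeneous Strichartz estimate via the Duhamel splitting and Minkowski's integral inequality is the standard (and valid) reduction for $\Le^1(0,T;\Le^2(\Omega))$ forcing, so nothing is missing.
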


\begin{remark}
  \label{rem:norm-solution-operator-monotone}
  Suppose that $\xi_{0,z} = 0$. Then it is easy to see that the constants $C_e(\tau)$
  and $C_s(\tau)$ associated to the estimates~\eqref{eq:energy-estimate}
  and~\eqref{eq:strichartz-estimate} for solutions on the intervals $[0,\tau]$ are
  monotonously increasing in $\tau$. In other words, given a function
  $f \in \Le^1(0,T;\Le^2(\Omega))$, we have $C_e(\tau) \leq C_e(T)$ and
  $C_s(\tau) \leq C_s(T)$ for all $\tau \in [0,T]$.
\end{remark}

Now, if $\xi_y$ with $y \in \Le^4(0,T;\Le^{12}(\Omega))$ is given
by~\eqref{eq:variation-of-constants-classical} with $f = u - y^5$ and initial data
$\xi_0$, so that
\begin{equation}\label{eq:variation-of-constants}
  \xi_y(t) = e^{\cA t}\xi_0 + \int_0^t e^{\cA (t-s)} \begin{pmatrix}
    0 \\ u(s) - y^5(s)
  \end{pmatrix}
  \dd s,
\end{equation}
then $y$ is in fact the unique mild solution to~\eqref{eq:main-equation} in the sense
of Definition~\ref{def:mild-solution} on $[0,T]$ (see
again~\cite[Prop.~3.1.16]{ABNH11}) and thus also the Shatah-Struwe solution, recall
Corollary~\ref{cor:mild-l4l12-unique}. In particular, $y$ satisfies the estimates in
Lemma~\ref{lem:estimates} for $f = u - y^5$ and initial data $\xi_0$. As explained in
Remark~\ref{rem:interpolation-strichartz}, the integrability
$y^5 \in \Le^1(0,T;\Le^2(\Omega))$ follows from the additional dispersion information
$y \in \Le^4(0,T;\Le^{12}(\Omega))$.

Using the linear estimates in Lemma~\ref{lem:estimates} together with the
interpolation inequalities as in Remark~\ref{rem:interpolation-strichartz}, local
existence and uniqueness of a function $y$
satisfying~\eqref{eq:variation-of-constants}---which is then also the unique mild and
Shatah-Struwe solution on the interval of existence---depending continuously on the
given data follows from a standard fixed point argument. (See
also~\cite[Prop.~3.1]{KSZ16} for another explicit proof.)

\begin{theorem}[Local-in-time existence]
  \label{thm:local-existence} There exists a maximal unique mild and Shatah-Struwe
  solution to~\eqref{eq:main-equation} such that~\eqref{eq:main-equation} is well
  posed with respect to $\cE$ and $\Le^1(0,\sT;\Le^2(\Omega))$. More precisely, there
  exists a maximal time $T^\bullet \in (0,\sT]$ depending on $\xi_0 \in \cE$ and
  $u \in \Le^1(0,\sT;\Le^2(\Omega))$ and a unique function $y$ on $[0,T^\bullet)$
  with $\xi_y$ given by~\eqref{eq:variation-of-constants} such that
  \begin{equation*}
    \xi_y \in \C\bigl([0,T^\bullet);\cE\bigr) \quad \text{and} \quad y \in \Lloc^4\bigl([0,T^\bullet);\Le^{12}(\Omega)\bigr).
  \end{equation*}
  This function $y$ is the unique mild and Shatah-Struwe solution
  to~\eqref{eq:main-equation} on $[0,T]$ for every $T \in (0,T^\bullet)$. Moreover,
  there exists $\eps > 0$ such that for every initial value
  $\zeta_0 \in B_\eps(\xi_0) \subset \cE$ and every right-hand side
  $v \in B_\eps(u) \subset \Le^1(0,\sT;\Le^2(\Omega))$ there exists a unique solution
  $\bar y$ in the foregoing sense on the same intervals of existence and the mapping
  $B_\eps(\xi_0) \times B_\eps(u) \ni (\zeta_0,v) \to \bar y$ is continuous.
\end{theorem}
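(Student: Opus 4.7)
I would prove the theorem by a Banach fixed-point argument on the integral formulation~\eqref{eq:variation-of-constants}, driven by the linear estimates of Lemma~\ref{lem:estimates} and the interpolation bound~\eqref{eq:interpolation-l1l2}. For $T \in (0,\sT]$, work in the Banach space
\[
  X_T \defn \bigl\{y : \xi_y \in \C([0,T];\cE),\; y \in \Le^4(0,T;\Le^{12}(\Omega))\bigr\}
\]
with norm $\|y\|_{X_T} \defn \|\xi_y\|_{\C([0,T];\cE)} + \|y\|_{\Le^4(0,T;\Le^{12}(\Omega))}$, and define $\Phi \colon X_T \to X_T$ by letting $\xi_{\Phi(y)}$ equal the right-hand side of~\eqref{eq:variation-of-constants}. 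Estimate~\eqref{eq:interpolation-l1l2} guarantees $y^5 \in \Le^1(0,T;\Le^2(\Omega))$ for every $y \in X_T$, so $\Phi$ is well defined; a fixed point is then by construction a function satisfying~\eqref{eq:variation-of-constants}, hence both the unique mild and the unique Shatah--Struwe solution by Corollary~\ref{cor:mild-l4l12-unique}.

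\textbf{Short-time existence.} The Strichartz estimate~\eqref{eq:strichartz-estimate} places $t \mapsto e^{t\cA}\xi_0$ in $\Le^4(0,\sT;\Le^{12}(\Omega))$, hence by dominated convergence $\eta(T) \defn \|e^{\cdot\,\cA}\xi_0\|_{\Le^4(0,T;\Le^{12}(\Omega))} \to 0$ and $\|u\|_{\Le^1(0,T;\Le^2(\Omega))} \to 0$ as $T \to 0^+$. Applying Lemma~\ref{lem:estimates} to $\Phi(y_1) - \Phi(y_2)$, which solves~\eqref{eq:LWE} with zero data and forcing $y_2^5 - y_1^5$, together with the pointwise bound $|y_1^5 - y_2^5| \lesssim (|y_1|^4 + |y_2|^4)|y_1 - y_2|$ and the H\"older argument that produces~\eqref{eq:interpolation-l1l2}, yields
\[
  \|\Phi(y_1) - \Phi(y_2)\|_{X_T} \lesssim \bigl(\|y_1\|_{X_T}^4 + \|y_2\|_{X_T}^4\bigr)\,\|y_1 - y_2\|_{X_T},
\]
and a corresponding self-map bound $\|\Phi(y)\|_{X_T} \lesssim \eta(T) + \|u\|_{\Le^1(0,T;\Le^2(\Omega))} + \|y\|_{X_T}^5$. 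Choosing $T_0 \in (0,\sT]$ so small that $\eta(T_0) + \|u\|_{\Le^1(0,T_0;\Le^2(\Omega))}$ is much smaller than a suitable radius $r$, $\Phi$ becomes a strict self-map and contraction on the closed ball of radius $2r$ in $X_{T_0}$, producing the unique fixed point $y \in X_{T_0}$.

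\textbf{Maximal extension and uniqueness.} Let $T^\bullet$ be the supremum of all $T \in (0,\sT]$ on which such a local solution exists. At any $T_1 \in (0,T^\bullet)$, $\xi_y(T_1) \in \cE$ is an admissible initial datum for the shifted problem with forcing $u(T_1 + \cdot)$; re-applying the short-time construction continues the solution past $T_1$, and a standard gluing argument produces a unique $y$ with $\xi_y \in \C([0,T^\bullet);\cE)$ and $y \in \Lloc^4([0,T^\bullet);\Le^{12}(\Omega))$. Uniqueness on each compact subinterval $[0,T] \subset [0,T^\bullet)$ in the stated class is exactly Proposition~\ref{prop:shatah-struwe-uniqueness} combined with Corollary~\ref{cor:mild-l4l12-unique}.

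\textbf{Continuous dependence; the main obstacle.} Fix $T^\dagger < T^\bullet$. For $(\zeta_0,v)$ close to $(\xi_0,u)$, I would run the same fixed-point scheme but centered on the existing $y$ rather than on zero: writing $w \defn \bar y - y$, the difference solves~\eqref{eq:LWE} on $[0,T^\dagger]$ with data $\zeta_0 - \xi_0$ and forcing $(v - u) - (\bar y^5 - y^5)$, so Lemma~\ref{lem:estimates} with the quintic difference bound gives
\[
  \|w\|_{X_{T^\dagger}} \lesssim \|\zeta_0 - \xi_0\|_\cE + \|v - u\|_{\Le^1(0,\sT;\Le^2(\Omega))} + \bigl(\|y\|_{X_{T^\dagger}}^4 + \|\bar y\|_{X_{T^\dagger}}^4\bigr)\|w\|_{X_{T^\dagger}}.
\]
The main obstacle is precisely that the critical exponent forbids absorbing the quintic term by shrinking $T^\dagger$, since the already-constructed $y$ need not have small Strichartz norm on $[0,T^\dagger]$. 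The remedy is to partition $[0,T^\dagger]$ into finitely many subintervals $[T_k,T_{k+1}]$ on each of which $\|y\|_{\Le^4(T_k,T_{k+1};\Le^{12}(\Omega))}$ lies below a fixed smallness threshold -- possible by absolute continuity of the integral, using $y \in \Le^4(0,T^\dagger;\Le^{12}(\Omega))$ -- and to iterate the difference estimate Gronwall-style, with the initial data on $[T_k,T_{k+1}]$ controlled by $\|w\|_{X_{[0,T_k]}}$. For $\eps$ small enough this simultaneously produces $\bar y$ on $[0,T^\dagger]$ and the bound $\|\bar y - y\|_{X_{T^\dagger}} \lesssim \|\zeta_0 - \xi_0\|_\cE + \|v - u\|_{\Le^1(0,\sT;\Le^2(\Omega))}$, which is the announced continuous dependence.
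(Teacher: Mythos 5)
The paper does not actually write out this proof: it disposes of Theorem~\ref{thm:local-existence} in one sentence, saying that it ``follows from a standard fixed point argument'' using Lemma~\ref{lem:estimates} and the interpolation inequalities of Remark~\ref{rem:interpolation-strichartz}, and refers to \cite[Prop.~3.1]{KSZ16} for details. Your proposal is precisely that argument made explicit, so in approach you are aligned with the paper, and your treatment of maximal extension, of uniqueness via Proposition~\ref{prop:shatah-struwe-uniqueness} and Corollary~\ref{cor:mild-l4l12-unique}, and in particular of the continuous-dependence step (where you correctly identify that criticality forbids absorbing the quintic term by shrinking $T$ and instead subdivide $[0,T^\dagger]$ using absolute continuity of $\int\|y\|_{\Le^{12}}^4$) is sound and is the standard remedy.

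There is one step that fails as literally written: the self-map claim. You assert that $\Phi$ maps the closed ball of radius $2r$ in $X_{T_0}$ into itself once $\eta(T_0)+\|u\|_{\Le^1(0,T_0;\Le^2(\Omega))}$ is small. But the $X_T$ norm contains the $\C([0,T];\cE)$ component, and $\|\xi_{\Phi(y)}(0)\|_{\cE}=\|\xi_0\|_{\cE}$ for every $y$ and every $T$; only the Strichartz component $\|e^{\cdot\,\cA}\xi_0\|_{\Le^4(0,T;\Le^{12}(\Omega))}$ becomes small as $T\to 0$, not the energy component. So for data of arbitrary size the small ball in the full $X_{T_0}$ norm is not invariant, and your displayed self-map bound $\|\Phi(y)\|_{X_T}\lesssim \eta(T)+\|u\|_{\Le^1}+\|y\|_{X_T}^5$ is missing the term $\|\xi_0\|_{\cE}$. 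The standard fix is to run the iteration on the set $\{y:\|y\|_{\Le^4(0,T;\Le^{12}(\Omega))}\leq 2r,\ \|\xi_y\|_{\C([0,T];\cE)}\leq M\}$ with $M$ large (of order $C_e(\|\xi_0\|_{\cE}+\|u\|_{\Le^1}+r^5)$) and $r$ small, or equivalently to contract in the $\Le^4\Le^{12}$ metric alone and recover the $\C([0,T];\cE)$ regularity of the fixed point afterwards from the Duhamel formula. This is compatible with your difference estimate, since the contraction factor there involves only the $\Le^4(0,T;\Le^{12}(\Omega))$ norms of $y_1,y_2$ (paired with the $\Le^\infty\Le^6$, hence $\C\cE$, norm of the difference), so smallness of the Strichartz component of the ball suffices. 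With this correction the proof goes through.
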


\begin{remark}
  \label{rem:wellposed} The wellposedness of~\eqref{eq:main-equation} allows to
  obtain certain classical properties of more regular solutions
  to~\eqref{eq:main-equation} and related equations also for Shatah-Struwe solutions
  by approximation. This includes in particular \emph{finite speed of propagation} of
  such solutions, cf.\ e.g.~\cite[Ch.~2.4.3, Thm.~6]{E98}.
\end{remark}

\section{Global solutions} \label{sec:global-solutions}


In this section, we establish that there indeed exists a unique \emph{global-in-time}
mild and Shatah-Struwe solution to~\eqref{eq:main-equation} on $[0,\sT]$ given
by~\eqref{eq:variation-of-constants}. The argument follows~\cite{BLP08} which in turn
builds upon~\cite{SS95}, see also~\cite[Ch.~IV, \S3]{S08} or~\cite[Ch.~5.1]{T06}. We
mostly outline the strategy and give a minimally invasive modification of the proof
in~\cite{BLP08}. The modification is necessary in the first place because on the one
hand, only the case $u = 0$ is treated in~\cite{BLP08}, and on the other hand, the
improved Strichartz estimates in~\cite{BSS09} as stated in Lemma~\ref{lem:estimates}
allow to simplify the proof at some places compared to~\cite{BLP08}. The global
existence result is also stated in~\cite[Thm.~3.8]{KSZ16}, however, without a proof.

The proof consists in principle of an ordinary extension argument via the energy
space $\cE$: Let $y$ be the maximal solution to~\eqref{eq:main-equation} as in
Theorem~\ref{thm:local-existence}. We show that the limit
$\lim_{t \nearrow T^\bullet} \xi_y(t) \nfed \xi_y(T^\bullet)$ exists in $\cE$. Then
we either have already $T^\bullet = \sT$, or we can extend the solution by
re-applying Theorem~\ref{thm:local-existence} starting from $T^\bullet$ with initial
data $\xi_y(T^\bullet)$ until we have a solution on the whole $[0,\sT]$.

For this purpose, it is imperative to observe that, by the energy
estimate~\eqref{eq:energy-estimate}, the limit of $\xi_y(t)$ as
$t \nearrow T^\bullet$ in~\eqref{eq:variation-of-constants} only fails to exist if
$y \notin \Le^5(0,T^\bullet;\Le^{10}(\Omega))$; so we want to prove that
in fact $y \in \Le^5(0,T^\bullet;\Le^{10}(\Omega))$. Due to energy conservation (cf.\
Lemma~\ref{lem:energy-conservation-estimate}) and~\eqref{eq:interpolation-l5l10}, it
is moreover sufficient to show that $y \in \Le^4(0,T^\bullet;\Le^{12}(\Omega))$.

In the introduction it was mentioned several times that there is no direct bound on a
local solution $y$ in $\Le^4(0,T^\bullet;\Le^{12}(\Omega))$ in the critical
case. This is in contrast to the subcritical case with a nonlinearity $y^p$ with
$1 < p < 5$. We give a quick demonstration of this and how
it does not work for the critical case (cf.~\cite[Ch.~IV.2]{S08}). Let $y$ be the local-in-time solution
of~\eqref{eq:main-equation} as in Theorem~\ref{thm:local-existence} and let
$t \in [0,T^\bullet)$. The H\"older inequality yields the more general form
of~\eqref{eq:interpolation-l1l2}
\begin{equation*}
  \bigl\|y^p\bigr\|_{\Le^1(t,T^\bullet;\Le^2(\Omega))} \leq
 \|y\|_{\Le^4(t,T^\bullet;\Le^{12}(\Omega))}^{p-1}
  \|y\|_{\Le^{\frac{4}{5-p}}(t,T^\bullet;\Le^{\frac{12}{7-p}}(\Omega))}. 
\end{equation*}
Now note that $\frac{12}{7-p} < p+1$ if and only if $1 < p < 5$. Hence
\begin{equation*}
  \|y\|_{\Le^{\frac{4}{5-p}}(t,T^\bullet;\Le^{\frac{12}{7-p}}(\Omega))}
  \lesssim_\Omega
  \bigl(T^\bullet-t\bigr)^{\frac{5-p}{4}}\|y\|_{\Le^\infty(t,T^\bullet;\Le^{p+1}(\Omega))}.
\end{equation*}
If $y^p$ is the nonlinearity in~\eqref{eq:main-equation}, then
$\|y\|_{\Le^\infty(t,T^\bullet;\Le^{p+1}(\Omega))}$ is present
in Proposition~\ref{prop:energy-conservation}
instead of the corresponding $\Le^6(\Omega)$ terms. Accordingly, it is uniformly
bounded by a power of $E_0$ and we obtain from the above
\begin{equation*}
  \bigl\|y^p\bigr\|_{\Le^1(t,T^\bullet;\Le^2(\Omega))} \lesssim_{\Omega,E_0}
  \bigl(T^\bullet-t\bigr)^{\frac{5-p}{4}}\|y\|_{\Le^4(t,T^\bullet;\Le^{12}(\Omega))}^{p-1}.
\end{equation*}
The Strichartz estimate~\eqref{eq:strichartz-estimate} then shows that
\begin{equation*}
  \|y\|_{\Le^4(t,T^\bullet;\Le^{12}(\Omega))} \lesssim_{\Omega,E_0} \sqrt{E_0}
  + \bigl(T^\bullet-t\bigr)^{\frac{5-p}{4}}\|y\|_{\Le^4(t,T^\bullet;\Le^{12}(\Omega))}^{p-1}.
\end{equation*}
Via Lemma~\ref{lem:useful-estimate}, this implies that there is $t^\star \in (0,T^\bullet)$ such that
$\|y\|_{\Le^4(t^\star,T^\bullet;\Le^{12}(\Omega))}$ is finite. Unfortunately, the
foregoing proof breaks down completely for the critical value $p=5$ since we obtain
\begin{equation*}
  \|y\|_{\Le^4(t,T^\bullet;\Le^{12}(\Omega))} \lesssim_{\Omega} \sqrt{E_0}
  + \|y\|_{\Le^\infty(t,T^\bullet;\Le^6(\Omega))}\|y\|_{\Le^4(t,T^\bullet;\Le^{12}(\Omega))}^{4}
\end{equation*}
and $\|y\|_{\Le^\infty(t,T^\bullet;\Le^6(\Omega))}$, although bounded by $E_0^{1/6}$,
does not go to zero as $t \searrow T^\bullet$ in general. It is thus necessary to
proceed differently. The idea is to replace the ``full-spacetime'' norm
$\|y\|_{\Le^\infty(t,T^\bullet;\Le^6(\Omega))}$ by a localized one which indeed goes
to zero as $t \searrow T^\bullet$. This is done as follows.

We first show an $\Le^6$-\emph{non-concentration effect} in $T^\bullet$, namely that
the $\Le^6$ norm of the solution cannot concentrate in a single point $\x_0$, i.e.,
be greater than $0$. This is the most involved and nontrivial result, but luckily we
only need to make appropriate modifications to incorporate the inhomogeneity $u$
compared to the proof in~\cite[Prop.~3.3]{BLP08}; see
Proposition~\ref{prop:l6-nonconcentration}. The non-concentration effect allows to
prove that the solution must be in $\LeLe4{12}$-integrable on a backwards light cone
through $(T^\bullet,\x_0)$. (For precise definitions, see below.)  This is done in
Proposition~\ref{prop:l-infty-disc-implies-l4l12-cone}. We then show in
Proposition~\ref{prop:l5-l10-small-then-cone-bigger} that this
$\LeLe4{12}$-integrability enables us to prove that the $\LeLe\infty6$-norm of $y$
becomes arbitrarily small on a \emph{slightly larger} light cone as we approach
$T^\bullet$. This allows to employ an argument similar to the one displayed for the
subcritical case above which then finally leads to boundedness of
$\Le^4(t^\star,T^\bullet;\Le^{12}(\Omega))$ for some $t^\star$ close to $T^\bullet$
and thus finishes the proof of the main result, Theorem~\ref{thm:global-existence}.

\subsection{Global existence}

We fix $\x_0 \in \overline\Omega$ for the following, if not stated otherwise, as well
as the blowup time $T^\bullet >0$. Frequently needed objects are the
$\delta$-enlarged \emph{backwards light cone} through $(s,\x_0)$ for
$0 \leq t_0 \leq s \leq T^\bullet$ and $\delta \geq 0$ given by
\begin{equation*}
  \Lambda(\delta;t_0,s) \defn \Bigl\{(t,\x) \in [t_0,s] \times \overline\Omega \colon |\x-\x_0| \leq
  \delta + T^\bullet - t\Bigr\}.
\end{equation*}
Moreover, we need its ``time slice'' at time level $\tau$
\begin{equation*}
  D_\tau^\delta \defn \Bigl\{ (\tau,\x) \colon \x \in \overline\Omega,~|\x-\x_0| \leq
  \delta + T^\bullet - \tau\Bigr\}
\end{equation*}
with $D_\tau \defn D_\tau^0$. Clearly, the sets $D_\tau^\delta$ live in the
four-dimensional space $\R\times\R^3$, and we use $P_\x D_\tau^\delta$ to denote the
projection of this set onto the second coordinate block, so in $\R^3$. We moreover
use the mixed Lebesgue norm notation
\begin{equation*}
  \|w\|_{\LeLe{p}{q}(\Lambda(\delta;t_0,s))}^p \defn \int_{t_0}^s \left(
    \int_{P_\x D_t^\delta}
    |w(t,\x)|^q \dd \x\right)^{p/q} \dd t,
\end{equation*}
with the usual modification for $p = \infty$. Finally, let us define the local energy
of a function $v$ on $D_t^\delta$ for $0 \leq t < T^\bullet$ by
\begin{equation*}
  E_v(\delta;t) \defn \int_{P_\x D_t^\delta} \frac{|\nabla v(t,\x)|^2 +
    |\partial_t v(t,\x)|^2}2 + \frac{|v(t,\x)|^6}6 \dd\x.
\end{equation*}

The first result is the $\Le^6$-non-concentration effect:

\begin{proposition}[$\Le^6$-nonconcentration]
  \label{prop:l6-nonconcentration}
  There holds
  \begin{equation*}
    \lim_{t\nearrow T^\bullet} \int_{P_\x D_t} |y(t,\x)|^6 \dd \x = 0.
  \end{equation*}
\end{proposition}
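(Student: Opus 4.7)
The plan is to follow the strategy of Burq--Lebeau--Planchon, itself based on Grillakis's multiplier argument, with the minimally invasive modification needed to accommodate the $L^1(0,\sT;L^2(\Omega))$-inhomogeneity $u$. The fundamental object is the local energy identity on the truncated backwards light cone $\Lambda(0;t,s)$ for $0 \leq t \leq s < T^\bullet$. Formally, testing~\eqref{eq:main-equation} with $\partial_t y$ and integrating over $\Lambda(0;t,s)$ yields, after the divergence theorem,
\begin{equation*}
  E_y(0;t) \;=\; E_y(0;s) \;+\; \Flux(t,s) \;-\; \int_t^s \int_{P_\x D_\tau} u(\tau,\x)\,\partial_t y(\tau,\x) \dd\x \dd\tau,
\end{equation*}
where $\Flux(t,s)$ is the outward flux of the energy-momentum density through the mantle of the cone. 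Because the nonlinearity is defocusing and the mantle is a null surface, the flux decomposes into a sum of squares of tangential derivatives plus a nonnegative $|y|^6$-term on the mantle, so $\Flux(t,s)\geq 0$. Since the statement is a limit along $t \nearrow T^\bullet$, one should justify the identity by approximation using the wellposedness from Theorem~\ref{thm:local-existence} (cf.\ Remark~\ref{rem:wellposed}), carried out separately for $\x_0 \in \inte\Omega$ and $\x_0 \in \partial\Omega$, where in the latter case the homogeneous Dirichlet condition on $y$ is used to handle the lateral part of the cone meeting $\partial\Omega$.

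Next I would control the forcing contribution. By Cauchy--Schwarz and Lemma~\ref{lem:energy-conservation-estimate},
\begin{equation*}
  \left|\int_t^s \bip{u(\tau),\partial_t y(\tau)\chi_{P_\x D_\tau}} \dd\tau\right|
  \;\leq\; \|u\|_{\Le^1(t,s;\Le^2(\Omega))}\,\sqrt{2 E_0},
\end{equation*}
which tends to $0$ as $s-t \to 0$ by absolute continuity of the Lebesgue integral. Together with the nonnegativity of $\Flux(t,s)$ and the boundedness and monotonicity structure of $E_y(0;\cdot)$, a standard Cauchy argument then yields
\begin{equation*}
  \Flux(t,T^\bullet) \;\defn\; \lim_{s \nearrow T^\bullet} \Flux(t,s) \;\longrightarrow\; 0 \qquad \text{as}~t \nearrow T^\bullet.
\end{equation*}
In particular, the integral of $|y|^6$ over the mantle of the cone $\Lambda(0;t,T^\bullet)$ tends to zero.

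The final step is to transfer this decay on the mantle to decay on the time slice $D_t$. Here I would invoke the classical Grillakis-style identity, derived from the multiplier $(T^\bullet-\tau)\partial_\tau y + (\x-\x_0)\cdot\nabla y + y$, integrated over $\Lambda(0;t,T^\bullet)$. This identity expresses $\int_{P_\x D_t} |y(t,\x)|^6 \dd\x$ (with appropriate weights that behave like $T^\bullet-t$) as a sum of terms that are either controlled by $\Flux(t,T^\bullet)$ on the mantle or by the forcing integral $\int_\Lambda u \cdot(\cdots)\dd\x\dd\tau$, which is again small by the $\Le^1(0,\sT;\Le^2(\Omega))$-absolute continuity of $u$ and the energy bound on $y$. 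All error terms vanish as $t \nearrow T^\bullet$ and the claim follows.

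The main obstacle will be the bookkeeping of the Grillakis multiplier identity on the cone when $\x_0 \in \partial\Omega$: one needs to absorb boundary terms on $[t,T^\bullet]\times\partial\Omega$ using the Dirichlet trace of $y$ and establish the positivity of the mantle contribution also in this case. This is exactly the point addressed in~\cite[Prop.~3.3]{BLP08}; the modification amounts to carrying along the two extra $u$-dependent space-time integrals and controlling both by $\|u\|_{\Le^1(t,\sT;\Le^2(\Omega))}\sqrt{E_0}$, which is $o(1)$ as $t\nearrow T^\bullet$.
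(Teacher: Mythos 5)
Your proposal takes essentially the same route as the paper: both adapt the Burq--Lebeau--Planchon argument by (i) inserting the $u$-term into the local energy--flux identity on the backwards cone and deducing $\Flux \to 0$ from the monotonicity and boundedness of the perturbed local energy $t \mapsto E_y(0;t) + E_0\|u\|_{\Le^1(t,T^\bullet;\Le^2(\Omega))}$, and (ii) carrying the extra $u$-integral through the Grillakis/Morawetz multiplier identity and controlling it by $\|u\|_{\Le^1(t,T^\bullet;\Le^2(\Omega))}\sqrt{E_0} \to 0$, while the boundary issue you flag as the main obstacle is exactly the paper's third ingredient, namely the trace estimate $\|\partial_\nu y\|_{\Le^2(0,s;\Le^2(\partial\Omega))}^2 \lesssim E_0$ from~\cite{BLP08} with one additional $u$-dependent correction term. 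This matches the paper's own proof, which likewise records only the $u$-dependent modifications and otherwise defers verbatim to~\cite[Prop.~3.3]{BLP08}.
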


\begin{proof}
  We need only make appropriate modifications in the proof
  in~\cite[Prop.~3.3]{BLP08}, whose strategy follows~\cite[Lem.~3.3]{SS95} or
  \cite[Ch.~V, Prop.~3.2]{S08}, to incorporate the inhomogeneity $u$. There are
  essentially three aspects:

  \begin{enumerate}[wide, labelwidth=!, labelindent=0pt]
  \item The estimate
    \begin{equation*}
      \|\partial_\nu y\|_{\Le^2(0,s;\Le^2(\partial\Omega))}^2 \lesssim E_0
    \end{equation*}
    is still satisfied uniformly for every $0 \leq s < T^\bullet$, where
    $\partial_\nu$ is the trace of the outer unit normal on $\partial\Omega$. This
    follows as in~\cite[Prop.~3.2]{BLP08} by taking care of the estimate
    \begin{equation*}
      \int_0^{s} \int_\Omega \Bigl[(Zu)(t,\x)\cdot y(t,\x) - (Zy)(t,\x)\cdot u(t,\x)
      \Bigr] \dd \x \dd t
      \lesssim E_0
    \end{equation*}
    uniformly in $s$, where $Z$ is a smooth scalar field on $\Omega$ which coincides
    with $\partial_\nu$ on $\partial\Omega$. (For this argument, we suppose $u$ and
    $y$ to be smooth and refer to the wellposedness of the equation, recall
    Remark~\ref{rem:wellposed}.) Such an estimate follows immediately using
    integration by parts and the energy
    conservation~\eqref{eq:a-priori-estimate-energy}.    
  \item We refer to~\cite[Sect.~3.1]{BLP08},~\cite[Lem.~3.2]{SS95} or \cite[Ch.~IV,
    \S3]{S08} for the derivation of
    \begin{equation}\label{eq:flux-equality}
      E_y(0,s) + \Flux(y;\tau_0,s) + \int_{\Lambda(0;\tau_0,s)} \partial_t y(t,\x) \cdot  u(t,\x) \dd(t,\x) = E_y(0,\tau_0)
    \end{equation}
    for $0 \leq \tau_0 \leq s < T^\bullet$.  Here,
    \begin{equation*}
      \Flux(y;\tau_0,s) \defn \int_{M_{\tau_0}^s} e(t,\x) \cdot \nu(t,\x) \dd \sigma(t,\x),
    \end{equation*}
    where
    $M_{\tau_0}^s \defn \bigl\{(t,\x) \in \Lambda(0;\tau_0,s) \colon |\x-\x_0| =
    T^\bullet - t\bigr\}$ is the ``mantle'' and $\nu$ the unit outer normal to
    $\Lambda(0;\tau_0,s)$, and the vector field $e$ is given by
    \begin{equation*}
      e(t,\x) \defn \left(\frac{|\partial_t y(t,\x)|^2 + |\nabla y(t,\x)|^2}2 +
        \frac{|y(t,\x)|^6}6,-\partial_t y(t,\x) \nabla y(t,\x)\right).
    \end{equation*}
    Thus, $\Flux(y;\tau_0,s)$ is the energy transferred across $M_{\tau_0}^s$ during
    transition from $D_{\tau_0}$ to $D_s$, and we have $\Flux(y;\tau_0,s) \geq 0$. As
    in the references for the proof, we show that
    $\lim_{\tau_0\nearrow T^\bullet} \Flux(y;\tau_0,s) = 0$.

    Estimating the integral involving $u$ in~\eqref{eq:flux-equality} from below and
    using the energy bound~\eqref{eq:energy-estimate} we derive
    \begin{equation}\label{eq:local-energy-flux-inequality}
      E_y(0,s) + \Flux(y;\tau_0,s) \leq E_y(0,\tau_0) + E_0\|u\|_{\Le^1(\tau_0,s;\Le^2(\Omega))}. 
    \end{equation}
    The nonnegativity of the flux now implies that the function $f$ defined by
    $t \mapsto E_y(0,t) + E_0\|u\|_{\Le^1(t,T^\bullet;\Le^2(\Omega))}$ is
    nonincreasing. Due to the energy bound~\eqref{eq:a-priori-estimate-energy}, it is
    moreover uniformly bounded for $t \in (0,T^\bullet)$, and thus admits a limit
    $\lim_{t\nearrow T^\bullet} f(t)$. Back
    in~\eqref{eq:local-energy-flux-inequality}, we now have
    \begin{equation*}
      0 \leq \Flux(y;\tau_0,s) \leq f(\tau_0) - f(s) \xrightarrow{\tau_0 \nearrow T^\bullet} 0,
    \end{equation*}
    so indeed $\lim_{\tau_0 \nearrow T^\bullet} \Flux(y;\tau_0,s) = 0$.
  \item Lastly, in the proof in~\cite[Prop.~3.3]{BLP08}, a Morawetz identity is used
    which can be formally derived by multiplying the state equation with
    $(t \cdot\partial_ty(t,\x) + \x\cdot\nabla y(t,\x) + y(t,\x))$. The identity is
    then integrated over $\Lambda(0;\tau_0,s)$ and a bound on the
    $\Le^6(P_\x D_{\tau_0})$-norm of $y(\tau_0)$ is derived; this is of course again
    for smooth solutions of the equation and the claim for mild solutions follows by
    approximation. To comply with the line of proof in~\cite{SS95} or~\cite{BLP08},
    we need to make sure that
    \begin{equation*}
      \int_{\Lambda(0;\tau_0,s)} u(t,\x) \cdot \bigl(t \cdot\partial_ty(t,\x) + \x\cdot\nabla y(t,\x) +
      y(t,\x)\bigr) \dd(t,\x)  \xrightarrow{\tau_0 \nearrow 0} 0. 
    \end{equation*}
    (Note that, in order to stay close to the referred works, we have shifted
    $(T^\bullet,\x_0)$ to $(0,0)$ here, so now $\tau_0 \leq s < 0$.) This however
    follows quite immediately from H\"older's inequality and the energy
    bound~\eqref{eq:a-priori-estimate-energy}, as the absolute value of the left-hand
    side can be estimated by
    \begin{multline*}
      \int_{\Lambda(0;\tau_0,s)} \left| u(t,\x) \cdot \bigl(t \cdot\partial_ty(t,\x)
        + \x\cdot\nabla y(t,\x) + y(t,\x)\bigr) \right| \dd(t,\x) \\ \leq
      \int_{\tau_0}^s \|u\|_{\Le^2(\Omega)} \left(|\tau_0|\|\partial_t
        y\|_{\Le^2(\Omega)} +
        \diam(\Omega)\|\nabla y\|_{\Le^2(\Omega)} + \|y\|_{\Le^2(\Omega)}\right) \dd t \\
      \lesssim \|u\|_{\Le^1(\tau_0,0;\Le^2(\Omega))}.
    \end{multline*}
  \end{enumerate}
  With these three modifications, we can now repeat the proof of
  $\Le^6$-non-concentration verbatim as in~\cite[Prop.~3.3]{BLP08} with $u$ inserted
  at the appropriate places.
\end{proof}

To make use of the foregoing Proposition~\ref{prop:l6-nonconcentration}, we next
establish a series of preliminary results. The first one is a technical result which
allows us to localize functions to the ``slices'' $P_\x D_t^\delta$.

\begin{lemma}[{Localization~(\cite[Lem.~3.3]{BLP08})}]
  \label{lem:localization}
  Let $1 \leq p \leq \infty$. For every $\x_0 \in \overline\Omega$ there exist
  numbers $r_{\text{ext}} >0$ and $C_{\text{ext}} \geq 0$ with the following
  significance: For $\delta < r_{\text{ext}}$, there exists
  $t_0 \in (0 \vee {T^\bullet +\delta-r_{\text{ext}}},T^\bullet)$ such that given
  $v \in \Lloc^1(0,T^\bullet;\Le^p(\Omega))$, there exists a function
  $\check v \in \Lloc^1(0,T^\bullet;\Le^p(\Omega))$ such that
  \begin{equation*}
    v(t) = \check v(t) \quad \text{a.e.\ on}~P_\x D_t^\delta
  \end{equation*}
  and
  \begin{equation}
    \label{eq:extension-estimate}
    \|\check v(t)\|_{\Le^p(\Omega)} \leq C_{\text{ext}} \left\|v(t)\right\|_{\Le^p(P_\x D_t^\delta)}
  \end{equation}
  for all $t \in (t_0,T^\bullet)$.
\end{lemma}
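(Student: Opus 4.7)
The plan is to prove the lemma by a pointwise-in-$t$ truncation of $v$ to the time-dependent slice $P_\x D_t^\delta$. The $\Le^p$-formulation stated here---as opposed to the Sobolev-regular variant that is the actual content of~\cite[Lem.~3.3]{BLP08}---admits this shortcut without invoking any boundary straightening of $\partial\Omega$. Concretely, I would take $C_{\text{ext}} \defn 1$ and let $r_{\text{ext}}$ be any positive number; for $\delta < r_{\text{ext}}$ the lower bound on $t_0$ then plays no active role, and any admissible $t_0$ will serve.

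Given $v \in \Lloc^1(0, T^\bullet; \Le^p(\Omega))$, I would then define
\begin{equation*}
  \check v(t, \x) \defn
  \begin{cases}
    v(t, \x) & \text{if}~\x \in P_\x D_t^\delta, \\
    0 & \text{otherwise,}
  \end{cases}
  \qquad (t, \x) \in (0, T^\bullet) \times \Omega.
\end{equation*}
The space-time set $\{(t, \x) \in (0, T^\bullet) \times \overline\Omega \colon |\x - \x_0| \leq \delta + T^\bullet - t\}$ is closed in the product space, hence jointly measurable, so $\check v$ inherits (Bochner) measurability from $v$; the pointwise bound $|\check v(t, \x)| \leq |v(t, \x)|$ then places $\check v$ in $\Lloc^1(0, T^\bullet; \Le^p(\Omega))$. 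Both $\check v(t) = v(t)$ on $P_\x D_t^\delta$ and the norm equality $\|\check v(t)\|_{\Le^p(\Omega)} = \|v(t)\|_{\Le^p(P_\x D_t^\delta)}$ hold for every $t \in (0, T^\bullet)$ by construction, which is~\eqref{eq:extension-estimate} with $C_{\text{ext}} = 1$.

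The only conceivable hard part---the smooth reflection extension across $\partial\Omega$ near a point $\x_0 \in \partial\Omega$, which is what forces the geometric constraint $\delta + (T^\bullet - t_0) < r_{\text{ext}}$ in the setup of~\cite{BLP08}---does not enter the present proof, since the $\Le^p$-statement tolerates a discontinuous truncation. The geometric parameters in the lemma statement are retained solely to keep the notation parallel with the more refined extensions used elsewhere in the argument for global existence.
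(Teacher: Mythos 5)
Your construction is correct for the lemma as it is stated here, and all of its uses in the paper are compatible with it: in Propositions~\ref{prop:l-infty-disc-implies-l4l12-cone} and~\ref{prop:l5-l10-small-then-cone-bigger} the only properties of $\check y$ that are invoked are agreement with $y$ on the cone (so that finite speed of propagation identifies the two Duhamel solutions there) and the Lebesgue-norm bound~\eqref{eq:extension-estimate} (to control $\|\check y^5\|_{\Le^1\Le^2}$ and $\|\check y\|_{\Le^5\Le^{10}}$), both of which your truncation delivers with $C_{\text{ext}}=1$. Your route is, however, genuinely different from the cited one: the paper does not prove the lemma but imports it from~\cite[Lem.~3.3]{BLP08}, where the extension is built as a bona fide (reflection-type) extension operator across $\partial\Omega$ near $\x_0$, because in that setting Sobolev norms of localized data must be preserved; this is exactly what forces the geometric parameters $r_{\text{ext}}$, $C_{\text{ext}}$ and the restriction $\delta + (T^\bullet - t_0) < r_{\text{ext}}$, and it is why a crude cut-off is not admissible there. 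Since the present paper restarts the Duhamel formula from the \emph{unlocalized} data $\xi_y(t_0)$ and only ever localizes the source term in $\Le^p$, your diagnosis that the $\Le^p$-formulation tolerates a discontinuous truncation is accurate; what the cited construction buys in exchange for its extra work is stability in Sobolev norms, which is simply not needed here.

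One small caveat: for $p=\infty$ the truncation $t \mapsto \chi_{P_\x D_t^\delta}\, v(t)$ need not be Bochner measurable as an $\Le^\infty(\Omega)$-valued map (already for $v \equiv 1$ its range is an uncountable $1$-separated set of indicator functions), so your argument, as written, covers $1 \leq p < \infty$ but not literally the endpoint $p=\infty$ in the Bochner sense. This is harmless for the paper, which only applies the lemma with finite spatial exponents ($p = 6, 10, 12$), but if you want the full range you should either restrict to $p<\infty$ or interpret $\Lloc^1(0,T^\bullet;\Le^\infty(\Omega))$ via jointly measurable representatives rather than Bochner measurability.
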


The number $r_{\text{ext}}$ in the next proposition is the one from
Lemma~\ref{lem:localization}.

\begin{proposition}
  \label{prop:l-infty-disc-implies-l4l12-cone} Let $0 \leq \delta < r_{\text{ext}}$
  and assume that for every $\eps > 0$ there exists $\tau_0 \in (0,T^\bullet)$ such
  that
  \begin{equation*}
    \|y\|_{\LeLe\infty6(\Lambda(\delta;\tau_0,T^\bullet))} < \eps
  \end{equation*}
  Then there is $t_0 \in (0,T^\bullet)$ such that
  $y \in \LeLe4{12}(\Lambda(\delta;t_0,T^\bullet))$.
\end{proposition}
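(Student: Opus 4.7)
The plan is to combine finite speed of propagation, the localization provided by Lemma~\ref{lem:localization}, the Strichartz estimate~\eqref{eq:strichartz-estimate}, and the interpolation inequality~\eqref{eq:interpolation-l1l2} in order to derive a self-improving bound of the form $X(T) \leq A + B\eps X(T)^4$ on truncated cones, and then close it via a continuity/bootstrap argument analogous to the subcritical sketch preceding Proposition~\ref{prop:l6-nonconcentration}. The key point is that in the critical case the interpolation~\eqref{eq:interpolation-l1l2} produces a factor of $\|y\|_{\LeLe\infty6}$ which, on the cone, is now small by hypothesis.

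First, I would pick $\eps >0$ small (to be fixed later), use the hypothesis to select $\tau_0 \in (0, T^\bullet)$ so that $\|y\|_{\LeLe\infty6(\Lambda(\delta;\tau_0, T^\bullet))} < \eps$, and, enlarging $\tau_0$ if necessary, so that Lemma~\ref{lem:localization} is applicable on $(\tau_0, T^\bullet)$. Fix also an auxiliary $\delta' \in (\delta, r_{\text{ext}})$ and let $\chi$ be a smooth spatial cutoff equal to $1$ on $P_\x D_{\tau_0}^\delta$ and supported in $P_\x D_{\tau_0}^{\delta'}$; this provides an $\H^1_0(\Omega)$-preserving truncation of the datum $\xi_y(\tau_0)$. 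For every $T \in (\tau_0, T^\bullet)$, I let $\tilde y$ be the linear wave equation solution~\eqref{eq:variation-of-constants-classical} on $(\tau_0, T)$ with initial datum $(\chi y(\tau_0), \chi \partial_t y(\tau_0))$ and source $\widecheck{u - y^5}$. Then $y$ and $\tilde y$ solve the same linear wave equation with data that agrees on the spatial sections of the backwards light cone issuing from $(T^\bullet, \x_0)$, so finite speed of propagation (Remark~\ref{rem:wellposed}) yields $\tilde y \equiv y$ on $\Lambda(\delta; \tau_0, T)$.

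Applying the Strichartz estimate~\eqref{eq:strichartz-estimate} to $\tilde y$, bounding the localized datum by the energy estimate of Lemma~\ref{lem:energy-conservation-estimate}, and controlling the localized source by the extension bound~\eqref{eq:extension-estimate} together with~\eqref{eq:interpolation-l1l2} restricted to the cone, yields
\begin{equation*}
\|\tilde y\|_{\Le^4(\tau_0,T;\Le^{12}(\Omega))} \lesssim \sqrt{E_0} + \|u\|_{\Le^1(\tau_0,T;\Le^2(\Omega))} + \eps\, \|y\|_{\LeLe4{12}(\Lambda(\delta;\tau_0,T))}^4.
\end{equation*}
Since $\tilde y = y$ on $\Lambda(\delta; \tau_0, T)$, the left-hand side dominates $X(T) \defn \|y\|_{\LeLe4{12}(\Lambda(\delta;\tau_0,T))}$, so we obtain $X(T) \leq A + B \eps X(T)^4$ with constants $A,B > 0$ independent of $T$.

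Finally, by Theorem~\ref{thm:local-existence} the map $T \mapsto X(T)$ is continuous on $[\tau_0, T^\bullet)$ with $X(\tau_0) = 0$; choosing $\eps >0$ small enough so that the polynomial $x \mapsto B \eps x^4 - x + A$ possesses a positive real root (concretely, something like $4B\eps A^3 < 1$ suffices), Lemma~\ref{lem:useful-estimate} together with a connectedness argument forces $X(T)$ to remain below the smallest such root, say $X(T) \leq 2A$, uniformly in $T \in [\tau_0, T^\bullet)$. Passing $T \nearrow T^\bullet$ yields $y \in \LeLe4{12}(\Lambda(\delta; \tau_0, T^\bullet))$, so $t_0 \defn \tau_0$ works. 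The main obstacle is arranging the localizations compatibly: one must truncate the $\H^1_0$-valued initial datum at a slightly larger scale $\delta' > \delta$ so that the Strichartz estimate can be invoked globally on $\Omega$, while still ensuring that finite speed of propagation identifies $\tilde y$ with $y$ on the target cone and that the small factor $\eps$ genuinely appears in front of $X(T)^4$; once the self-improving inequality is in place, the closing bootstrap is routine.
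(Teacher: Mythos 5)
Your proposal is correct and follows essentially the same route as the paper: localize the source via Lemma~\ref{lem:localization}, identify the auxiliary linear solution with $y$ on the cone by finite speed of propagation, apply the Strichartz estimate and the interpolation inequality to produce the self-improving bound $X(T)\leq A+B\eps X(T)^4$, and close with Lemma~\ref{lem:useful-estimate}. The only (harmless) deviation is that you also truncate the initial datum with a spatial cutoff, whereas the paper simply takes the full $\xi_y(t_0)$, which is already controlled by the energy bound.
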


\begin{proof}
  Let $t_0 \in (0 \vee {T^\bullet +\delta-r_{\text{ext}}},T^\bullet)$ be fixed for
  now, to be chosen later. We use the assumption in conjunction with the Strichartz
  estimate~\eqref{eq:strichartz-estimate}. Let $\check y$ be the function from
  Lemma~\ref{lem:localization} coinciding with $y$ on $\Lambda(\delta;t_0,T^\bullet)$
  and let $w$ be given by
  \begin{equation*}
    \xi_w(t) = e^{\cA t}\xi_y(t_0)
    + \int_{t_0}^t e^{\cA (t-s)} \begin{pmatrix}
      0 \\ u(s) - \check y^5(s)
    \end{pmatrix}
    \dd s,
  \end{equation*}
  so the mild solution to the linear wave equation~\eqref{eq:LWE} on
  $[t_0,T^\bullet)$ with $f = u - \check y$ and initial data $\xi_y(t_0)$. Then $w$
  coincides with $y$ on $\Lambda(\delta;t_0,T^\bullet)$ due to finite speed of
  propagation, cf.\ Remark~\ref{rem:wellposed}. Using the Strichartz
  estimate~\eqref{eq:strichartz-estimate} for this linear equation we obtain
  \begin{align*}
    \|y\|_{\LeLe4{12}(\Lambda(\delta;t_0,T^\bullet))} &\leq
    \|w\|_{\Le^4(t_0,T^\bullet;\Le^{12}(\Omega))} \\ &\lesssim_{T^\bullet} 
    \bigl\|\xi_y(t_0)\bigr\|_{\cE} + \bigl\|\check
    y^5\bigr\|_{\Le^1(t_0,T^\bullet;\Le^2(\Omega))} +  
    \bigl\|u\bigr\|_{\Le^1(t_0,T^\bullet;\Le^2(\Omega))}.
  \end{align*}
  The extension estimate~\eqref{eq:extension-estimate} and the interpolation
  inequality~\eqref{eq:interpolation-l5l10} further yield
  \begin{align*}
    \bigl\|\check y^5\bigr\|_{\Le^1(t_0,T^\bullet;\Le^2(\Omega))} & \lesssim
    \bigl\|y\bigr\|_{\LeLe5{10}(\Lambda(\delta;t_0,T^\bullet))} \\ & \lesssim
    \|y\|_{\LeLe4{12}(\Lambda(\delta;t_0,T^\bullet))}^4  \|y\|_{\LeLe\infty6(\Lambda(\delta;t_0,T^\bullet))}.
  \end{align*}
  Hence, choosing $\eps$ small enough (cf.\ Lemma~\ref{lem:useful-estimate}) and if
  necessary enlarging $t_0$ to $\tau_0(\eps)$, we obtain
  \begin{equation*}
    \|y\|_{\LeLe4{12}(\Lambda(\delta;t_0,T^\bullet))} \lesssim 
    \bigl\|\xi_y(t_0)\bigr\|_{\cE} + 
    \|u\|_{\Le^1(t_0,T^\bullet;\Le^2(\Omega))}.
  \end{equation*}
  An application of the energy bound~\eqref{eq:a-priori-estimate-energy} then yields
  the claim.
\end{proof}

An immediate consequence of Proposition~\ref{prop:l-infty-disc-implies-l4l12-cone}
and its proof together with the interpolation
inequality~\eqref{eq:interpolation-l5l10} is the following:

\begin{corollary}
  \label{cor:l5-l10-small} Let the assumption of
  Proposition~\ref{prop:l-infty-disc-implies-l4l12-cone} hold true for some $\delta$
  satisfying $0 \leq \delta < r_{\text{ext}}$. Then, for every $\eps > 0$, there
  exists $t_0 \in (0,T^\bullet)$ such that
  \begin{equation*}
    \bigl\|y\bigr\|_{\LeLe5{10}(\Lambda(\delta;t_0,T^\bullet))} < \eps. 
  \end{equation*}
\end{corollary}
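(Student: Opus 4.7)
The plan is to combine the conclusion (and not merely the statement) of Proposition~\ref{prop:l-infty-disc-implies-l4l12-cone} with a cone-localized version of the interpolation inequality~\eqref{eq:interpolation-l5l10}. The key observation is that the $\LeLe4{12}$-bound obtained in the proof of Proposition~\ref{prop:l-infty-disc-implies-l4l12-cone}, after invoking the uniform energy bound~\eqref{eq:a-priori-estimate-energy}, depends only on the initial data and $u$; it is in particular independent of how close to $T^\bullet$ we slide the base of the cone. I would first extract from that proof a number $t_1 \in (0,T^\bullet)$ and a constant $M = M(E_0, T^\bullet, \delta) > 0$ with
\begin{equation*}
  \|y\|_{\LeLe4{12}(\Lambda(\delta;t,T^\bullet))} \leq M \qquad \text{for all}~t \in [t_1,T^\bullet),
\end{equation*}
using monotonicity of the $\LeLe4{12}$-norm under shrinking the time base of the cone.

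Next, the interpolation inequality~\eqref{eq:interpolation-l5l10} was stated on cylinders $(0,T) \times \Omega$, but the exact same argument---H\"older in space slice-by-slice applied to $P_\x D_t^\delta$, followed by H\"older in time---yields its cone analogue
\begin{equation*}
  \|y\|_{\LeLe5{10}(\Lambda(\delta;t_0,T^\bullet))}^5 \leq \|y\|_{\LeLe4{12}(\Lambda(\delta;t_0,T^\bullet))}^4 \, \|y\|_{\LeLe\infty6(\Lambda(\delta;t_0,T^\bullet))}
\end{equation*}
for any $t_0 \in [0, T^\bullet)$ and $\delta \geq 0$.

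Given $\eps > 0$, I would then invoke the hypothesis (which is the assumption of Proposition~\ref{prop:l-infty-disc-implies-l4l12-cone}) with threshold $\eps^5/M^4$ in place of $\eps$ to obtain some $t_0 \in [t_1,T^\bullet)$ with
\begin{equation*}
  \|y\|_{\LeLe\infty6(\Lambda(\delta;t_0,T^\bullet))} < \frac{\eps^5}{M^4}.
\end{equation*}
Plugging this and the uniform bound $M$ into the cone-interpolation inequality gives $\|y\|_{\LeLe5{10}(\Lambda(\delta;t_0,T^\bullet))} < \eps$, which is the claim. There is no real obstacle here; the only point that requires a brief justification rather than a citation is the cone version of the interpolation inequality, but that reduces to H\"older exactly as for the cylindrical case, so the argument is essentially immediate from the ingredients already assembled.
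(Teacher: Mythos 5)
Your argument is correct and is precisely the route the paper intends: the paper gives no explicit proof, declaring the corollary an immediate consequence of Proposition~\ref{prop:l-infty-disc-implies-l4l12-cone}, its proof (which yields the $\LeLe4{12}$-bound in terms of $E_0$ only), and the interpolation inequality~\eqref{eq:interpolation-l5l10}, which localizes to cones by slice-wise H\"older exactly as you note. Your write-up simply fills in these details, including the harmless observations about monotonicity under shrinking the time base and the choice of threshold $\eps^5/M^4$.
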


We will need a bound for the energy transfer from one time level to another in the
light cones when we come close enough to $T^\bullet$. The following lemma states that
this is possible and, crucially, even uniformly in $\delta$.

\begin{lemma}
  \label{lem:flux}
  For every $\eps > 0$ there is $t_0 \in [0,T^\bullet)$ such that
  \begin{equation*}
    E_y(\delta;s)  \leq E_y(\delta;\tau_0) + \eps
  \end{equation*}
  for all $\tau_0,s$ satisfying $t_0 \leq \tau_0 \leq s < T^\bullet$, and all
  $\delta \geq 0$.
\end{lemma}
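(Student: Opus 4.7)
The plan is to generalize the energy--flux identity~\eqref{eq:flux-equality}, which was derived in the proof of Proposition~\ref{prop:l6-nonconcentration} for $\delta=0$, to the $\delta$-enlarged backwards cone $\Lambda(\delta;\tau_0,s)$ for arbitrary $\delta \geq 0$, and then exhaust the $u$-dependent contribution by absolute continuity of the Lebesgue integral, uniformly in $\delta$.

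First, I would note that the derivation of~\eqref{eq:flux-equality} in~\cite{SS95, BLP08, S08} proceeds by integrating the divergence identity for the energy current $e$ against the translated cone. The only geometric feature that enters is that the mantle
\begin{equation*}
   M_{\tau_0}^{s}(\delta) \defn \bigl\{(t,\x) \in \Lambda(\delta;\tau_0,s) \colon |\x-\x_0| = \delta + T^\bullet - t\bigr\}
\end{equation*}
is a null hypersurface (its slope in $t$ is $-1$ independently of $\delta$), so the flux
\begin{equation*}
   \Flux(y;\delta;\tau_0,s) \defn \int_{M_{\tau_0}^{s}(\delta)} e(t,\x) \cdot \nu(t,\x) \dd \sigma(t,\x)
\end{equation*}
is nonnegative. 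The homogeneous Dirichlet boundary conditions ensure that the contribution from $\partial\Omega$ vanishes as in the $\delta=0$ case. Applying the divergence identity to the inhomogeneous equation thus yields, for every $\delta \geq 0$ and every $0 \leq \tau_0 \leq s < T^\bullet$,
\begin{equation*}
   E_y(\delta;s) + \Flux(y;\delta;\tau_0,s) + \int_{\Lambda(\delta;\tau_0,s)} \partial_t y(t,\x) \cdot u(t,\x) \dd(t,\x) = E_y(\delta;\tau_0).
\end{equation*}
As before, I would verify this first for smooth solutions and then pass to mild solutions by wellposedness (Remark~\ref{rem:wellposed}).

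From $\Flux(y;\delta;\tau_0,s) \geq 0$ I then get
\begin{equation*}
   E_y(\delta;s) \leq E_y(\delta;\tau_0) + \left|\int_{\Lambda(\delta;\tau_0,s)} \partial_t y(t,\x) \cdot u(t,\x) \dd(t,\x)\right|.
\end{equation*}
Since $\Lambda(\delta;\tau_0,s) \subseteq [\tau_0,s] \times \Omega$, a Cauchy--Schwarz/Hölder estimate in space and time, combined with the uniform energy bound~\eqref{eq:a-priori-estimate-energy}, gives
\begin{equation*}
   \left|\int_{\Lambda(\delta;\tau_0,s)} \partial_t y(t,\x) \cdot u(t,\x) \dd(t,\x)\right| \leq \|\partial_t y\|_{\Le^\infty(\tau_0,s;\Le^2(\Omega))} \|u\|_{\Le^1(\tau_0,s;\Le^2(\Omega))} \lesssim \sqrt{E_0}\,\|u\|_{\Le^1(\tau_0,T^\bullet;\Le^2(\Omega))},
\end{equation*}
with a constant that does \emph{not} depend on $\delta$.

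Finally, since $u \in \Le^1(0,\sT;\Le^2(\Omega))$, absolute continuity of the integral ensures that one may pick $t_0 \in [0,T^\bullet)$ close enough to $T^\bullet$ so that $\sqrt{E_0}\,\|u\|_{\Le^1(t_0,T^\bullet;\Le^2(\Omega))} < \eps$. This choice is independent of $\delta$, yielding the claim for all $\delta \geq 0$ simultaneously. The only mildly delicate point is the verification of the flux identity for $\delta > 0$; but since the geometry of the translated cone is identical up to a shift and its mantle remains null-like, this is a routine adaptation of the derivation already invoked for $\delta=0$.
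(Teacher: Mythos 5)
Your proof is correct and follows essentially the same route as the paper: the paper likewise invokes the energy--flux inequality on the $\delta$-enlarged cone for every $\delta\geq 0$, drops the nonnegative flux, bounds the $u$-term by $\|\partial_t y\|_{\Le^\infty(\Le^2)}\|u\|_{\Le^1(t_0,T^\bullet;\Le^2(\Omega))}$ uniformly in $\delta$ via~\eqref{eq:a-priori-estimate-energy}, and chooses $t_0$ by absolute continuity of the integral. Your explicit justification that the translated mantle remains null and the Dirichlet boundary contribution vanishes is a point the paper simply asserts.
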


\begin{proof}
  Let $0 \leq t_0 \leq \tau_0 \leq s < T^\bullet$.  As in the proof of
  Proposition~\ref{prop:l5-l10-small-then-cone-bigger}, we obtain for \emph{every}
  $\delta \geq 0$
  \begin{equation*}
    E_y(\delta,s)  + \int_{\Lambda(\delta;\tau_0,s)} \partial_t y(t,\x) \cdot  u(t,\x) \dd(t,\x) \leq E_y(\delta,\tau_0). 
  \end{equation*}
  and so
  \begin{equation*} E_y(\delta,s) \leq E_y(\delta,\tau_0) +
    E_0\bigl\|u\bigr\|_{\Le^1(t_0,T^\bullet;\Le^2(\Omega))}.
  \end{equation*}
  Choosing $t_0$ sufficiently close to $T^\bullet$, this gives the claim.
\end{proof}

Finally, the next proposition shows that the $\Le^6$-non-concentration effect as
proven in Proposition~\ref{prop:l6-nonconcentration} in fact also holds in
$\alpha$-enlarged light cones for $\alpha > 0$ sufficiently small. This will then
immediately imply the main Theorem~\ref{thm:global-existence} below.

\begin{proposition}
  \label{prop:l5-l10-small-then-cone-bigger} Let the assumption of
  Proposition~\ref{prop:l-infty-disc-implies-l4l12-cone} hold true for $\delta =
  0$. Then, for every $\eps > 0$ there exist $t_0 \in (0,T^\bullet)$ and
  $0<\alpha < r_{\text{ext}}$ such that
  \begin{equation*}
    \|y\|_{\LeLe\infty6(\Lambda(\alpha;t_0,T^\bullet))} < \eps.
  \end{equation*}
\end{proposition}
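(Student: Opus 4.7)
\emph{Plan.} The strategy will be to establish a flux identity on the $\alpha$-enlarged backward cone and then to use it, together with the assumption, to reduce the uniform-in-$s$ smallness claim to making the local energy $E_y(\alpha;\tau_0)$ small at a single time slice $\tau_0$ close to $T^\bullet$. The latter I then achieve by combining full-energy non-concentration on the base cone with absolute continuity of the integral over a thin spherical shell.

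\emph{Step 1 (flux identity).} For every $\delta \geq 0$ and $0 \leq \tau_0 \leq s < T^\bullet$, I will derive the flux identity
\begin{equation*}
E_y(\delta;s) + \Flux_\delta(y;\tau_0,s) + \int_{\Lambda(\delta;\tau_0,s)}\partial_t y(t,\x) \cdot u(t,\x)\dd(t,\x) = E_y(\delta;\tau_0)
\end{equation*}
by multiplying~\eqref{eq:main-equation} with $\partial_t y$, integrating over $\Lambda(\delta;\tau_0,s)$, and applying the divergence theorem, as in the proof of Proposition~\ref{prop:l6-nonconcentration} (with the customary approximation by smooth solutions via wellposedness). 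Since the mantle of every $\delta$-enlarged cone is a null hypersurface, the flux $\Flux_\delta$ is nonnegative. In particular this supplies the inequality required in the proof of Lemma~\ref{lem:flux}.

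\emph{Step 2 (reduction).} Bounding the $u$-term by $\sqrt{E_0}\,\|u\|_{\Le^1(\tau_0,T^\bullet;\Le^2(\Omega))}$ using~\eqref{eq:a-priori-estimate-energy} turns the flux identity of Step~1 into
\begin{equation*}
E_y(\alpha;s) \leq E_y(\alpha;\tau_0) + \sqrt{E_0}\,\|u\|_{\Le^1(\tau_0,T^\bullet;\Le^2(\Omega))}.
\end{equation*}
Since $\|y(s)\|_{\Le^6(P_\x D_s^\alpha)}^6 \leq 6 E_y(\alpha;s)$, it will suffice to exhibit $\tau_0$ close to $T^\bullet$ and $\alpha\in(0,r_{\text{ext}})$ for which $E_y(\alpha;\tau_0)<\eps^6/12$ and $\sqrt{E_0}\,\|u\|_{\Le^1(\tau_0,T^\bullet;\Le^2(\Omega))}<\eps^6/12$; the latter is automatic once $\tau_0$ is chosen close enough to $T^\bullet$, by absolute continuity of the $\Le^1$-integral.

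\emph{Step 3 (construction of $\tau_0$ and $\alpha$).} The $\Le^6$-part of $E_y(0;\tau_0)$ vanishes as $\tau_0\nearrow T^\bullet$ by the assumption, and I will argue (this is the main obstacle) that the kinetic and gradient parts do likewise: the Morawetz-type identity used in the proof of Proposition~\ref{prop:l6-nonconcentration} (compare~\cite[Prop.~3.3]{BLP08} and~\cite[Lem.~3.3]{SS95}), combined with the flux identity of Step~1 applied with $\delta=0$ and the absorption of $u$ as in Proposition~\ref{prop:l6-nonconcentration}, rules out any concentration of full local energy at $\x_0$, so $E_y(0;\tau_0)\to 0$. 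Granted this, I fix such a $\tau_0$ with $E_y(0;\tau_0)<\eps^6/24$ (and the $u$-integral bounded by $\eps^6/(24\sqrt{E_0})$), and then exploit the decomposition
\begin{equation*}
E_y(\alpha;\tau_0) = E_y(0;\tau_0) + \int_{B(\x_0,\alpha+T^\bullet-\tau_0)\setminus B(\x_0,T^\bullet-\tau_0)} \Bigl(\tfrac{|\nabla y(\tau_0)|^2+|\partial_t y(\tau_0)|^2}{2}+\tfrac{|y(\tau_0)|^6}{6}\Bigr)\dd\x.
\end{equation*}
Since $\xi_y(\tau_0)\in\cE$ and $y(\tau_0)\in\Le^6(\Omega)$, the integrand lies in $\Le^1(\Omega)$, so absolute continuity of the Lebesgue integral lets me pick $\alpha\in(0,r_{\text{ext}})$ small enough that the shell integral is below $\eps^6/24$. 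This yields $E_y(\alpha;\tau_0)<\eps^6/12$, and Step~2 then delivers $\|y\|_{\LeLe\infty6(\Lambda(\alpha;\tau_0,T^\bullet))}<\eps$, as required.
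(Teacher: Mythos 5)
Your Steps 1 and 2 are sound and match the paper's reduction (the flux inequality uniform in $\delta$ is the paper's Lemma~\ref{lem:flux}, and the shell/absolute-continuity argument for shrinking $\alpha$ corresponds to the paper's dominated-convergence step). The problem is Step~3, which you yourself flag as ``the main obstacle'' and then resolve by assertion. The Morawetz identity in the proof of Proposition~\ref{prop:l6-nonconcentration} controls only the $\Le^6$ part of the local energy on the slices $P_\x D_t$; it says nothing about the kinetic and gradient terms. Likewise, the flux identity with $\delta=0$ only shows that $t \mapsto E_y(0;t)$ is almost monotone and hence has a limit --- it does not show that limit is zero. Ruling out concentration of the \emph{full} local energy is precisely the content that the chain Proposition~\ref{prop:l6-nonconcentration} $\to$ Proposition~\ref{prop:l-infty-disc-implies-l4l12-cone} $\to$ Corollary~\ref{cor:l5-l10-small} is built to deliver, and your proposal never uses the $\LeLe4{12}$/$\LeLe5{10}$ integrability on the cone at all, which is a strong sign the key mechanism is missing.

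The paper closes this gap as follows: starting from a $\tau_0$ where $\|y\|_{\LeLe5{10}(\Lambda(0;\tau_0,T^\bullet))}$ is small (Corollary~\ref{cor:l5-l10-small}, available exactly because the hypothesis with $\delta=0$ gives the $\LeLe4{12}$ cone bound), one splits $y$ on $[\tau_0,T^\bullet)$ into the free evolution $y_h$ of the data $\xi_y(\tau_0)$ and the Duhamel part $y_i$ driven by $u-\check y^5$ (with $\check y$ the localization of $y$ to the cone). The energy estimate~\eqref{eq:energy-estimate} makes $\|\xi_{y_i}\|_{\C(\cE)}$ and $\|y_i\|_{\LeLe\infty6}$ small, of order $\eps^5$, because $\|\check y^5\|_{\Le^1\Le^2} \lesssim \|y\|_{\LeLe5{10}(\Lambda)}^5$ and $\|u\|_{\Le^1(\tau_0,T^\bullet;\Le^2)}$ are small; and $E_{y_h}(0;t)\to 0$ as $t\nearrow T^\bullet$ because $\xi_{y_h}$ extends continuously to $t=T^\bullet$ while the slices $P_\x D_t$ shrink to the point $\x_0$. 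This is how $E_y(0;t)\to 0$ is actually obtained; without some version of this splitting your Step~3 does not go through.
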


\begin{proof}
  Let $\eps > 0$. We do explicit estimates to demonstrate that there are no implicit
  dependencies on the choice of $t_0$ along the proof. Via
  Corollary~\ref{cor:l5-l10-small} and Lemma~\ref{lem:flux}, choose
  $\tau_0 \in (0 \vee {T^\bullet -r_{\text{ext}}},T^\bullet)$ such that
  \begin{equation*}
    \bigl\| y\bigr\|_{\LeLe5{10}(\Lambda(0;s,T^\bullet))} < \eps
  \end{equation*}
  and
  \begin{equation*}
    E_y(\delta;s)  \leq E_y(\delta;\eta_0) + C_e(T^\bullet) C_{\text{ext}}^5\eps^5,
  \end{equation*}
  as well as
  \begin{equation*}
    \|u\|_{\Le^1(s,T^\bullet;\Le^2(\Omega))} \leq C_{\text{ext}}^5\eps^5,
  \end{equation*}
  all for all $\tau_0 \leq \eta_0 \leq s < T^\bullet$, and all $\delta \geq 0$, where
  $C_e$ was the constant from Lemma~\ref{lem:estimates}.

  Let again $\check y$ be the function from Lemma~\ref{lem:localization} coinciding
  with $y$ on $\Lambda(0;\tau_0,T^\bullet)$. We split the local solution $y$ on
  $[\tau_0,T^\bullet)$ into a homogeneous part $y_h$ and an inhomogeneous part $y_i$
  by
  \begin{equation*}
    \xi_{y_h}(t) \defn e^{\cA t}\xi_y(\tau_0), \qquad \xi_{y_i}(t) \defn \int_{\tau_0}^t e^{\cA
      (t-s)} 
    \begin{pmatrix}
      0 \\ u(s) - y^5(s)
    \end{pmatrix}
    \dd s.
  \end{equation*}
  With $w_i$ defined by \begin{equation*} \xi_{w_i}(t) \defn \int_{\tau_0}^t e^{\cA
      (t-s)} \begin{pmatrix} 0 \\ u(s) - \check y^5(s)
    \end{pmatrix}
    \dd s
  \end{equation*}
  on $[\tau_0,T^\bullet]$, we have $w_i = y_i$ on
  $\Lambda(0;\tau_0,T^\bullet)$. Thus, the estimates in Lemma~\ref{lem:estimates}
  together with Remark~\ref{rem:norm-solution-operator-monotone}, the choice of
  $\tau_0$, and~\eqref{eq:extension-estimate} imply
  \begin{multline*}
    \|\nabla y_i\|_{\LeLe\infty2(\Lambda(0;\tau_0,T^\bullet))} + \|\partial_t y_i\|_{\LeLe\infty2(\Lambda(0;\tau_0,T^\bullet))} + \|y_i\|_{\LeLe\infty6(\Lambda(0;\tau_0,T^\bullet))} \\
    \leq \|w_i\|_{\C([\tau_0,T^\bullet];\cE)} +
    \|w_i\|_{\Le^\infty(\tau_0,T^\bullet;\Le^6(\Omega))} \\\leq C_e(T^\bullet)
    \left(\bigl\|\check y\bigr\|_{\Le^5(\tau_0,T^\bullet);\Le^{10}(\Omega)}^5 +
      \|u\|_{\Le^1(\tau_0,T^\bullet;\Le^2(\Omega))}\right) < 2C_e(T^\bullet)
    C_{\text{ext}}^5\eps^5.
  \end{multline*}
  For the local energy of the homogeneous part $y_h$ of $y$, we find by
  $y_h \in \C([0,T^\bullet];\cE)$ and conservation of energy
  \begin{multline*}
    \frac12 \int_{P_\x D_t} |\nabla y_h(t,\x)|^2 \dd \x \\ \lesssim \bigl\|\nabla
    y_h(t) - \nabla y_h(T^\bullet)\bigr\|_{\Le^2(\Omega)}^2 + \int_{P_\x D_t} |\nabla
    y_h(T^\bullet,\x)| \dd\x \, \xrightarrow{t \nearrow T^\bullet} \, 0.
  \end{multline*}
  Treating the $\partial_t y_h$ term in $E_{y_h}(0,t)$ analogously, we thus obtain
  \begin{equation*}
    \lim_{t\nearrow T^\bullet}E_{y_h}(0;t) = 0.
  \end{equation*}
  On the other hand, the inhomogeneous part was already estimated by
  \begin{equation*}
    E_{y_i}(0;t) <  2C_e(T^\bullet) C_{\text{ext}}^5\eps^5
  \end{equation*}
  for all $t \in [\tau_0,T^\bullet]$, hence we can choose
  $t_0 \in [\tau_0,T^\bullet)$ to obtain
  \begin{equation*}
    E_{y}(0;t) \leq  2^6\bigl(E_{y_h}(0;t) + E_{y_i}(0;t)\bigr) <  129C_e(T^\bullet) C_{\text{ext}}^5\eps^5
  \end{equation*}
  for all $t \in [t_0,T^\bullet]$. The ``homogeneous energy''
  $\|\xi_y(t_0)\|_\cE + \|y(t_0)\|_{\Le^6(\Omega)}$ is an upper bound for
  $E_{y}(\delta;t_0)$ for every $\delta$, and finite by
  Lemma~\ref{lem:energy-conservation-estimate}. Thus, the dominated convergence
  theorem, used with respect to $\delta$, yields $\alpha = \alpha(t_0) > 0$ such that
  \begin{equation*}
    E_{y}(\alpha;t_0) <  130C_e(T^\bullet) C_{\text{ext}}^5\eps^5
  \end{equation*}
  and $\alpha < r_{\text{ext}}$.  We can then finally make use of the choice of
  $\tau_0$ done at the beginning of the proof and its uniformity w.r.t.\ $\delta$ to
  find
  \begin{multline*}
    \|y\|_{\LeLe\infty6(\Lambda(\alpha;t_0,T^\bullet))} \leq \sup_{t_0 \leq t <
      T^\bullet} E_{y}(\alpha;t) \\\leq E_{y}(\alpha;t_0) + C_e(T^\bullet)
    C_{\text{ext}}^5\eps^5 < 131C_e(T^\bullet) C_{\text{ext}}^5\eps^5.
  \end{multline*}
  This completes the proof.
\end{proof}

\begin{theorem}[Global existence]
  \label{thm:global-existence}
  For every $u \in \Le^1(0,\sT;\Le^2(\Omega))$, the local solution $y$
  to~\eqref{eq:main-equation} as given in Theorem~\ref{thm:local-existence} exists
  globally in time on the interval $[0,\sT]$ and satisfies
  $\xi_y \in \C([0,\sT];\cE)$ and $y \in \Le^4(0,\sT;\Le^{12}(\Omega))$.
\end{theorem}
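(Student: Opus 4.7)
The plan is to argue by contradiction: assume the maximal time $T^\bullet$ from Theorem~\ref{thm:local-existence} satisfies $T^\bullet < \sT$, and aim to construct a continuous extension of $\xi_y$ to $t = T^\bullet$ in $\cE$ together with $y \in \Le^4(0,T^\bullet;\Le^{12}(\Omega))$. Once this is in hand, a further application of Theorem~\ref{thm:local-existence} at time $T^\bullet$ with initial data $\xi_y(T^\bullet)$ and forcing $u|_{[T^\bullet,\sT]}$ produces a strict extension past $T^\bullet$, contradicting maximality and thus forcing $T^\bullet = \sT$. The statements $\xi_y \in \C([0,\sT];\cE)$ and $y \in \Le^4(0,\sT;\Le^{12}(\Omega))$ then follow from Theorem~\ref{thm:local-existence} combined with~\eqref{eq:a-priori-estimate-energy} and~\eqref{eq:interpolation-l1l2}.

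The preparatory work in Propositions~\ref{prop:l6-nonconcentration}--\ref{prop:l5-l10-small-then-cone-bigger} already does most of the heavy lifting; the task is to stitch the cone-localized information together. For each $\x_0 \in \overline\Omega$, Proposition~\ref{prop:l6-nonconcentration} supplies the hypothesis of Proposition~\ref{prop:l-infty-disc-implies-l4l12-cone} with $\delta = 0$, since $\lim_{t\nearrow T^\bullet} g(t) = 0$ is equivalent to $\sup_{[\tau_0,T^\bullet)} g \to 0$ as $\tau_0 \nearrow T^\bullet$. Proposition~\ref{prop:l5-l10-small-then-cone-bigger} then provides, for prescribed small $\eps > 0$, a number $\alpha_{\x_0} \in (0,r_{\text{ext}})$ and a time $t_0^{\x_0} \in (0,T^\bullet)$ such that $\|y\|_{\LeLe\infty6(\Lambda(\alpha_{\x_0};t_0^{\x_0},T^\bullet))} < \eps$. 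Reapplying Proposition~\ref{prop:l-infty-disc-implies-l4l12-cone} with $\delta = \alpha_{\x_0}$ yields $t_1^{\x_0} \in [t_0^{\x_0},T^\bullet)$ with $y \in \LeLe4{12}(\Lambda(\alpha_{\x_0};t_1^{\x_0},T^\bullet))$.

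The main step is then a compactness-based spatial covering. Because the slice $P_\x D_t^{\alpha_{\x_0}}$ has radius $\alpha_{\x_0} + T^\bullet - t \geq \alpha_{\x_0}$ for every $t \leq T^\bullet$, it contains $B_{\alpha_{\x_0}}(\x_0) \cap \overline\Omega$ for all such $t$. By compactness of $\overline\Omega$ one selects $\x_1,\dots,\x_N$ with $\overline\Omega \subseteq \bigcup_{i=1}^N B_{\alpha_{\x_i}}(\x_i)$, sets $t^\star \defn \max_i t_1^{\x_i} < T^\bullet$, and records the pointwise-in-$t$ bound
\begin{equation*}
\|y(t)\|_{\Le^{12}(\Omega)}^{12} \leq \sum_{i=1}^N \|y(t)\|_{\Le^{12}(P_\x D_t^{\alpha_{\x_i}})}^{12}, \qquad t \in [t^\star,T^\bullet).
\end{equation*}
Raising to the power $1/3$, using concavity of $s \mapsto s^{1/3}$, and integrating in $t$ delivers $y \in \Le^4(t^\star,T^\bullet;\Le^{12}(\Omega))$; concatenating with the local regularity on $[0,t^\star]$ from Theorem~\ref{thm:local-existence} yields $y \in \Le^4(0,T^\bullet;\Le^{12}(\Omega))$.

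Finally, the interpolation~\eqref{eq:interpolation-l1l2} combined with~\eqref{eq:a-priori-estimate-energy} gives $y^5 \in \Le^1(0,T^\bullet;\Le^2(\Omega))$. Applying Lemma~\ref{lem:estimates} to the difference $\xi_y(t) - \xi_y(s)$ read off from~\eqref{eq:variation-of-constants}, together with strong continuity of the semigroup $e^{\cA\cdot}$, shows that $\xi_y$ is Cauchy in $\cE$ as $t \nearrow T^\bullet$ and admits a limit $\xi_y(T^\bullet) \in \cE$. A last invocation of Theorem~\ref{thm:local-existence} then furnishes the extension beyond $T^\bullet$ and the desired contradiction. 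I expect the main obstacle to be the covering step: each $\x_0$ carries its own scale $\alpha_{\x_0}$ and its own threshold time $t_1^{\x_0}$, so one has to confirm that a \emph{single} time $t^\star < T^\bullet$ simultaneously realizes all finitely many cone estimates obtained from the preceding propositions, and that the geometry of the backward light cones cooperates with $\overline\Omega$ uniformly in $t$ near $T^\bullet$.
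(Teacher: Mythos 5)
Your proposal is correct and follows essentially the same route as the paper: the chain Proposition~\ref{prop:l6-nonconcentration} $\to$ Proposition~\ref{prop:l-infty-disc-implies-l4l12-cone} ($\delta=0$) $\to$ Proposition~\ref{prop:l5-l10-small-then-cone-bigger} $\to$ Proposition~\ref{prop:l-infty-disc-implies-l4l12-cone} ($\delta=\alpha$), followed by a compactness covering of $\overline\Omega$ by the slices $P_\x D_{T^\bullet}^{\alpha(\x_i)}$, taking the maximum of the finitely many threshold times, and closing with the energy/Strichartz estimates to extend past $T^\bullet$. The only difference is that you spell out the subadditivity step for summing the $\Le^{12}$ norms over the covering pieces, which the paper leaves implicit.
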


\begin{proof}
  We had already noted that it is sufficient to show that the local solution $y$
  satisfies $y \in \Le^4(0,T^\bullet;\Le^{12}(\Omega))$ since this allows to show
  that $\lim_{t\nearrow T^\bullet} \xi_y(t)$ exists in $\cE$ via the variation of
  constants formula and the estimates as in Lemma~\ref{lem:estimates} together with
  the energy conservation~\eqref{eq:a-priori-estimate-energy}. Now, for this purpose,
  let $\x_0 \in \overline\Omega$ be fixed. Proposition~\ref{prop:l6-nonconcentration}
  tells us that the premise of Proposition~\ref{prop:l-infty-disc-implies-l4l12-cone}
  is satisfied for $\delta = 0$. From there,
  Proposition~\ref{prop:l5-l10-small-then-cone-bigger} implies, again via
  Proposition~\ref{prop:l-infty-disc-implies-l4l12-cone}, that there are
  $t_0 \in (0,T^\bullet)$ and $\alpha > 0$ such that
  $y \in \LeLe4{12}(\Lambda(\alpha;t_0,T^\bullet))$.

  This can be done for every $\x_0 \in \overline\Omega$. Then, the collection of sets
  $\bigl(P_\x D_{T^\bullet}^{\alpha(\x_0)}\bigr)_{\x_0 \in\overline\Omega}$ is a
  (relatively) open covering of $\overline \Omega$. Compactness of the latter gives a
  finite set of points $\x_i \in \overline\Omega$, $i=1,\dots,n$, such that
  $\bigl(P_\x D_{T^\bullet}^{\alpha(\x_i)}\bigr)_{i=1,\dots,n}$ is still a
  (relatively) open covering of $\overline\Omega$. Setting
  $t^\star_0 \defn \max_{i=1,\dots,n} t_0(\x_i)$, we find
  $y \in \Le^4(t^\star_0,T^\bullet;\Le^{12}(\Omega))$, and since we already knew that
  $y \in \Lloc^4([0,T^\bullet);\Le^{12}(\Omega))$, this gives
  $y \in \Le^4(0,T^\bullet;\Le^{12}(\Omega))$ as desired.
\end{proof}

\section{Optimal control} \label{sec:optimal-control}

We recall the setup of the optimal control problem. Let
$y_d \in \Le^2(\Omega)$ and nonnegative scaling parameters
$\gamma,\beta_1,\beta_2$ be given. We had
\begin{multline*}
  \ell(y,u) 
  \defn \frac12\|y(\sT) - y_d\|_{\Le^2(\Omega)}^2 +
  \frac\gamma4\|y\|_{\Le^4(0,\sT;\Le^{12}(\Omega))}^4 \\
  + \beta_1 \|u\|_{\Le^1(0,\sT;\Le^2(\Omega))} + \frac{\beta_2}2
  \|u\|_{\Le^2(0,\sT;\Le^2(\Omega))}^2
\end{multline*}
for $y \in \C([0,\sT];\Le^2(\Omega)) \cap \Le^4(0,\sT;\Le^{12}(\Omega))$ and
$u \in \Le^r(0,\sT;\Le^2(\Omega))$, where $r = 1$ if $\beta_2 = 0$ and $r = 2$ if
$\beta_2 > 0$. We consider $\ell$ as a cost functional or performance index
for~\eqref{eq:main-equation}, resulting in the associated optimal control problem
\begin{equation*}
  \optiprog{\min_{y,u}}{\ell(y,u)}{u \in \uad, \\
    y~\text{is the solution to~\eqref{eq:main-equation}.}}{\ref{eq:OCP}} 
\end{equation*}
Here, $\uad$ is a closed and convex, and thus weakly closed, nonempty set of the form
\begin{equation*}
  \uad \defn \Bigl\{v \in \Le^r(0,\sT;\Le^2(\Omega)) \colon
  \|v(t)\|_{\Le^2(\Omega)} \leq  \omega(t)~\text{f.a.a.}~t
  \in (0,\sT)\Bigr\}
\end{equation*}
for a measurable function $\omega$ which is nonnegative almost everywhere on
$(0,\sT)$. We emphasize once more that $\omega$ is not assumed to be bounded away from $0$
uniformly almost everywhere.
Further, of course, the solution
$y$ in~\eqref{eq:OCP} is meant in the sense of Theorem~\ref{thm:global-existence}.

We will proceed to establish existence of globally optimal solutions
to~\eqref{eq:OCP} in the following. Moreover, we will give necessary optimality
conditions of both first and second order, and also second order sufficient conditions.

\subsection{Existence of globally optimal controls}

It now becomes convenient that a solution $y$ associated to $u$---which we denote by
$y_u$ from now on---in the sense of Theorem~\ref{thm:global-existence} is a
Shatah-Struwe solution as noted in Lemma~\ref{lem:weak-and-mild}:

\begin{theorem}[Existence of optimal controls]
  \label{thm:existence-OC} Let $\beta_2 > 0$ or let
  $\omega \in \Le^1(0,\sT)$, and let $\gamma > 0$. Then the optimal control
  problem~\eqref{eq:OCP} admits at least one globally optimal pair
  $(y_{\bar u},\bar u)$ with $\bar u \in \uad$ such that the state $y_{\bar u}$ is
  the unique global Shatah-Struwe solution to~\eqref{eq:main-equation} for the
  right-hand side $\bar u$.
\end{theorem}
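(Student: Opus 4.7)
I plan to apply the direct method of the calculus of variations. Since $\uad \ni 0$ and $\ell \geq 0$, the infimum $J^\star \defn \inf_{u \in \uad} \ell(y_u,u)$ is well-defined and finite; let $(u_n) \subset \uad$ be a minimizing sequence and write $y_n \defn y_{u_n}$ for the global Shatah--Struwe solutions from Theorem~\ref{thm:global-existence}. In the case $\beta_2 > 0$, the sequence $(u_n)$ is bounded in the reflexive space $\Le^2(0,\sT;\Le^2(\Omega))$ and admits a weakly convergent subsequence with limit $\bar u$. In the case $\omega \in \Le^1(0,\sT)$, the pointwise constraint $\|u_n(\cdot)\|_{\Le^2(\Omega)} \leq \omega(\cdot)$ supplies domination by a fixed $\Le^1$-function, hence uniform integrability, and the pointwise sets $\{u_n(t)\}$ lie in a weakly compact ball of $\Le^2(\Omega)$; the Diestel--Dunford--Pettis theorem in $\Le^1(0,\sT;\Le^2(\Omega))$ then yields a weakly convergent subsequence with limit $\bar u$. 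Either way, $\uad$ is convex and strongly closed as a pointwise constraint set, hence weakly closed, so $\bar u \in \uad$.

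The uniform $\Le^1(0,\sT;\Le^2(\Omega))$-bound on $(u_n)$---which holds in both cases---together with Lemma~\ref{lem:energy-conservation-estimate} yields a uniform bound on $\xi_{y_n}$ in $\Le^\infty(0,\sT;\cE)$. Crucially, the hypothesis $\gamma > 0$ combined with the boundedness of $\ell(y_n,u_n)$ along the minimizing sequence furnishes the uniform bound on $(y_n)$ in $\Le^4(0,\sT;\Le^{12}(\Omega))$, which is not otherwise available due to criticality. Passing to a further subsequence, we arrange weak-$\ast$ convergence of $y_n$ in $\Le^\infty(0,\sT;\H^1_0(\Omega))$ and of $\partial_t y_n$ in $\Le^\infty(0,\sT;\Le^2(\Omega))$, and weak convergence in $\Le^4(0,\sT;\Le^{12}(\Omega))$, all with a common limit $y^\star$. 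Using the compact embedding $\H^1_0(\Omega) \embedsc \Le^p(\Omega)$ for $p < 6$ together with the uniform Lipschitz property of $y_n$ into $\Le^2(\Omega)$, Arzel\`a--Ascoli yields strong convergence $y_n \to y^\star$ in $\C([0,\sT];\Le^p(\Omega))$ for every $p < 6$, and pointwise almost everywhere on $(0,\sT) \times \Omega$ after another subsequence.

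The main obstacle is the passage to the limit in the nonlinear term of the Shatah--Struwe weak formulation (Definition~\ref{def:weak-solution}). I would invoke Vitali's convergence theorem: pointwise convergence $y_n^5 \to (y^\star)^5$ a.e.\ is already at hand, and for any $\varphi \in \C_c^\infty((0,\sT) \times \Omega)$, the uniform $\Le^4(0,\sT;\Le^{12}(\Omega))$-bound combined with H\"older's inequality produces uniform integrability of $(y_n^5 \varphi)$ on $(0,\sT) \times \Omega$. Hence $y_n^5 \to (y^\star)^5$ in $\Le^1_{\mathrm{loc}}$ and the nonlinear term passes to the limit, while the linear terms do so directly by the weak-$\ast$ convergences. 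Consequently $y^\star$ is a Shatah--Struwe solution to~\eqref{eq:main-equation} for the right-hand side $\bar u$, and by Proposition~\ref{prop:shatah-struwe-uniqueness}, $y^\star = y_{\bar u}$. Finally, the terminal term $\frac12\|y_n(\sT)-y_d\|_{\Le^2(\Omega)}^2$ is continuous along the subsequence via $\C([0,\sT];\Le^2(\Omega))$-convergence, and the remaining three terms of $\ell$ are norms or squared norms on reflexive spaces (respectively $\Le^1$, where weak lower semicontinuity of the norm is guaranteed by the Dunford--Pettis framework above), hence weakly lower semicontinuous. Combining these facts with $y^\star = y_{\bar u}$ gives $\ell(y_{\bar u},\bar u) \leq \liminf_{n\to\infty} \ell(y_n,u_n) = J^\star$, so $(y_{\bar u},\bar u)$ is the desired globally optimal pair.
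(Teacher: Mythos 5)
Your overall architecture coincides with the paper's: minimizing sequence, weak compactness of $(u_n)$ (reflexivity if $\beta_2>0$, Dunford--Pettis/\allowbreak{}\cite{DRS93}-type compactness in $\Le^1(0,\sT;\Le^2(\Omega))$ if $\omega\in\Le^1(0,\sT)$), the energy bound for $\Le^\infty(0,\sT;\cE)$-compactness, the crucial use of $\gamma>0$ to extract the $\Le^4(0,\sT;\Le^{12}(\Omega))$ bound, Aubin--Lions--Simon type compactness for the nonlinearity, and finally uniqueness of Shatah--Struwe solutions to identify the limit with $y_{\bar u}$. Two remarks on the nonlinear term: your Vitali argument needs, besides the $\Le^4\Le^{12}$ bound, the $\Le^\infty(0,\sT;\Le^6(\Omega))$ energy bound (interpolation then places $y_n^5$ in a mixed Lebesgue space with both exponents strictly above $1$, which is what actually yields uniform integrability --- an $\Le^1$-type bound alone does not); the paper instead passes to the limit via strong convergence of $y_n^5$ in $\C([0,\sT];\Le^{p/5}(\Omega))$, which avoids this issue.

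There is, however, one genuine gap: you never verify the initial condition $\xi_{y^\star}(0)=\xi_0$, which is part of the definition of a Shatah--Struwe solution and is invisible to the weak formulation since the test functions vanish near $t=0$. Without it you cannot invoke Proposition~\ref{prop:shatah-struwe-uniqueness} to conclude $y^\star=y_{\bar u}$. Your Arzel\`a--Ascoli argument gives $y_n(0)\to y^\star(0)$ in $\Le^p(\Omega)$, hence $y^\star(0)=y_0$, but it says nothing about the velocity component $\partial_t y^\star(0)=y_1$. The paper closes this by observing that \eqref{eq:weak-solution-second-deriv} and the uniform $\Le^1(0,\sT;\Le^2(\Omega))$ bound on $(u_n)$ make $(\partial_t^2 y_n)$ bounded in $\Le^1(0,\sT;\H^{-1}(\Omega))$, so that a second application of the Aubin--Lions--Simon lemma yields $\partial_t y_n\to\partial_t y^\star$ in $\C([0,\sT];\H^{-1}(\Omega))$ and thus $\partial_t y^\star(0)=y_1$. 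With that step added (and the identification of the various weak limits, which you assert but is routine), your proof is complete.
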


\begin{proof}
  Since $\uad \neq \emptyset$, and $\ell$ is bounded from below by zero, we obtain an
  infimal sequence $(y_k,u_k)$ with $(u_k) \subseteq \uad$, such that $\ell(y_k,u_k)$
  tends to $\inf_{u \in \uad} \ell(y_u,u) > - \infty$ as $k$ goes to infinity, where
  $y_k \defn y_{u_k}$. Due to the assumptions, the sequence $(u_k)$ admits a
  subsequence, denoted by the same name, which converges weakly in
  $\Le^r(0,\sT;\Le^2(\Omega))$ to some limit $\bar u$. (We will keep the notation for
  all convergent subsequences in the following.) Indeed, if $\beta_2 > 0$, this is
  true because then the sequence $(u_k)$ is bounded in
  $\Le^2(0,\sT;\Le^2(\Omega))$. If
  $\omega \in \Le^1(0,\sT)$, then $\uad$ is in fact weakly compact in
  $\Le^1(0,\sT;\Le^2(\Omega))$, cf.~\cite[Cor.~2.6]{DRS93}. Due to weak closedness of
  $\uad$, we also have $\bar u \in \uad$.

  We turn to $(y_k)$: The boundedness of $(u_k)$ implies that $(\xi_{y_k})$ must be
  bounded in $\Le^{\infty}(0,\sT;\cE)$ by the energy
  bound~\eqref{eq:a-priori-estimate-energy}. This gives a weakly-$\ast$ convergent
  subsequence of $(\xi_{y_k})$ with the weak-$\ast$ limit denoted by
  $\bar y \in \Le^{\infty}(0,\sT;\cE)$. We need to show that $\bar y = y_{\bar u}$.

  Looking at the definition of a Shatah-Struwe solution
  \begin{multline}
    -\int_0^\sT \bip{\partial_t y_k(t),\partial_t \varphi(t)} \dd t + \int_0^\sT
    \bip{\nabla y_k(t),\nabla\varphi(t)} \dd t + \int_0^\sT
    \bip{y_k^5(t),\varphi(t)}\dd t \\ = \int_0^\sT \bip{u_k(t),\varphi(t)} \dd t
    \qquad \text{for all}~\varphi \in \C_c^\infty\bigl((0,\sT) \times
    \Omega\bigr),\label{eq:shatah-struwe-definition}
  \end{multline}
  we observe that the linear terms are already dealt with. It remains to show that in
  fact $\bar y \in \Le^4(0,\sT;\Le^{12}(\Omega))$, that $\xi_{\bar y}(0) = \xi_0$,
  and that the nonlinear term involving $y_k^5$ converges to the correct one
  involving $\bar y^5$.

  For the latter, let $5 \leq p < 6$. Then boundedness of $(\xi_{y_k})$ in
  $\Le^{\infty}(0,\sT;\cE)$ and compactness of the embedding
  $\H^1_0(\Omega) \embedsc \Le^p(\Omega)$ implies that $(y_k)$ is in fact a
  pre\emph{compact} set in $\C([0,\sT];\Le^p(\Omega))$,
  see~\cite[Cor.~4]{S87}. Accordingly, there is yet another subsequence of $(y_k)$
  denoted by the same name such that $(y_k)$ tends to $\bar y$ in that space. This
  further means that $(y_k^5)$ converges to $\bar y^5$ in
  $\C([0,\sT];\Le^{p/5}(\Omega))$.

  The next step is to show that $\xi_{\bar y}(0) = \xi_0$.  We already know that
  $(\xi_{y_k})$ is bounded in
  $\Le^\infty(0,\sT;\cE)$. Further,~\eqref{eq:weak-solution-second-deriv} and
  boundedness of $(u_k)$ in $\Le^1(0,\sT;\Le^2(\Omega))$ show that
  $(\partial_t^2 y_k)$ is bounded in $\Le^1(0,\sT;\H^{-1}(\Omega))$. Using
  again~\cite[Cor.~4]{S87}, we infer that $(\xi_{y_k})$ is precompact in
  $\C([0,\sT];\Le^2(\Omega) \times \H^{-1}(\Omega))$ and thus admits a convergent
  subsequence in that space, with the same name and the limit $\bar y$. Thus, in
  particular,
  \begin{equation*}
    \xi_0 = \xi_{y_k}(0) \longrightarrow \xi_{\bar y}(0) \quad
    \text{in}~\H^{-1}(\Omega) \times \Le^2(\Omega) 
  \end{equation*}
  and we obtain $\xi_{\bar y}(0) = \xi_0$, which by assumption even lies in $\cE$.

  We are now prepared to taking limits in~\eqref{eq:shatah-struwe-definition} and
  obtain that 
  $\xi_{\bar y} \in \Le^\infty(0,\sT;\cE)$ satisfies $\xi_{\bar y}(0) = \xi_0$ and
  \begin{multline}\label{eq:weak-solution-satisfy}
    -\int_0^\sT \bip{\partial_t \bar y(t),\partial_t \varphi(t)} \dd t + \int_0^\sT
    \bip{\nabla \bar y(t),\nabla\varphi(t)} \dd t + \int_0^\sT \bip{\bar
      y^5(t),\varphi(t)}\dd t \\ = \int_0^\sT \bip{\bar u(t),\varphi(t)} \dd t \qquad
    \text{for all}~\varphi \in \C_c^\infty\bigl((0,\sT) \times \Omega\bigr).
  \end{multline}

  It only remains to show that $\bar y \in \Le^4(0,\sT;\Le^{12}(\Omega))$. From
  $\gamma > 0$ we infer that $(y_k)$ is also bounded in
  $\Le^4(0,\sT;\Le^{12}(\Omega))$. Hence, there exists a weakly convergent
  subsequence with the limit $\hat y$ in
  $\Le^4(0,\sT;\Le^{12}(\Omega))$. This means that we have
  \begin{multline*}
    \int_0^\sT \bip{\bar y(t),\phi(t)} \dd t = \int_0^\sT \bip{\hat y(t),\phi(t)} \dd t \\
    \text{for all}~\phi \in \Le^1(0,\sT;\Le^{6/5}(\Omega)) \cap
    \Le^{4/3}(0,\sT;\Le^{12/11}(\Omega))
  \end{multline*}
  and thus
  \begin{equation*}
    \bar y = \hat y \quad \text{in} \quad \Le^\infty(0,\sT;\Le^6(\Omega)) +
    \Le^4(0,\sT;\Le^{12}(\Omega)) \embeds \Le^4(0,\sT;\Le^6(\Omega)).
  \end{equation*}
  But then $\bar y = \hat y$ almost everywhere in $(0,\sT) \times \Omega$ and in fact
  $\bar y \in \Le^4(0,\sT;\Le^{12}(\Omega))$.

  Since $\bar y$ has now been shown to be a Shatah-Struwe solution
  to~\eqref{eq:main-equation}, uniqueness of such solutions as established in
  Corollary~\ref{cor:mild-l4l12-unique} then finally implies that indeed
  $\bar y = y_{\bar u}$.

  It is now standard to use the convergences $(y_k) \to \bar y$ in
  $\C([0,\sT];\Le^2(\Omega))$ and $(y_k) \wto \bar y$ in
  $\Le^4(0,\sT;\Le^{12}(\Omega))$ as well as $(u_k) \wto \bar u$ in
  $\Le^1(0,\sT;\Le^2(\Omega))$ together with lower semicontinuity of norms in the
  objective $\ell$ to infer that
  \begin{equation*}
    \ell(\bar y,\bar u) = \inf_{u\in\uad}\ell(y_u,u).
  \end{equation*}
  This shows that there indeed exists a globally optimal control to~\eqref{eq:OCP}.
\end{proof}


From the proof of Theorem~\ref{thm:existence-OC} we obtain the following
auxiliary result for the case $\gamma = 0$, so the case where there is no additional
$\Le^4(0,\sT;\Le^{12}(\Omega))$ norm term in the objective. It underlines the role of
this term in upgrading weak solutions to (unique) mild solutions:

  \begin{proposition}
    Let $\beta_2 > 0$ or $\omega \in \Le^1(0,\sT)$. Then the optimal control
    problem~\eqref{eq:OCP} admits at least one globally optimal pair
    $(\bar y,\bar u)$ with $\bar u \in \uad$ such that $\bar y$ is a---possibly
    non-unique---weak solution to~\eqref{eq:main-equation} with right-hand side
    $\bar u$. That means we have $\xi_{\bar y}(0) = \xi_0$ and
    $\xi_{\bar y} \in \Le^\infty([0,\sT];\cE)$, and $\bar y$ satisfies the weak
    formulation~\eqref{eq:weak-solution-satisfy}.
  \end{proposition}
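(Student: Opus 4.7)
The plan is to mirror the proof of Theorem~\ref{thm:existence-OC} almost verbatim, stopping short at the single step that genuinely required $\gamma > 0$. I would take an infimizing sequence $(u_k) \subseteq \uad$ for the reduced objective $u \mapsto \ell(y_u, u)$ and set $y_k \defn y_{u_k}$, where each $y_{u_k}$ is the unique global Shatah-Struwe solution supplied by Theorem~\ref{thm:global-existence}. In particular each $y_k$ is a weak solution, so the pairs $(y_k, u_k)$ are admissible in the weaker sense under consideration.

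Under either hypothesis ($\beta_2 > 0$ or $\omega \in \Le^1(0, \sT)$), the sequence $(u_k)$ is weakly precompact in $\Le^r(0, \sT; \Le^2(\Omega))$ for the appropriate $r \in \{1, 2\}$, so along a subsequence $u_k \wto \bar u$ with $\bar u \in \uad$ by weak closedness. The energy estimate~\eqref{eq:a-priori-estimate-energy} then delivers a uniform bound on $(\xi_{y_k})$ in $\Le^\infty(0, \sT; \cE)$, and the same compactness arguments used in Theorem~\ref{thm:existence-OC} (Simon's compactness theorem combined with $\H^1_0(\Omega) \embedsc \Le^p(\Omega)$ for $p < 6$ and the bound on $\partial_t^2 y_k$ in $\Le^1(0,\sT;\H^{-1}(\Omega))$ coming from the weak formulation) yield a limit $\bar y$ with $\xi_{\bar y} \in \Le^\infty(0, \sT; \cE)$, strong convergence $y_k \to \bar y$ in $\C([0, \sT]; \Le^p(\Omega))$ for every $p \in [5, 6)$, and $\xi_{y_k}(0) \to \xi_{\bar y}(0)$ in $\Le^2(\Omega) \times \H^{-1}(\Omega)$, which pins down $\xi_{\bar y}(0) = \xi_0$.

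Passing to the limit in the Shatah-Struwe identity for $(y_k, u_k)$ is then routine: the linear terms pass via weak-$\ast$ convergence, while the strong convergence $y_k \to \bar y$ in $\C([0,\sT]; \Le^p(\Omega))$ with $p \in [5, 6)$ gives $y_k^5 \to \bar y^5$ in $\C([0,\sT]; \Le^{p/5}(\Omega))$, which is amply sufficient against test functions in $\C_c^\infty((0, \sT) \times \Omega)$. Hence $\bar y$ satisfies~\eqref{eq:weak-solution-satisfy} together with the initial condition, so $(\bar y, \bar u)$ is admissible in the weak sense. The closing step is lower semicontinuity of $\ell$: the terminal term $\frac12\|y(\sT) - y_d\|_{\Le^2(\Omega)}^2$ is continuous along the strong $\C([0, \sT]; \Le^2(\Omega))$ convergence of $y_k$, and the control-cost norms are weakly lower semicontinuous, so $\ell(\bar y, \bar u) \leq \liminf_k \ell(y_k, u_k) = \inf$.

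The only step that breaks is exactly the one that genuinely needed $\gamma > 0$: extracting a limit in $\Le^4(0, \sT; \Le^{12}(\Omega))$ from a uniform $\Le^4\Le^{12}$ bound on $(y_k)$ supplied by the $\gamma$-term in the cost, and thereby invoking Corollary~\ref{cor:mild-l4l12-unique} to identify $\bar y$ with the Shatah-Struwe solution $y_{\bar u}$. With $\gamma = 0$ no such bound is at hand and I see no obvious substitute. This is the main obstacle, and the proposition acknowledges it by asserting only that $\bar y$ is a, possibly non-unique, weak solution; the role of the $\gamma$-term is precisely to lift such a weak limit into the $\LeLe4{12}$ class that makes it a Shatah-Struwe solution.
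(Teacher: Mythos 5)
Your proposal is correct and is exactly the argument the paper intends: the paper states this proposition without a separate proof, remarking only that it is obtained from the proof of Theorem~\ref{thm:existence-OC} by dropping the single step that needs $\gamma>0$ (the uniform $\Le^4(0,\sT;\Le^{12}(\Omega))$ bound on $(y_k)$ and the resulting identification $\bar y = y_{\bar u}$ via Corollary~\ref{cor:mild-l4l12-unique}), which is precisely the step you isolate. Nothing further is needed.
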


  \subsection{Optimality conditions}

  Let us set
  \begin{equation*}
    \cY \defn \Bigl\{y \colon \xi_y \in \C([0,\sT];\cE)\Bigr\}
    \quad \text{and} \quad \cY_+ \defn  \Le^4(0,\sT;\Le^{12}(\Omega)) \cap \cY
  \end{equation*}
  with
  \begin{equation*}
    \|y\|_\cY \defn \|\xi_y\|_{\C([0,\sT];\cE)} \quad \text{and}
    \quad \|y\|_{\cY_+} \defn \|y\|_{\cY} + \|y\|_{ \Le^4(0,\sT;\Le^{12}(\Omega))}.
  \end{equation*}

  As a first step towards optimality conditions, we show that the control-to-state
  mapping $u\mapsto y_u$ is twice continuously differentiable from
  $\Le^1(0,\sT;\Le^2(\Omega))$ to $\cY_+$. We recall the definition of $y_u$ in terms
  of the variation-of-constants (or Duhamel) formula
  in~\eqref{eq:variation-of-constants} and define the mapping
  \begin{equation*}
    e \colon \cY_+ \times \Le^1(0,\sT;\Le^2(\Omega)) \to \C([0,\sT];\cE)
  \end{equation*}
  as follows:
  \begin{equation*}
    \bigl[e(y,u)\bigr](t) \defn e^{\cA t}\xi_0 + \int_0^t e^{\cA (t-s)} \begin{pmatrix}
      0 \\ u(s) - y^5(s)
    \end{pmatrix}
    \dd s - \xi_y(t).
  \end{equation*}

  By construction it is clear that $e(y_u,u) = 0$ and of course it is our first goal
  to use the implicit function theorem to show the following:

  \begin{theorem}
    \label{thm:control-to-state-diff}
    The control-to-state operator $\cS \colon u \mapsto y_u$ is twice continuously
    differentiable from $\Le^1(0,\sT;\Le^2(\Omega))$ to $\cY_+$. Its derivative
    $\cS'(\bar u)h$ in $\bar u$ in direction $h \in \Le^1(0,\sT;\Le^2(\Omega))$ is
    given by the mild solution $z_h \in \cY_+$ of
    \begin{equation*}
      \begin{aligned}
        \partial_t^2 z - \Delta z + 5\bar y^4_{\bar u}z  & = h && \text{in}~(0,T) \times \Omega, \\
        z & = 0 && \text{on}~(0,T) \times \partial\Omega, \\
        \bigl(z(0),\partial_t z(0)\bigr) & = (0,0) && \text{in}~\Omega,
      \end{aligned}
    \end{equation*}
    on $[0,\sT]$, i.e., $z_h$ satisfies
    \begin{equation*}
      \xi_z(t) = \int_0^t e^{\cA (t-s)} \begin{pmatrix}
        0 \\ h(s) - 5y_{\bar u}^4(s)z(s)
      \end{pmatrix}
      \dd s
    \end{equation*}
    for all $t \in [0,\sT]$. Its second derivative $\cS''(\bar u)(h_1,h_2)$ in
    $\bar u$ in directions $h_1,h_2 \in \Le^1(0,\sT;\Le^2(\Omega))$ is given by the
    mild solution $z_{h_1,h_2} \in \cY_+$ of
    \begin{equation*}
      \begin{aligned}
        \partial_t^2 z - \Delta z + 5\bar y_{\bar u}^4 z & = -20\bar y^3_{\bar u}z_{h_1}z_{h_2} && \text{in}~(0,T) \times \Omega, \\
        z & = 0 && \text{on}~(0,T) \times \partial\Omega, \\
        \bigl(z(0),\partial_t z(0)\bigr) & = (0,0) && \text{in}~\Omega,
      \end{aligned}
    \end{equation*}    
    on $[0,\sT]$, where $z_{h_i} = \cS'(\bar u)h_i$ for $i=1,2$, i.e., $z_{h_1,h_2}$
    satisfies
    \begin{equation*}
      \xi_z(t) = \int_0^t e^{\cA (t-s)} \begin{pmatrix}
        0 \\  -20\bar y^3_{\bar u}(s)z_{h_1}(s)z_{h_2}(s)- 5y_{\bar u}^4(s)z(s)
      \end{pmatrix}
      \dd s
    \end{equation*}
    for all $t \in [0,\sT]$.
  \end{theorem}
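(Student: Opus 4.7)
The strategy is to apply the implicit function theorem to the equation $\Psi(y,u) \defn y - \Phi(y,u) = 0$, where $\Phi$ is the first-component variation-of-constants map
\begin{equation*}
  \Phi(y,u)(t) \defn \pi_1\left( e^{\cA t}\xi_0 + \int_0^t e^{\cA(t-s)} \begin{pmatrix} 0 \\ u(s) - y^5(s) \end{pmatrix} \dd s \right),
\end{equation*}
so that $\cS(u) = y$ is characterized by $\Psi(y,u) = 0$. By the Strichartz estimate~\eqref{eq:strichartz-estimate} combined with the interpolation~\eqref{eq:interpolation-l1l2}, $\Phi$ maps $\cY_+ \times \Le^1(0,\sT;\Le^2(\Omega))$ into $\cY_+$, hence so does $\Psi$. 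Once I show that $\Psi$ is of class $C^2$ and that $\partial_y \Psi(y_{\bar u},\bar u) = \id - \partial_y\Phi(y_{\bar u},\bar u)$ is a Banach space isomorphism on $\cY_+$, the implicit function theorem yields $\cS \in C^2$ locally near any $\bar u$, and implicit differentiation of $\Psi(\cS(u),u) = 0$ produces the stated formulas.

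To verify that $\Phi$ is twice continuously Fr\'echet differentiable, only the nonlinearity $y \mapsto y^5$ requires work. Its first and second derivatives as a map from $\cY_+$ into $\Le^1(0,\sT;\Le^2(\Omega))$ are given formally by $z \mapsto 5y^4 z$ and $(z_1,z_2) \mapsto 20y^3 z_1 z_2$. Boundedness of the first follows from the H\"older estimate
\begin{equation*}
  \|y^4 z\|_{\Le^1(0,\sT;\Le^2(\Omega))} \leq \|y\|_{\Le^4(0,\sT;\Le^{12}(\Omega))}^4 \|z\|_{\Le^\infty(0,\sT;\Le^6(\Omega))},
\end{equation*}
combined with the Sobolev embedding that controls the second factor by $\|z\|_\cY$. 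For the trilinear second derivative, interpolating between $\Le^\infty(0,\sT;\Le^6(\Omega))$ and $\Le^4(0,\sT;\Le^{12}(\Omega))$ to $\Le^8(0,\sT;\Le^8(\Omega))$ and applying H\"older in space-time yields $\|y^3 z_1 z_2\|_{\Le^1(0,\sT;\Le^2(\Omega))} \lesssim \|y\|_{\cY_+}^3 \|z_1\|_{\cY_+} \|z_2\|_{\cY_+}$. Standard difference-quotient arguments with the same inequalities provide continuity of both derivatives in $y$, and post-composition with the variation-of-constants integral preserves the regularity by~\eqref{eq:strichartz-estimate}.

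The principal obstacle is the invertibility of $\id - \partial_y\Phi(y_{\bar u},\bar u)$ on $\cY_+$, where
\begin{equation*}
  \partial_y \Phi(y_{\bar u},\bar u)z = \pi_1 \int_0^t e^{\cA(t-s)} \begin{pmatrix} 0 \\ -5 y_{\bar u}^4(s) z(s) \end{pmatrix} \dd s.
\end{equation*}
The operator norm of $\partial_y\Phi$ is controlled only by $\|y_{\bar u}\|_{\Le^4(0,\sT;\Le^{12}(\Omega))}^4$, which is globally finite but not small in general, so a naive Neumann series argument on $[0,\sT]$ fails. I therefore split $[0,\sT]$ into finitely many subintervals $[t_j,t_{j+1}]$ on each of which $\|y_{\bar u}\|_{\Le^4(t_j,t_{j+1};\Le^{12}(\Omega))}$ is sufficiently small that, by the Strichartz estimate, the localized $\partial_y\Phi$ is a strict contraction in the corresponding local $\cY_+$-norm. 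Hence $\id - \partial_y\Phi$ is invertible on each subinterval, and patching by propagating $\cE$-endpoint values from subinterval to subinterval via the energy estimate~\eqref{eq:energy-estimate} assembles a bounded global inverse on $\cY_+$.

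With these two ingredients in place, the implicit function theorem provides $\cS \in C^2$ together with the derivative formulas. Differentiating $\Psi(\cS(u),u) = 0$ at $\bar u$ in a direction $h$ yields
\begin{equation*}
  (\id - \partial_y\Phi(y_{\bar u},\bar u))\,\cS'(\bar u) h = \partial_u\Phi(y_{\bar u},\bar u) h,
\end{equation*}
and since $\partial_u\Phi(y_{\bar u},\bar u) h$ is the first coordinate of the mild solution of the free linear wave equation with source $h$ and zero initial data, rearranging shows that $z_h \defn \cS'(\bar u) h$ is precisely the mild solution of the linearized equation stated in the theorem. Differentiating once more in a direction $h_2$ and using that the second derivative of $\Phi$ in $y$ produces, inside the variation-of-constants integral, the bilinear source $-20\,y_{\bar u}^3 z_{h_1} z_{h_2}$, yields the stated equation for $\cS''(\bar u)(h_1,h_2) = z_{h_1,h_2}$ in exactly the same manner.
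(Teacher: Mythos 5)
Your proposal is correct and follows essentially the same route as the paper: an implicit function theorem argument whose two pillars are (i) twice continuous differentiability of the substitution map $y \mapsto y^5$ from $\cY_+$ into $\Le^1(0,\sT;\Le^2(\Omega))$ and (ii) invertibility of the linearized residual obtained by partitioning $[0,\sT]$ into subintervals on which $\|y_{\bar u}\|_{\Le^4(t_j,t_{j+1};\Le^{12}(\Omega))}$ is small, applying a contraction there, and gluing via the propagated $\cE$-data. The only cosmetic differences are that you phrase the fixed-point equation through the first component in $\cY_+$ rather than the full state residual with values in $\C([0,\sT];\cE)$, and that you verify the $C^2$ property of the Nemytskii operator by explicit H\"older/interpolation estimates where the paper cites the abstract differentiability results of~\cite{GKT92}.
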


\begin{proof}
  We begin by showing that $e$ is twice continuously differentiable. Clearly,
  $y \mapsto \xi_y$ is a continuous linear mapping from $\cY$ into $\C([0,\sT];\cE)$,
  just as
  \begin{equation}
    v \mapsto \left[t\mapsto\int_0^t e^{\cA (t-s)} \begin{pmatrix}
        0 \\ v(s)
      \end{pmatrix}
      \dd s\right]\label{eq:inhomogeneous-solution-cont-dependence}
  \end{equation}
  is a continuous linear mapping from $\Le^1(0,\sT;\Le^2(\Omega))$ to
  $\C([0,\sT];\cE)$. It is thus sufficient to show that $y \mapsto y^5$ is twice
  continuously differentiable considered as a mapping from $\cY_+$ to
  $\Le^1(0,\sT;\Le^2(\Omega))$. This, however, follows immediately from the
  interpolation inequality~\eqref{eq:interpolation-l5l10}, which implies that
  \begin{equation}\label{eq:solution-space-embed-l5l10}
    \cY_+ \embeds \Le^4(0,\sT;\Le^{12}(\Omega)) \cap \Le^\infty(0,\sT;\Le^6(\Omega)) \embeds \Le^5(0,\sT;\Le^{10}(\Omega)),
  \end{equation}
  together with twice continuous differentiability of the Nemytskii operator induced
  by the real function $x \mapsto x^5$ between $\Le^5(0,\sT;\Le^{10}(\Omega))$ and
  $\Le^1(0,\sT;\Le^2(\Omega))$, cf.~\cite[Thms.~7\&9]{GKT92}. Altogether $e$ is twice
  continuously differentiable.
  
  In order to use the implicit function theorem, it remains to show that $e_y(y_u,u)$
  is continuously invertible as a linear operator between $\cY_+$ and
  $\C([0,\sT];\cE)$ for every $u \in \Le^1(0,\sT;\Le^2(\Omega))$. From the foregoing
  considerations and the chain rule, we obtain
  \begin{equation*}
    \bigl[e_y(y_u,u)z\bigr](t) =
    -\int_0^t e^{\cA(t-s)}
    \begin{pmatrix}
      0 \\ 5 y_u^4(s) z(s)
    \end{pmatrix} \dd s - \xi_z(t).
  \end{equation*}
  Thanks to the open mapping theorem, it will be sufficient to prove that for every
  $F \in \C([0,\sT];\cE)$ there is a unique $z \in \cY_+$ such that
  $e_y(y_u,u)z = F$. We can again use a fixed point theorem to show that this is the
  case: Choose a partition $0 = t_0 < t_1 <\dots < t_n = \sT$ such that
  \begin{equation*}
    5  \bigl(C_e(\sT) + C_s(\sT)\bigr) C_{\text{em}} \|y_u\|_{\Le^4(t_i,t_{i+1};\Le^{12}(\Omega))}^4< \frac12 \quad
    \text{for all}~i=0,\dots,n-1,
  \end{equation*}
  where $C_{\text{em}}$ is the embedding constant of
  $\H^1_0(\Omega) \embeds \Le^6(\Omega)$; for the constants $C_s,C_e$, see
  Lemma~\ref{lem:estimates}. Let $\cY(t_i,t_{i+1})$ and $\cY_+(t_i,t_{i+1})$ be the
  spaces $\cY$ and $\cY_+$ on the interval $[t_i,t_{i+1}]$, \emph{mutatis
    mutandis}. Let further $\xi^i \in \cE$ for $i=0,\dots,n-1$ be given and consider
  the mappings $\cT_i \colon \cY(t_i,t_{i+1}) \to \cY(t_i,t_{i+1})$ defined by
  $\cT_i h = z$ such that
  \begin{equation*}
    \xi_z(t) =  e^{\cA (t-t_i)}\xi^i - \int_{t_i}^t e^{\cA(t-s)}
    \begin{pmatrix}
      0 \\ 5 y_u^4(s) h(s)
    \end{pmatrix} \dd s - F(t) \quad \text{for}~t \in [t_i,t_{i+1}].
  \end{equation*}
  Then, by the estimates~\eqref{eq:energy-estimate}
  and~\eqref{eq:strichartz-estimate}, semigroup properties and H\"older's inequality,
  \begin{multline*}
    \bigl\|\cT_i h_1 - \cT_i h_2\bigr\|_{\cY_+(t_i,t_{i+1})} \\ \leq 5\bigl(C_e(\sT)
    + C_s(\sT)\bigr) C_{\text{em}} \|y_u\|_{\Le^4(t_i,t_{i+1};\Le^{12}(\Omega))}^4
    \bigl\|h_1-h_2\bigr\|_{\C([t_i,t_{i+1}];\H^1_0(\Omega))}.
  \end{multline*}
  The choice of the partition $(t_i)$ and Banach's fixed point theorem tell us that
  every mapping $\cT_i$ possesses a unique fixed point $z_i \in \cY_+(t_i,t_{i+1})$,
  still depending on $\xi^i$. If we iteratively choose $\xi^0 = 0$ and
  $\xi^i = z_{i-1}(t_i)$ for $i = 1,\dots,n-1$ and glue together the resulting
  functions $z_i$ to a function $z \in \cY_+$, then we obtain
  \begin{equation}
    \xi_z(t) +  \int_0^t e^{\cA(t-s)}
    \begin{pmatrix}
      0 \\ 5 y_u^4(s) z(s)
    \end{pmatrix} \dd s = - F(t) \quad \text{for
      all}~t\in[0,\sT],\label{eq:formula-ey-derivative}
  \end{equation}
  i.e., $z \in \cY_+$ satisfies $e_y(y_u,u)z = F$. Thus,
  $e_y(y_u,u)^{-1} \in \cL(\C([0,\sT];\cE);\cY_+)$.

  Finally, the expression for the derivative $\cS'(\bar u)$ comes from the well known
  formula
  \begin{equation*}
    \cS'(\bar u)h = -e_y(\cS(\bar u),\bar u)^{-1} e_u(\cS(\bar u),\bar u)h,
  \end{equation*}
  the observation that $e_u(\cS(\bar u),\bar u)$ is given exactly
  by~\eqref{eq:inhomogeneous-solution-cont-dependence}, and plugging this
  into~\eqref{eq:formula-ey-derivative} for $F$. For the second derivative
  $\cS''(\bar u)$, we take another derivative in the foregoing expression and use
  \begin{equation*}
    \bigl[e_{yy}(y_u,u)(z_1,z_2)\bigr](t) = -\int_0^t e^{\cA(t-s)}
    \begin{pmatrix}
      0 \\ 20 y_u^3(s) z_1(s) z_2(s)
    \end{pmatrix} \dd s.
  \end{equation*}
  This gives the claim.
\end{proof}

As usual, the control-to-state operator $\cS$ allows us to define the \emph{reduced
  problem} which we consider from now on:

\begin{equation}
  \min_{u \in \uad}\ell_r(u), \tag{ROCP} \label{eq:ROCP}
\end{equation}
where we set $\ell_r(u) \defn \ell(y_u,u)$. We decompose the objective function
$\ell_r$ further into
\begin{equation}\label{eq:red-obj-decompose}
  \ell_r(u) \defn F(u) + \beta_1 \sj(u)
\end{equation}
with
\begin{equation*}
  F(u) = \frac12\|y_u(\sT) - y_d\|_{\Le^2(\Omega)}^2 +
  \frac\gamma4\|y_u\|_{\Le^4(0,\sT;\Le^{12}(\Omega))}^4  
  + \frac{\beta_2}2 \|u\|_{\Le^2(0,\sT;\Le^2(\Omega))}^2, 
\end{equation*}
which is smooth as we see below, and the
non-differentiable part
\begin{equation*}
  \sj(u) \defn  \|u\|_{\Le^1(0,\sT;\Le^2(\Omega))}.
\end{equation*}
For the following derivation of necessary and sufficient optimality conditions, we use several
ideas and results from~\cite{CHW17}. The main difference between this work and the
present one is that the constraints on $u$ in~\cite{CHW17} are classical box constraints.

We first quote the following result; it is a
characterization of the subdifferential $\partial\sj$ and a formula for the directional derivative
of $\sj$:

\begin{proposition}[{\cite[Prop.~3.8]{CHW17}}]\label{prop:L1L2-subdiff-character}
  Let $u \in \uad$ and $\lambda \in \Le^\infty(0,\sT;\Le^2(\Omega))$. Then the
  following equivalence holds true:
  \begin{equation*}
    \lambda \in\partial\sj(u) \quad \iff \quad\text{f.a.a.}~t\in[0,\sT]\colon\begin{cases}
      \lambda(t) = \frac{u(t)}{\|u(t)\|_{\Le^2(\Omega)}} & \text{if}~\|u(t)\|_{\Le^2(\Omega)} \neq 0, \\[0.5em]
      \|\lambda(t)\|_{\Le^2(\Omega)} \leq 1 &  \text{if}~\|u(t)\|_{\Le^2(\Omega)} = 0.
    \end{cases}
  \end{equation*}
  Moreover, the directional derivative of $\sj$ in $u \in \Le^1(0,\sT;\Le^2(\Omega))$
  exists in every direction $v \in \Le^1(0,\sT;\Le^2(\Omega))$ and is given by
  \begin{equation*}
    \sj'(u;v) = \int_{\bigl[\|u\|_{\Le^2(\Omega)} = 0\bigr]} \|v(t)\|_{\Le^2(\Omega)} \dd t +
    \int_{\bigl[\|u\|_{\Le^2(\Omega)} \neq 0\bigr]} 
    \frac{\bip{v(t),u(t)}}{\|u(t)\|_{\Le^2(\Omega)}} \dd t.
  \end{equation*}
\end{proposition}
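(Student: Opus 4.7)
My plan is to prove the directional derivative formula first and then deduce the subdifferential characterization from it via the standard convex-analysis duality between directional derivatives and subgradients. For the formula, I start pointwise from the well-known observation that for $x,h \in \Le^2(\Omega)$ the real function $\tau \mapsto \|x+\tau h\|_{\Le^2(\Omega)}$ is convex, so its right-sided difference quotient at $\tau=0$ decreases to the one-sided G\^ateaux derivative, which equals $\bip{x/\|x\|_{\Le^2(\Omega)},h}$ when $x \neq 0$ and $\|h\|_{\Le^2(\Omega)}$ when $x = 0$. The reverse triangle inequality yields $\tau^{-1}\bigl|\|u(t)+\tau v(t)\|_{\Le^2(\Omega)} - \|u(t)\|_{\Le^2(\Omega)}\bigr| \leq \|v(t)\|_{\Le^2(\Omega)}$ for all $\tau>0$, so dominated convergence allows to interchange the limit $\tau\searrow 0^+$ with the integration in $t$ and produces the stated formula. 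Measurability is not an issue: the set $[\|u\|_{\Le^2(\Omega)} = 0]$ is measurable since $t \mapsto \|u(t)\|_{\Le^2(\Omega)}$ is, and $t \mapsto u(t)/\|u(t)\|_{\Le^2(\Omega)}$ is strongly measurable on its complement by composition.

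For the subdifferential characterization I would invoke that for the convex continuous function $\sj$ on $\Le^1(0,\sT;\Le^2(\Omega))$, paired against $\Le^\infty(0,\sT;\Le^2(\Omega))$, an element $\lambda$ belongs to $\partial\sj(u)$ if and only if $\int_0^\sT \bip{\lambda(t),v(t)}\dd t \leq \sj'(u;v)$ for all $v \in \Le^1(0,\sT;\Le^2(\Omega))$. The ``if'' direction of the equivalence is then immediate: with $\lambda$ of the claimed pointwise form, Cauchy-Schwarz together with the constraint $\|\lambda(t)\|_{\Le^2(\Omega)} \leq 1$ on the zero-set of $u$ give $\bip{\lambda(t),v(t)} \leq \|v(t)\|_{\Le^2(\Omega)}$ there, while on the complement $\bip{\lambda(t),v(t)} = \bip{u(t)/\|u(t)\|_{\Le^2(\Omega)},v(t)}$ directly; integrating and comparing with the directional-derivative formula yields the subgradient inequality.

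For the converse ``only if'' direction I would localize by testing against $v = w\chi_A$ for measurable $A \subseteq (0,\sT)$ and $w$ ranging over a countable dense subset of $\Le^2(\Omega)$. On sets $A \subseteq [\|u\|_{\Le^2(\Omega)} \neq 0]$, testing with both signs gives $\int_A \bip{\lambda(t) - u(t)/\|u(t)\|_{\Le^2(\Omega)},w}\dd t = 0$, and varying $A$ and $w$ pins down $\lambda(t) = u(t)/\|u(t)\|_{\Le^2(\Omega)}$ almost everywhere on that set. On $A \subseteq [\|u\|_{\Le^2(\Omega)} = 0]$ the inequality reduces to $\int_A \bip{\lambda(t),w}\dd t \leq \int_A \|w\|_{\Le^2(\Omega)}\dd t$, from which $\|\lambda(t)\|_{\Le^2(\Omega)} \leq 1$ almost everywhere follows by choosing $w$ appropriately. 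The main obstacle I anticipate is precisely this measurable-selection step, where separability of $\Le^2(\Omega)$ must be used to move from countably many test directions to an a.e.\ pointwise assertion; the remainder is mild convex-analysis bookkeeping.
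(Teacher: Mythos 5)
Your proof is correct. Note that the paper does not prove this proposition at all --- it is quoted verbatim from \cite[Prop.~3.8]{CHW17} --- so there is no in-paper argument to compare against; your route (monotone difference quotients plus dominated convergence for $\sj'(u;\cdot)$, the support-function characterization $\lambda\in\partial\sj(u)\iff\int_0^\sT\bip{\lambda(t),v(t)}\dd t\le\sj'(u;v)$ for all $v$, and localization with $v=\pm w\chi_A$ over a countable dense set of $w\in\Le^2(\Omega)$) is the standard and complete one, with the measurable-selection step you flag handled correctly by separability of $\Le^2(\Omega)$.
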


We next establish that $F$ is twice continuously differentiable. For a concise form
of its derivatives, it will be useful to
define the adjoint state:

\begin{definition}[Adjoint state]
  \label{def:adjoint-state}
  Given $\bar u \in \uad$, we denote by $\bar p$ the \emph{adjoint state} defined by
  \begin{equation*}
    \bar p \defn \cS'(\bar u)^\ast\bigl(\delta_\sT^\ast(y_{\bar u}(\sT) - y_d) + \gamma\psi_{\bar
      u}\bigr) \in \Le^\infty(0,\sT;\Le^2(\Omega)).
  \end{equation*}
  Here $\psi_{\bar u} \in \Le^{4/3}(0,\sT;\Le^{12/11}(\Omega))$ is given by
  \begin{equation}\label{eq:l4l12-derivative}
    \psi_{\bar u}(t) \defn \|y_{\bar u}(t)\|_{\Le^{12}(\Omega)}^{-8}|y_{\bar u}(t)|^{10}y_{\bar
      u}(t)
  \end{equation}
  and $\delta_\sT^\ast$ is the adjoint operator of the (continuous linear) point
  evaluation $\delta_\sT$ from $\C([0,\sT];\Le^2(\Omega))$ to $\Le^2(\Omega)$.
\end{definition}

The function $\psi_{\bar u}$ is the $\Le^2$-gradient of
$\Psi(y) \defn \frac14\|y\|_{\Le^4(0,\sT;\Le^{12}(\Omega)}^4$ in
$y_{\bar u}$, that is,
\begin{equation}\label{eq:l4l12-power-deriv}
  \Psi'(y_{\bar u})h = \int_0^\sT \bip{\psi_{\bar u}(t),h(t)}
  \dd t \quad \text{for all}~h\in \Le^4\bigl(0,\sT;\Le^{12}(\Omega)\bigr).
\end{equation}
 This is shown in Corollary~\ref{cor:psi-diff} in the
 appendix. This corollary is also important in the next and final
 result for this preparationary subsection:
 
\begin{lemma}
  \label{lem:f-c2}
  The first summand $F$ of the reduced objective function $\ell_r$ as
  in~\eqref{eq:red-obj-decompose} is twice continuously differentiable
  from $\Le^r(0,\sT;\Le^2(\Omega))$ to $\R$. Its derivatives in
  $\bar u$ are given by
  \begin{equation*}
    F'(\bar u)v = \int_0^\sT \bip{\bar p(t) + \beta_2\bar u(t),v(t)} \dd t
  \end{equation*}
  and
  \begin{multline*}
    F''(\bar u)v^2 = \bigl\|z_v(\sT)\bigr\|_{\Le^2(\Omega)}^2 - \int_0^\sT \bip{\bar p(t),20
      y_{\bar u}^3(t) z_v^2(t)} \dd t \\ + \gamma\Psi''(y_{\bar u})z_v^2 + \beta_2 \bigl\|v\bigr\|_{\Le^2(0,\sT;\Le^2(\Omega))}^2
  \end{multline*}  
  for all $v \in \Le^r(0,\sT;\Le^2(\Omega))$, where $\bar p$ is the adjoint state and
  $z_v = \cS'(\bar u)v$.

  Moreover, the quadratic form $v \mapsto F''(\bar u)v^2$ is
  weakly lower semicontinuous.
\end{lemma}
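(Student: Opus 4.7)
The plan is to decompose $F = F_1 + F_2 + F_3$, with
\begin{equation*}
  F_1(u) \defn \tfrac12\|y_u(\sT) - y_d\|_{\Le^2(\Omega)}^2, \quad F_2(u) \defn \gamma\Psi(y_u), \quad F_3(u) \defn \tfrac{\beta_2}2\|u\|_{\Le^2(0,\sT;\Le^2(\Omega))}^2,
\end{equation*}
and establish twice continuous differentiability of each summand separately. The summand $F_3$ is immediate. For $F_1$, factor $F_1 = g \circ \delta_\sT \circ \cS$ with $g(y) \defn \tfrac12\|y - y_d\|_{\Le^2(\Omega)}^2$; the smoothness of the quadratic $g$, continuous linearity of the evaluation $\delta_\sT\colon\cY_+ \to \Le^2(\Omega)$, and the $C^2$-regularity of $\cS$ from Theorem~\ref{thm:control-to-state-diff} combine to give the claim. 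For $F_2 = \gamma\Psi\circ\cS$, I would invoke the $C^2$-regularity of $\Psi$ on $\Le^4(0,\sT;\Le^{12}(\Omega))$ from Corollary~\ref{cor:psi-diff} together with the embedding $\cY_+ \embeds \Le^4(0,\sT;\Le^{12}(\Omega))$.

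The derivative formulas then follow from the chain rule. Writing $z_v \defn \cS'(\bar u)v$, one first obtains
\begin{equation*}
  F'(\bar u)v = \bip{y_{\bar u}(\sT) - y_d, z_v(\sT)} + \gamma \Psi'(y_{\bar u})z_v + \beta_2 \int_0^\sT \bip{\bar u(t), v(t)} \dd t.
\end{equation*}
Using the representation~\eqref{eq:l4l12-power-deriv} of $\Psi'(y_{\bar u})$ and the definition of $\delta_\sT^\ast$, the first two summands collect into the duality pairing $\langle \delta_\sT^\ast(y_{\bar u}(\sT) - y_d) + \gamma\psi_{\bar u}, z_v\rangle$; transferring $\cS'(\bar u)$ to its adjoint yields $\int_0^\sT\bip{\bar p, v}\dd t$ by Definition~\ref{def:adjoint-state}. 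For $F''(\bar u)$, another differentiation produces four contributions: $\|z_v(\sT)\|_{\Le^2(\Omega)}^2$ from the curvature of $g$, $\gamma\Psi''(y_{\bar u})z_v^2$ from that of $\Psi$, $\beta_2\|v\|^2$ from $F_3$, and the cross term $\langle \delta_\sT^\ast(y_{\bar u}(\sT) - y_d) + \gamma\psi_{\bar u}, \cS''(\bar u)(v,v)\rangle$. Substituting the PDE for $\cS''(\bar u)(v,v)$ from Theorem~\ref{thm:control-to-state-diff} and applying the adjoint identity once more rewrites the cross term as $-\int_0^\sT\bip{\bar p, 20\bar y_{\bar u}^3 z_v^2}\dd t$.

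For weak lower semicontinuity of $v\mapsto F''(\bar u)v^2$, I would split the four summands into convex and nonconvex parts. The three nonnegative summands $\|z_v(\sT)\|^2$, $\gamma\Psi''(y_{\bar u})z_v^2$, and $\beta_2\|v\|^2$ are convex continuous quadratic forms in $v$---the form $\Psi''(y_{\bar u})$ being positive semidefinite by convexity of $\Psi$ and continuous by Corollary~\ref{cor:psi-diff}---and hence weakly lower semicontinuous. It remains to show that the single nonconvex summand $v \mapsto -\int_0^\sT\bip{\bar p, 20\bar y_{\bar u}^3 z_v^2}\dd t$ is sequentially weakly \emph{continuous}. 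Given $v_n \wto v$ in $\Le^r(0,\sT;\Le^2(\Omega))$, linearity of $\cS'(\bar u)$ gives $z_{v_n} \wto z_v$ in $\cY_+$, so $(z_{v_n})$ is bounded in $\Le^\infty(0,\sT;\H^1_0(\Omega)) \cap \Le^4(0,\sT;\Le^{12}(\Omega))$ and $(\partial_t z_{v_n})$ in $\Le^\infty(0,\sT;\Le^2(\Omega))$. The Aubin-Lions-Simon lemma~\cite{S87} produces strong convergence $z_{v_n}\to z_v$ in $\C([0,\sT];\Le^p(\Omega))$ for every $p < 6$ and hence a.e.\ convergence of $(z_{v_n}^2)$ along a subsequence. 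Interpolation between $\Le^\infty(0,\sT;\Le^6(\Omega))$ and $\Le^4(0,\sT;\Le^{12}(\Omega))$ shows uniform boundedness of $(z_{v_n}^2)$ in $\Le^4(0,\sT;\Le^4(\Omega))$, so that $z_{v_n}^2 \wto z_v^2$ in that space. Since $\bar p\cdot \bar y_{\bar u}^3 \in \Le^{4/3}(0,\sT;\Le^{4/3}(\Omega))$ by a direct H\"older estimate using $\bar p \in \Le^\infty(0,\sT;\Le^2(\Omega))$ and $\bar y_{\bar u} \in \Le^4(0,\sT;\Le^{12}(\Omega))$, the fixed factor lies in the topological dual and the integral converges. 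The main obstacle throughout is precisely the mixed-Lebesgue H\"older bookkeeping to certify this final pairing; once it is carried out, weak continuity of the nonconvex summand combined with weak lower semicontinuity of the convex ones closes the argument.
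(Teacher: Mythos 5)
Your proposal is correct, and the differentiability part coincides with what the paper leaves as ``routine calculations'' after Corollary~\ref{cor:psi-diff}: the same decomposition, the same chain-rule computation, and the same two uses of the adjoint identity $\langle g,\cS'(\bar u)h\rangle = \langle \cS'(\bar u)^\ast g,h\rangle$ to produce $\bar p$ in both $F'$ and the cross term of $F''$.

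For the weak lower semicontinuity your route is genuinely different from the paper's, and leaner. The paper treats only $\beta_2\|v\|_{\Le^2(0,\sT;\Le^2(\Omega))}^2$ by convexity and proves that the other three summands are weakly \emph{continuous}; the bulk of its proof is devoted to $\Psi''(y_{\bar u})z_k^2$, whose explicit representation~\eqref{eq:psi-second-deriv-formula} contains a negative summand and therefore has to be analyzed term by term with H\"older estimates, pointwise a.e.\ convergence and repeated applications of the Vitali convergence theorem. You instead observe that $\|z_v(\sT)\|_{\Le^2(\Omega)}^2$ and $\gamma\Psi''(y_{\bar u})z_v^2$ are nonnegative continuous quadratic forms in $v$ (the latter because $\Psi$ is convex as an increasing convex function of a norm, so $\Psi''(y_{\bar u})\geq 0$), hence convex and weakly lower semicontinuous, which removes the hardest part of the paper's argument entirely. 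Only the genuinely nonconvex term $-\int_0^\sT\ip{\bar p,20y_{\bar u}^3z_v^2}\dd t$ then requires weak continuity, and your treatment of it---Aubin--Lions--Simon compactness for a.e.\ convergence of $z_{v_n}^2$, the interpolation bound in $\Le^8(0,\sT;\Le^8(\Omega))$ giving $z_{v_n}^2\wto z_v^2$ in $\Le^4(0,\sT;\Le^4(\Omega))$, and the H\"older check that $\bar p\,y_{\bar u}^3\in\Le^{4/3}(0,\sT;\Le^{4/3}(\Omega))$ lies in the dual---is a valid substitute for the paper's uniform-integrability/Vitali argument for that term (your exponent bookkeeping is correct: $\tfrac18=\tfrac12\cdot\tfrac16+\tfrac12\cdot\tfrac1{12}$ and $\tfrac34=\tfrac12+\tfrac14$). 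The only cosmetic omission is the subsequence--subsequence step needed to pass from a.e.\ convergence along a subsequence to weak convergence of the full sequence, which the paper spells out and you leave implicit; it is standard and does not affect correctness. Both approaches yield the stated conclusion; yours buys a shorter proof at the price of using convexity of $\Psi$ rather than its explicit second derivative, while the paper's stronger weak-continuity statements for the first three terms are not actually needed elsewhere.
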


We recall that $r = 1$ if $\beta_2 = 0$ and $r=2$ otherwise.

\begin{proof}[Proof of Lemma~\ref{lem:f-c2}]
  Corollary~\ref{cor:psi-diff} in the appendix shows that $\Psi$
  is twice continuously differentiable on the whole
  $\Le^4(0,\sT;\Le^{12}(\Omega))$, with the first derivative as
  in~\eqref{eq:l4l12-power-deriv}. The remaining
  differentiability assertions for $F$ and the formula for
  $F''(\bar u)$ are derived by routine calculations.

  The remainder of this proof is devoted to verifying the weak lower
  semicontinuity of the quadratic form induced by $F''(\bar u)$. Let
  $v_k \wto v$ in $\Le^r(0,\sT;\Le^2(\Omega))$. It is clear that the
  quadratic $\Le^2(0,\sT;\Le^2(\Omega))$ norm term in $F''(\bar u)v^2$
  is weakly lower semicontinuous in $v$. We show that the remaining
  terms are even weakly continuous as functions in $v$. From
  $v_k \wto v$ in $\Le^r(0,\sT;\Le^2(\Omega))$ it follows that
  $z_k \defn \cS'(\bar u)v_k \wto z_v \nfed \cS'(\bar u)v$ in $\cY_+$
  by Theorem~\ref{thm:control-to-state-diff}.
  This implies that, cf.~\eqref{eq:solution-space-embed-l5l10},
  \begin{equation}\label{eq:z_k-uniform}
    \|z_k\|_{\Le^5(0,\sT;\Le^{10}(\Omega))} \lesssim \|z_k\|_{\cY} 
    \lesssim 1.
  \end{equation}
  By the compact embedding $\cY_+ \embeds \C([0,\sT];\Le^2(\Omega))$
  as derived from~\cite[Cor.~4]{S87}), we further have
  $z_k \to z_v$ in $\C([0,\sT];\Le^2(\Omega))$. Hence,
  $z_k(t) \to z_v(t)$ in $\Le^2(\Omega)$ for every $t
  \in[0,\sT]$, and in particular
  $\|z_k(\sT)\|_{\Le^2(\Omega)}^2 \to
  \|z_v(\sT)\|_{\Le^2(\Omega}^2$ as $k \to \infty$. Moreover, there is a
  subsequence $(z_{k_\ell})$ of $(z_k)$ such that $z_{k_\ell}^2$
  converges to $z_v^2$ pointwise almost everywhere on
  $(0,\sT) \times \Omega$. H\"older's inequality yields
  \begin{multline*}
    \int_{E_t} \int_{E_\x} \left|\bar p(t,\x)\, y_{\bar
        u}^3(t,\x) \, z_{k_\ell}^2(t,\x) \right| \dd \x \dd t \\
    \leq \bigl\|p\bigr\|_{\Le^\infty(0,\sT;\Le^2(\Omega))}
    \bigl\|y_{\bar u}\bigr\|_{\Le^5(E_t;\Le^{10}(E_\x))}^3
    \bigl\|z_{k_\ell}\bigr\|_{\Le^5(0,\sT;\Le^{10}(\Omega))}^2
  \end{multline*}
  for every measurable subset $E_t \times E_\x$ of
  $(0,\sT) \times \Omega$. Due to~\eqref{eq:z_k-uniform}, the
  integral on the left-hand side thus goes to zero uniformly in
  $\ell$ as $|E_t \times E_\x| \to 0$. We infer that the
  functions $(\bar p y_{\bar u}^3 z_{k_\ell}^2)$ are
  uniformly integrable and the Vitali convergence
  theorem~(\cite[Thm.~III.3.6.6]{DS58}) implies that
  \begin{equation*}
    - \int_0^\sT \bip{\bar p(t),20
      y_{\bar u}^3(t) z_{k_\ell}^2(t)} \dd t \quad \xrightarrow{~\ell\to\infty~} \quad
    - \int_0^\sT \bip{\bar p(t),20 
      y_{\bar u}^3(t) z_v^2(t)} \dd t.
  \end{equation*}
  A subsequence-subsequence argument shows that this convergence
  in fact holds true for the whole sequence $(z_k)$.

  We next turn to the sequence $(\Psi''(y_{\bar u})z_k^2)$ which is
  the last term in $F''(\bar u)v_k^2$. According to
  Corollary~\ref{cor:psi-diff} with $p=4$ and $q=12$, we have
  \begin{multline}\label{eq:psi-second-deriv-formula}
    \Psi''(y_{\bar u})z_k^2 = 11 \int_0^\sT \|y_{\bar
      u}(t)\|^{-8}_{\Le^{12}(\Omega)} \bip{|y_{\bar
        u}(t)|^{10},z_k^2(t)}\dd t \\ - 8 \int_0^\sT \|y_{\bar
      u}(t)\|_{\Le^{12}(\Omega)}^{-20} \bip{|y_{\bar
        u}(t)|^{10}y_{\bar u}(t),z_k(t)}^2 \dd t.
  \end{multline}
  The limit for the first term
  \begin{equation*}
   \lim_{k\to\infty} \int_0^\sT \|y_{\bar u}(t)\|^{-8}_{\Le^{12}(\Omega)}
    \bip{|y_{\bar u}(t)|^{10},z_k^2(t)}\dd t 
   =  \int_0^\sT \|y_{\bar u}(t)\|^{-8}_{\Le^{12}(\Omega)}
    \bip{|y_{\bar u}(t)|^{10},z_v^2(t)}\dd t
  \end{equation*}
  can be proven analogously to the above with H\"older's inequality
  and the Vitali theorem due to boundedness of $(z_k)$ in $\cY_+$. For
  the second term in~\eqref{eq:psi-second-deriv-formula}, we first show that
  $\bip{|y_{\bar u}(t)|^{10}y_{\bar u}(t),z_k(t)}$ converges towards
  $\bip{|y_{\bar u}(t)|^{10}y_{\bar u}(t),z_v(t)}$ pointwise a.e.\ on
  $[0,\sT]$ as $k\to\infty$. Then convergence of the overall integral
  follows from H\"older's inequality
  \begin{multline*}
    \int_{E_t} \|y_{\bar u}(t)\|_{\Le^{12}(\Omega)}^{-20}
    \bip{|y_{\bar u}(t)|^{10}y_{\bar u}(t),z_k(t)}^2 \dd t \\
    \leq \|y_{\bar
      u}(t)\|_{\Le^4(E_t;\Le^{12}(\Omega))}^2\|z_k(t)\|_{\Le^4(0,\sT;\Le^{12}(\Omega))}^2
  \end{multline*}
  for every measurable subset $E_t \subseteq (0,\sT)$ and yet
  another application of the Vitali theorem, using boundedness
  of $(z_k)$ in $\cY_+$.

  Pointwise convergence of
  $(\bip{|y_{\bar u}(t)|^{10}y_{\bar u}(t),z_k(t)})$ can be
  obtained as follows: We had seen that $z_k(t) \to z_v(t)$ in
  $\Le^2(\Omega)$ for every $t \in [0,\sT]$. Thus for every such
  $t$ there exists a subsequence $(z_{k_m}(t))$ such that
  $z_{k_m}(t) \to z_v(t)$ almost everywhere on
  $\Omega$. Moreover, we estimate for every measurable subset
  $E_\x \subseteq \Omega$
  \begin{equation*}
    \int_{E_\x} |y_{\bar u}(t)|^{11}|z_{k_m}(t)| \dd \x \leq
    \|y_{\bar
      u}(t)\|_{\Le^{\frac{66}5}(E_\x)}^{11}\|z_{k_m}(t)\|_{\Le^6(\Omega)}
    \lesssim \|y_{\bar u}(t)\|_{\Le^{\frac{66}5}(E_\x)}^{11},
  \end{equation*}
  since $(z_{k_m})$ is bounded in
  $\cY_+ \embeds \C([0,\sT];\Le^6(\Omega))$. The general
  form of the Strichartz
  estimates~\eqref{eq:strichartz-estimate} as in~\cite{BSS09}
  shows that in fact
  $y_{\bar u} \in
  \Le^{\frac{11}3}(0,\sT;\Le^{\frac{66}5}(\Omega))$. Since $t$
  was fixed, the foregoing expression thus goes to zero
  uniformly in $m$ as $|E_\x|$ goes to zero. Hence, again the
  Vitali convergence theorem together with a
  subsequence-subsequence argument shows that
  \begin{equation*}
    \int_{\Omega} |y_{\bar u}(t)|^{10}y_{\bar u}(t)z_{k}(t) \dd
    \x \quad \xrightarrow{~k\to\infty~} \quad \int_{\Omega} |y_{\bar u}(t)|^{10}y_{\bar u}(t)z_{v}(t) \dd \x.
  \end{equation*}
  This finally implies that overall
  $\Psi''(y_{\bar u})z_k^2 \to \Psi''(y_{\bar u})z_v^2$ and
  finishes the proof.
\end{proof}


\subsubsection{First-order necessary conditions}

The \emph{tangent cone} to $\uad$ in a point
$u \in \uad$ is
\begin{multline*}
  \cT(u) \defn \Bigl\{ v \in \Le^r(0,\sT;\Le^2(\Omega)) \colon \bip{v(t),u(t)}
  \leq 0~\text{if}~\|u(t)\|_{\Le^2(\Omega)} = \omega(t) > 0, \\ v(t) =
  0~\text{if}~\|u(t)\|_{\Le^2(\Omega)} = \omega(t) = 0\Bigr\}.
\end{multline*}
We use it to state the basic first-order necessary optimality
condition for~\eqref{eq:ROCP} in a concise form. We refer to~\cite[Thm.~3.1]{CHW17}
for the routine proof.

\begin{theorem}[First-order necessary optimality condition]\label{thm:FONC}
  Let $\bar u \in \uad$ be a locally optimal solution of~\eqref{eq:ROCP}. Then there exists
  $\bar\lambda \in \partial\sj(\bar u) \subset \Le^\infty(0,\sT;\Le^2(\Omega))$ such
  that
  \begin{equation}\label{eq:FONC}
    \int_0^\sT\bip{ \bar p(t) + \beta_{1}\bar\lambda(t) +
      \beta_{2}\bar u(t), v(t)} \dd t  \geq 0 \quad \text{for
      all}~v \in \cT(\bar u).
  \end{equation}
\end{theorem}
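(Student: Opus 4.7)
The plan is to derive~\eqref{eq:FONC} from local optimality of $\bar u$ by standard convex subdifferential calculus applied to the decomposition $\ell_r = F + \beta_1 \sj$ on $\uad$, combined with the explicit formulas from Lemma~\ref{lem:f-c2} and Proposition~\ref{prop:L1L2-subdiff-character}.

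First, I would encode the constraint $u \in \uad$ via the convex indicator function $I_{\uad}$ and observe that $\bar u$ is a local minimizer of $F + \beta_1 \sj + I_{\uad}$ on $\Le^r(0,\sT;\Le^2(\Omega))$. Since $F$ is $\C^1$ by Lemma~\ref{lem:f-c2} while $\beta_1\sj + I_{\uad}$ is convex and proper, local optimality yields the subdifferential inclusion $-F'(\bar u) \in \partial\bigl(\beta_1 \sj + I_{\uad}\bigr)(\bar u)$. Since $\sj$ is a continuous seminorm on all of $\Le^r(0,\sT;\Le^2(\Omega))$ (in particular finite-valued on $\uad$), the Moreau-Rockafellar sum rule applies and gives $\partial(\beta_1 \sj + I_{\uad})(\bar u) = \beta_1 \partial \sj(\bar u) + N_{\uad}(\bar u)$, where $N_{\uad}(\bar u)$ denotes the convex normal cone to $\uad$ at $\bar u$.

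Second, this inclusion produces $\bar\lambda \in \partial\sj(\bar u)$ and $\mu \in N_{\uad}(\bar u)$ with $F'(\bar u) + \beta_1 \bar\lambda + \mu = 0$. The regularity $\bar\lambda \in \Le^\infty(0,\sT;\Le^2(\Omega))$ follows from the pointwise characterization in Proposition~\ref{prop:L1L2-subdiff-character}. The key step is then to verify that every $v \in \cT(\bar u)$ is polar to $\mu$, i.e.\ $\int_0^\sT \bip{\mu(t),v(t)} \dd t \leq 0$, which comes from the inclusion $\cT(\bar u) \subseteq T_{\uad}(\bar u)$ where $T_{\uad}(\bar u)$ is the (Clarke/Bouligand) tangent cone. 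Combining this polar inequality with the explicit formula $F'(\bar u)v = \int_0^\sT \bip{\bar p(t) + \beta_2 \bar u(t),v(t)} \dd t$ from Lemma~\ref{lem:f-c2} gives
\begin{equation*}
  \int_0^\sT \bip{\bar p(t) + \beta_1 \bar\lambda(t) + \beta_2 \bar u(t),v(t)} \dd t = -\int_0^\sT \bip{\mu(t),v(t)} \dd t \geq 0,
\end{equation*}
which is precisely~\eqref{eq:FONC}.

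The main obstacle is the verification that the explicit cone $\cT(\bar u)$ stated in the paper consists of genuine feasible directions, so that the polarity with $N_{\uad}(\bar u)$ applies. This reduces to a pointwise analysis at almost every $t \in (0,\sT)$: on $\{\|\bar u(t)\|_{\Le^2(\Omega)} < \omega(t)\}$ admissibility of $\bar u + tv$ for small $t$ is automatic; on $\{\|\bar u(t)\|_{\Le^2(\Omega)} = \omega(t) > 0\}$ the condition $\bip{v(t),\bar u(t)} \leq 0$ combined with a radial retraction $u_t(t) \defn \omega(t)\frac{\bar u(t) + tv(t)}{\|\bar u(t) + tv(t)\|_{\Le^2(\Omega)}}$ produces an admissible perturbation with $u_t = \bar u + tv + o(t)$; and on $\{\omega(t) = 0\}$ the requirement $v(t) = 0$ makes the statement trivial. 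Assembling these pointwise constructions into a measurable perturbation $u_t \in \uad$ with $t^{-1}(u_t - \bar u) \to v$ in $\Le^r(0,\sT;\Le^2(\Omega))$ is the technical heart of the argument; this is carried out in~\cite[Thm.~3.1]{CHW17} in the box-constrained setting, and the construction transfers directly because the defining inequality of $\cT(\bar u)$ is the standard linearization of the pointwise ball constraint $\|u(t)\|_{\Le^2(\Omega)} \leq \omega(t)$.
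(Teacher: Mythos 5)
Your argument is correct and is essentially the ``routine proof'' that the paper delegates to~\cite[Thm.~3.1]{CHW17}: local optimality of $\bar u$ for $F+\beta_1\sj+I_{\uad}$ gives $-F'(\bar u)\in\partial(\beta_1\sj+I_{\uad})(\bar u)$, the Moreau--Rockafellar sum rule (applicable since $\sj$ is finite and continuous on all of $\Le^r(0,\sT;\Le^2(\Omega))$) splits this into $\bar\lambda\in\partial\sj(\bar u)$ and $\mu\in N_{\uad}(\bar u)$, and polarity of $\cT(\bar u)$ with $N_{\uad}(\bar u)$ together with the formula for $F'(\bar u)$ from Lemma~\ref{lem:f-c2} yields~\eqref{eq:FONC}. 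The one step that does \emph{not} transfer verbatim from the box-constrained setting of~\cite{CHW17} is the identification $\cT(\bar u)=\overline{\mathrm{cone}(\uad-\bar u)}$: your radial retraction satisfies $u_t=\bar u+tv+o(t)$ only pointwise, with an error of order $t^2\|v(t)\|_{\Le^2(\Omega)}^2/\omega(t)$ that need not be integrable for general $v\in\cT(\bar u)$, so one must first truncate $v$ on level sets of $\|v\|_{\Le^2(\Omega)}$, $\omega$ and $\omega-\|\bar u\|_{\Le^2(\Omega)}$ --- exactly the device the paper uses with the sets $N_k$ in Step~1 of the proof of Theorem~\ref{thm:sonc} --- and then pass to the limit, which is harmless here because both sides of~\eqref{eq:FONC} are linear and continuous in $v$ on $\Le^r(0,\sT;\Le^2(\Omega))$.
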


Note that since $\sj$ is Lipschitz continuous and convex, $\bar\lambda \in
\partial\sj(\bar u)$ and~\eqref{eq:FONC} imply that (cf.~\cite[Lem.~4.2]{CHW17})
\begin{equation}
  \label{eq:FONC-direc-deriv}
  F'(\bar u)v + \beta_1\sj'(\bar u;v) \geq 0 \quad \text{for all}~v \in \cT(\bar u).
\end{equation}

We henceforth always consider a fixed locally optimal control $\bar u \in \uad$ and
the optimality condition~\eqref{eq:FONC} as given. Let us further subdivide $(0,\sT)$
into active and inactive regions w.r.t.\ the constraint in~\eqref{eq:ROCP} by
defining the following sets:
\begin{equation*}
  \cI \defn \bsetof{\|\bar u\|_{\Le^2(\Omega)} < \omega}, \quad \cA_+ \defn
  \bsetof{\|\bar u\|_{\Le^2(\Omega)} =
  \omega > 0}, \quad \cA_0 \defn \bsetof{\omega = 0}.
\end{equation*}
We first show that the integrated optimality
condition~\eqref{eq:FONC} is equivalent to the pointwise one.

\begin{corollary}
  \label{cor:FONC-pointwise}
  Condition~\eqref{eq:FONC} is equivalent to
  \begin{equation}
    \label{eq:FONC-pointwise}
    \bip{ \bar p(t) + \beta_{1}\bar\lambda(t) + \beta_{2}\bar u(t), v(t)} \geq 0 \quad
\text{for almost all}~t \in (0,\sT)
  \end{equation}
  for all $v \in \cT(\bar u)$. It moreover follows that
  \begin{equation}
    \label{eq:FONC-pointwise-inactive-zero}
    \bar p(t) + \beta_{1}\bar\lambda(t) + \beta_{2}\bar u(t) = 0
    \quad \text{for almost all}~t \in \cI.
  \end{equation}
\end{corollary}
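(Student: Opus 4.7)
The implication \eqref{eq:FONC-pointwise} $\Rightarrow$ \eqref{eq:FONC} is immediate by integration over $(0,\sT)$. For the reverse direction, the key observation is that the defining conditions of $\cT(\bar u)$ are entirely \emph{pointwise}: if $v \in \cT(\bar u)$ and $E \subseteq (0,\sT)$ is any measurable set, then $v \chi_E$ still lies in $\cT(\bar u)$, because multiplying by $\chi_E$ turns $v$ into either itself or zero on each time slice, and both satisfy the pointwise inequality defining $\cT(\bar u)$. Fix $v \in \cT(\bar u)$ and write $\chi(t) \defn \bar p(t) + \beta_{1}\bar\lambda(t) + \beta_{2}\bar u(t) \in \Le^2(\Omega)$ for almost all $t$. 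Applying \eqref{eq:FONC} to $v\chi_E$ gives
\[
    \int_E \bip{\chi(t), v(t)} \dd t \geq 0 \quad \text{for every measurable}~E \subseteq (0,\sT),
\]
from which the standard du~Bois-Reymond-type argument yields $\bip{\chi(t), v(t)} \geq 0$ for almost every $t \in (0,\sT)$, i.e., \eqref{eq:FONC-pointwise}.

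For the second assertion, note that the measurability of $\cI$, $\cA_+$ and $\cA_0$ follows from the Bochner measurability of $\bar u$ as a $\Le^2(\Omega)$-valued function together with the measurability of $\omega$. Pick any $w \in \Le^2(\Omega)$ and set $v \defn w\chi_\cI$, which lies in $\Le^\infty(0,\sT;\Le^2(\Omega)) \subseteq \Le^r(0,\sT;\Le^2(\Omega))$. Since $\cI = [\|\bar u\|_{\Le^2(\Omega)} < \omega]$, neither of the two pointwise constraints defining $\cT(\bar u)$ is active on $\cI$, and $v$ vanishes off $\cI$, so both $v$ and $-v$ belong to $\cT(\bar u)$. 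Applying the already-established pointwise condition \eqref{eq:FONC-pointwise} to $\pm v$ yields
\[
    \bip{\chi(t), w} = 0 \quad \text{for almost all}~t \in \cI.
\]
The exceptional null set may depend on $w$, but choosing $w$ in a countable dense subset of $\Le^2(\Omega)$ and combining the countably many null sets, we conclude $\chi(t) = 0$ for almost all $t \in \cI$, which is precisely \eqref{eq:FONC-pointwise-inactive-zero}.

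The only nontrivial point is the first step of the \eqref{eq:FONC} $\Rightarrow$ \eqref{eq:FONC-pointwise} implication, namely that $\cT(\bar u)$ is closed under multiplication by characteristic functions of measurable subsets of $(0,\sT)$; everything else is bookkeeping around the pointwise structure of the tangent cone.
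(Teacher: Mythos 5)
Your proposal is correct and follows essentially the same route as the paper: the paper likewise notes that $\chi_N v \in \cT(\bar u)$ for measurable $N$ and runs the standard localization/contradiction argument for the equivalence, and obtains~\eqref{eq:FONC-pointwise-inactive-zero} by inserting $\pm\chi_{\cI}$ times arbitrary admissible directions into~\eqref{eq:FONC-pointwise}. Your additional care with the $w$-dependent null sets via a countable dense subset of $\Le^2(\Omega)$ is a detail the paper leaves implicit, but it changes nothing substantive.
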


\begin{proof}
 It is obvious that~\eqref{eq:FONC-pointwise}
 implies~\eqref{eq:FONC}. For the other way around, it suffices
 to observe that if $v \in \cT(\bar u)$, then also $\chi_N
 v \in \cT(\bar u)$ for every measurable set $N \subseteq
 (0,\sT)$,  so one can do the usual proof by
 contradiction. Quite
 similarly,~\eqref{eq:FONC-pointwise-inactive-zero} follows from
 inserting $\pm \chi_{\cI}\Le^r(0,\sT;\Le^2(\Omega)) \subset
 \cT(\bar u)$ into~\eqref{eq:FONC-pointwise}.
\end{proof}

The next result is then an observation regarding \emph{sparsity} and regularity of an optimal
control $\bar u$, as well as uniqueness of the subgradient $\bar\lambda$.
The proof is analogous
to the one in~\cite[Cor.~3.9]{CHW17} using~\eqref{eq:FONC-pointwise} and~\eqref{eq:FONC-pointwise-inactive-zero}.

\begin{corollary}\label{cor:sparsity-in-time-optimal}
  The following
  properties hold true:
  \begin{itemize}[leftmargin=*]
  \item If $\beta_2 > 0$:
    We have $\bar u \in \Le^\infty(0,\sT;\Le^2(\Omega))$ and thus $\omega \in
    \Le^\infty(\cA_+)$. Moreover, for almost all $t \in \cI$, the following
    equivalence holds true:
    \begin{equation*}\|\bar u(t)\|_{\Le^2(\Omega)} = 0 \quad \iff \quad \|\bar
      p(t)\|_{\Le^2(\Omega)} \leq \beta_1.
    \end{equation*}
  \item If $\beta_2 = 0$: For almost all $t \in \cI$, we have the implications
    \begin{equation*}
      \|\bar p(t)\|_{\Le^2(\Omega)} < \beta_1 \quad \implies \quad \|\bar
      u(t)\|_{\Le^2(\Omega)} = 0 \quad \implies \quad\|\bar p(t)\|_{\Le^2(\Omega)} \leq \beta_1.
    \end{equation*}
  \end{itemize}
  In both cases, $\bar \lambda \in \Le^\infty(0,\sT;\Le^2(\Omega))$ satisfies
  \begin{equation*}
    \bar\lambda(t) =
    \begin{cases}
      \frac{\bar u(t)}{\|\bar u(t)\|_{\Le^2(\Omega)}} & \text{if}~\|\bar
      u(t)\|_{\Le^2(\Omega)}
      \neq 0, \\[0.5em]
      -\frac1{\beta_1} \bar p(t) & \text{if}~\|\bar u(t)\|_{\Le^2(\Omega)} = 0
    \end{cases} \quad \text{f.a.a.}~t\in \cA_+ \cup \cI.
  \end{equation*}
  It is thus unique on $\cA_+ \cup \cI$.
\end{corollary}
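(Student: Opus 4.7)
My plan is to argue entirely pointwise in $t$, combining the pointwise reformulation~\eqref{eq:FONC-pointwise} of the first-order condition with the explicit characterization of $\partial\sj(\bar u)$ in Proposition~\ref{prop:L1L2-subdiff-character}. The crucial observation is that on $\cI$ we have $\|\bar u(t)\|_{\Le^2(\Omega)} < \omega(t)$ strictly, so the tangent cone restricted to $\cI$ is the full space; accordingly,~\eqref{eq:FONC-pointwise-inactive-zero} supplies the strong pointwise identity $\bar p(t) + \beta_1 \bar\lambda(t) + \beta_2\bar u(t) = 0$ in $\Le^2(\Omega)$ for a.a.\ $t \in \cI$, and all claims concerning $\cI$ are algebraic consequences of this identity combined with the two-case form of $\bar\lambda$.

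In more detail, on $\cI$ I would split into cases. If $\|\bar u(t)\|_{\Le^2(\Omega)} = 0$, the identity reduces to $\bar\lambda(t) = -\bar p(t)/\beta_1$, and Proposition~\ref{prop:L1L2-subdiff-character} forces $\|\bar\lambda(t)\|_{\Le^2(\Omega)} \leq 1$, hence $\|\bar p(t)\|_{\Le^2(\Omega)} \leq \beta_1$. If instead $\|\bar u(t)\|_{\Le^2(\Omega)} \neq 0$, substituting $\bar\lambda(t) = \bar u(t)/\|\bar u(t)\|_{\Le^2(\Omega)}$ into the identity and taking $\Le^2(\Omega)$-norms yields the scalar identity $\|\bar p(t)\|_{\Le^2(\Omega)} = \beta_1 + \beta_2 \|\bar u(t)\|_{\Le^2(\Omega)}$. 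For $\beta_2 > 0$ this forces the strict inequality $\|\bar p(t)\|_{\Le^2(\Omega)} > \beta_1$, and the claimed equivalence on $\cI$ follows by contraposition; for $\beta_2 = 0$ only the two one-sided implications survive. The pointwise formula for $\bar\lambda(t)$ on $\cI$ in both cases is then immediate.

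It remains to prove the $\Le^\infty$-regularity in the case $\beta_2 > 0$, together with uniqueness of $\bar\lambda$ on $\cA_+ \cup \cI$. On $\cA_0$ we trivially have $\bar u(t) = 0$, and on the nontrivial part of $\cI$ the scalar identity above already gives $\|\bar u(t)\|_{\Le^2(\Omega)} = (\|\bar p(t)\|_{\Le^2(\Omega)} - \beta_1)/\beta_2$, which is bounded because $\bar p \in \Le^\infty(0,\sT;\Le^2(\Omega))$. The main obstacle is the active set $\cA_+$, on which only the inequality~\eqref{eq:FONC-pointwise} is available, not the equation. I would overcome it by testing with the pointwise feasible direction $v(t) = -\bar u(t)$, which lies in $\cT(\bar u)$ since $\bip{-\bar u(t),\bar u(t)} = -\|\bar u(t)\|_{\Le^2(\Omega)}^2 \leq 0$ and $\bar u(t) \neq 0$ there. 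Using $\bar\lambda(t) = \bar u(t)/\|\bar u(t)\|_{\Le^2(\Omega)}$ and Cauchy--Schwarz, the resulting pointwise inequality simplifies after division by $\|\bar u(t)\|_{\Le^2(\Omega)}$ to $\beta_2 \|\bar u(t)\|_{\Le^2(\Omega)} + \beta_1 \leq \|\bar p(t)\|_{\Le^2(\Omega)}$, yielding both the desired $\Le^\infty$-bound on $\bar u$ (and hence $\omega \in \Le^\infty(\cA_+)$, because $\omega = \|\bar u\|_{\Le^2(\Omega)}$ there). Uniqueness of $\bar\lambda$ on $\cA_+ \cup \cI$ then follows by assembling the pieces: on $\{\|\bar u\|_{\Le^2(\Omega)} \neq 0\}$ (which covers all of $\cA_+$) Proposition~\ref{prop:L1L2-subdiff-character} pins down $\bar\lambda(t)$, while on the remaining part of $\cI$ it is determined by the pointwise equation~\eqref{eq:FONC-pointwise-inactive-zero}.
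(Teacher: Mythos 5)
Your argument is correct and follows exactly the route the paper indicates (it only sketches the proof by reference to~\cite[Cor.~3.9]{CHW17}): the pointwise identity~\eqref{eq:FONC-pointwise-inactive-zero} combined with the two-case characterization of $\partial\sj(\bar u)$ handles $\cI$, and testing~\eqref{eq:FONC-pointwise} with the feasible direction $-\chi_{\cA_+}\bar u$ yields $\beta_1+\beta_2\|\bar u(t)\|_{\Le^2(\Omega)}\leq\|\bar p(t)\|_{\Le^2(\Omega)}$ on $\cA_+$, giving the $\Le^\infty$-bound. All case distinctions, the scalar identity $\|\bar p(t)\|_{\Le^2(\Omega)}=\beta_1+\beta_2\|\bar u(t)\|_{\Le^2(\Omega)}$ on the inactive set where $\bar u(t)\neq 0$, and the uniqueness argument for $\bar\lambda$ check out.
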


Using Corollaries~\ref{cor:FONC-pointwise} and~\ref{cor:sparsity-in-time-optimal} we
can show that a unique bounded Lagrange multiplier associated to $\uad$ and the
locally optimal control $\bar u$ exists. We use the convention that $\frac00 = 0$.

\begin{definition}[Lagrange multiplier]
  \label{def:lagrange-multiplier}
  We say that a measurable function $\bar\mu \colon \cA_+ \cup \cI \to [0,\infty)$ is
  a \emph{Lagrange multiplier associated to $\uad$} if
  $\bar\mu(t)(\|\bar u(t)\|_{\Le^2(\Omega)} - \omega(t)) = 0$ for almost all
  $t \in \cA_+ \cup \cI$ is satisfied (complementarity), and the gradient equation
  \begin{equation}\label{eq:FONC-multiplier}
    \bar p(t) + \beta_{1}\bar\lambda(t) + \beta_{2}\bar u(t) + \bar \mu(t) \frac{\bar
      u(t)}{\|\bar u(t)\|_{\Le^2(\Omega)}} = 0 \quad \text{for almost all}~t \in \cA_+
    \cup \cI
  \end{equation}
  holds true.
\end{definition}

\begin{lemma}
  \label{lem:multiplier-existence}
  There exists a unique Lagrange multiplier
  $\bar \mu \in \Le^\infty(\cA_+ \cup \cI)$ associated to $\uad$.
\end{lemma}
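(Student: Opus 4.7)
The approach is to construct $\bar\mu$ explicitly from the pointwise optimality condition~\eqref{eq:FONC-pointwise} of Corollary~\ref{cor:FONC-pointwise}. Set $w(t) \defn \bar p(t) + \beta_1\bar\lambda(t) + \beta_2\bar u(t)$; by Corollary~\ref{cor:sparsity-in-time-optimal} one has $w \in \Le^\infty(0,\sT;\Le^2(\Omega))$. On $\cI$ complementarity forces $\bar\mu \equiv 0$, since $\|\bar u(t)\|_{\Le^2(\Omega)} < \omega(t)$ strictly there; this is consistent with~\eqref{eq:FONC-multiplier} because~\eqref{eq:FONC-pointwise-inactive-zero} gives $w \equiv 0$ on $\cI$. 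On $\cA_+$ the plan is to show that $w(t)$ is pointwise a non-positive scalar multiple of $\bar u(t)$ and to read off $\bar\mu$ from this relation.

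The central step is a pointwise polar-cone argument. For a fixed $v_0 \in \Le^2(\Omega)$, the test direction
\begin{equation*}
v(t) \defn \pm\chi_{\cA_+}(t)\Bigl(v_0 - \tfrac{\bip{v_0,\bar u(t)}}{\|\bar u(t)\|_{\Le^2(\Omega)}^2}\bar u(t)\Bigr)
\end{equation*}
lies in $\cT(\bar u)$ since $\bip{v(t),\bar u(t)} = 0$ on $\cA_+$ and $v$ vanishes elsewhere. Inserting both signs into~\eqref{eq:FONC-pointwise} yields
\begin{equation*}
\bip{w(t),v_0} = \tfrac{\bip{v_0,\bar u(t)}}{\|\bar u(t)\|_{\Le^2(\Omega)}^2}\bip{w(t),\bar u(t)} \quad\text{a.e.\ on }\cA_+.
\end{equation*}
Running $v_0$ over a countable dense subset of $\Le^2(\Omega)$ and intersecting the resulting full-measure sets produces a single null set outside of which $w(t) = c(t)\bar u(t)$ holds on $\cA_+$, with $c(t) \defn \bip{w(t),\bar u(t)}/\|\bar u(t)\|_{\Le^2(\Omega)}^2$. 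Testing~\eqref{eq:FONC-pointwise} with $-\chi_{\cA_+}(t)\bar u(t) \in \cT(\bar u)$ also gives $c(t) \leq 0$. Setting
\begin{equation*}
\bar\mu(t) \defn -c(t)\|\bar u(t)\|_{\Le^2(\Omega)} = -\tfrac{\bip{\bar p(t),\bar u(t)}}{\omega(t)} - \beta_1 - \beta_2\omega(t) \quad\text{on }\cA_+
\end{equation*}
(where the second equality uses $\bar\lambda = \bar u/\|\bar u\|_{\Le^2(\Omega)}$ on $\cA_+$ by Corollary~\ref{cor:sparsity-in-time-optimal} and $\|\bar u\|_{\Le^2(\Omega)} = \omega$ there), and $\bar\mu \equiv 0$ on $\cI$, then produces a measurable nonnegative function that satisfies both~\eqref{eq:FONC-multiplier} and the complementarity condition.

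For the $\Le^\infty$-bound, the Cauchy--Schwarz inequality gives $|\bip{\bar p(t),\bar u(t)}|/\omega(t) \leq \|\bar p(t)\|_{\Le^2(\Omega)}$ on $\cA_+$, and $\bar p \in \Le^\infty(0,\sT;\Le^2(\Omega))$ by Definition~\ref{def:adjoint-state}. The $\beta_2\omega$ contribution is harmless since either $\beta_2 = 0$, or $\beta_2 > 0$ and $\omega \in \Le^\infty(\cA_+)$ by Corollary~\ref{cor:sparsity-in-time-optimal}; hence $\bar\mu \in \Le^\infty(\cA_+ \cup \cI)$. Uniqueness is immediate: taking the inner product of~\eqref{eq:FONC-multiplier} with $\bar u(t)$ on $\cA_+$ fixes $\bar\mu(t)$ via the explicit formula above, while on $\cI$ complementarity forces $\bar\mu = 0$. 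The main (minor) technical obstacle is the density argument upgrading the ``for every $v \in \cT(\bar u)$, pointwise a.e.'' statement of Corollary~\ref{cor:FONC-pointwise} into a single pointwise a.e.\ identity valid for all test vectors in $\Le^2(\Omega)$ simultaneously; beyond that, everything is essentially finite-dimensional linear algebra executed fiber-by-fiber.
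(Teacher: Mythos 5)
Your proof is correct, but it takes a genuinely different route from the paper. The paper first establishes existence of a multiplier merely in $\Le^1(\cA_+)$ by an abstract Hahn--Banach separation argument in $\Le^1(\cA_+;\Le^2(\Omega))$ (separating $\bar p + \beta_1\bar\lambda + \beta_2\bar u$ from the convex cone $\{-\mu\|\bar u\|_{\Le^2(\Omega)}^{-1}\bar u \colon \mu \in \Le^1(\cA_+),\ \mu \geq 0\}$, and showing the separating functional would produce an inadmissible tangential direction), and only afterwards upgrades to boundedness and uniqueness by taking $\Le^2(\Omega)$ norms in~\eqref{eq:FONC-multiplier}. You instead construct $\bar\mu$ explicitly fiber-by-fiber: testing~\eqref{eq:FONC-pointwise} with the $\pm$ orthogonal complements of $\bar u(t)$ built from a countable dense set of $v_0 \in \Le^2(\Omega)$ (so that the $v$-dependent null sets can be collected into one) forces $\bar p(t) + \beta_1\bar\lambda(t) + \beta_2\bar u(t)$ to be a scalar multiple of $\bar u(t)$ on $\cA_+$, the sign of which is pinned down by the admissible direction $-\chi_{\cA_+}\bar u$. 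This avoids the separation theorem entirely, and has the added benefit of producing the closed-form expression $\bar\mu = -\bip{\bar p,\bar u}/\omega - \beta_1 - \beta_2\omega$ on $\cA_+$, from which measurability, nonnegativity, the $\Le^\infty$ bound (via Cauchy--Schwarz and $\bar p \in \Le^\infty(0,\sT;\Le^2(\Omega))$, with the $\beta_2\omega$ term controlled by Corollary~\ref{cor:sparsity-in-time-optimal}) and uniqueness all drop out at once; the paper's route, by contrast, needs no separability or density bookkeeping but leaves the multiplier implicit until the final norm identity. Both arguments rest on the same first-order information, and your verification of complementarity and of~\eqref{eq:FONC-multiplier} on $\cI$ via~\eqref{eq:FONC-pointwise-inactive-zero} matches the paper's.
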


\begin{proof}
  We set, of course, necessarily $\bar\mu(t) = 0$ for $t\in\cI$. Then the
  complementarity condition and~\eqref{eq:FONC-multiplier} on $\cI$ are already
  satisfied, the latter due to~\eqref{eq:FONC-pointwise-inactive-zero}.

  The next step is to show that there exists a Lagrange multiplier
  $\bar\mu \in \Le^1(\cA_+)$ associated to $\uad$. This $\bar \mu$ is then
  necessarily already an element of $\Le^\infty(\cA_+)$ and moreover unique, which we
  see as follows: From taking $\Le^2(\Omega)$ norms in~\eqref{eq:FONC-multiplier} for
  $t \in \cA_+$ it follows that
  \begin{equation*} \bigl\|\bar p(t) + \beta_{1}\bar\lambda(t) + \beta_{2}\bar
    u(t)\bigr\|_{\Le^2(\Omega)} = \bar\mu(t) \quad \text{for almost all}~t \in \cA_+.
  \end{equation*}
  The left-hand side is an $\Le^\infty(\cA_+)$ function in $t$ and unique due to
  Corollary~\ref{cor:sparsity-in-time-optimal}.

  It thus remains to show that the $\Le^1(\cA_+)$ Lagrange multiplier exists in the
  first place.  Suppose the contrary, i.e., that
  $\bar p + \beta_1 \bar\lambda + \beta_2 \bar u \neq -\mu \|\bar
  u\|_{\Le^2(\Omega)}^{-1}\bar u$ in $\Le^1(\cA_+;\Le^2(\Omega))$ for all
  $\mu \in \Le^1(\cA_+)$ with $\mu \geq 0$ a.e.. Then the Hahn-Banach theorem yields
  a function $\varphi \in \Le^\infty(\cA_+)$ such that
  \begin{equation*}
    \int_{\cA_+} \frac{\bip{\varphi(t),-\mu(t) \bar u(t)}}{\|\bar
      u(t)\|_{\Le^2(\Omega)}} \dd t \leq 0 <
    \int_{\cA_+} \bip{\varphi(t),\bar p(t) + \beta_1
      \bar\lambda(t) + \beta_2 \bar u(t)} \dd t.
  \end{equation*}
  From the first inequality it follows that
  $-\chi_{\cA_+}\varphi \in \cT(\bar u)$ (proof by contradiction) which however
  is incompatible with the second one by the first order necessary
  condition~\eqref{eq:FONC}. Hence, there exists the searched-for
  $\bar\mu \in \Le^1(\cA_+)$ satisfying $\bar \mu \geq 0$
  and~\eqref{eq:FONC-multiplier} on $\cA_+$. This finishes the proof
\end{proof}

Henceforth, $\bar \mu$ will denote the unique Lagrange multiplier associated to
$\uad$ for the locally optimal control $\bar u$.

\subsubsection{Second order necessary conditions}

We define the \emph{critical cone} $C(\bar u)$ associated to $\uad$ in a locally
optimal control $\bar u$ to consist of tangential directions along which the
directional derivative of $\ell_r$ vanishes, so
\begin{equation*}
  C(\bar u) \defn \Bigl\{v \in \cT(\bar u) \colon F'(\bar u)v + \beta_1\sj'(\bar u;v) = 0\Bigr\}.
\end{equation*}

Due to Lipschitz continuity of $\sj$, it is straightforward to show that $C(\bar u)$
is a closed convex cone in $\Le^r(0,\sT;\Le^2(\Omega))$. 

A formal computation shows that the second derivatives of $\sj$ in $u$ in directions
$(v,v)$ should be given by
\begin{equation*}
  \sj''(u;v^2) \defn \int_{\bigl[\|u\|_{\Le^2(\Omega)} \neq 0\bigr]}
  \|u(t)\|_{\Le^2(\Omega)}^{-1} \left[\|v(t)\|_{\Le^2(\Omega)}^2
    -
    \left(\frac{\bip{u(t),v(t)}}{\|u(t)\|_{\Le^2(\Omega)}}\right)^2\right]
  \dd t, 
\end{equation*}
where we consider the whole expression as $0$ if $u = 0$. Clearly, this expression is
always nonnegative, but there may be directions $v \in \Le^r(0,\sT;\Le^2(\Omega))$
for which $\sj''(u;v^2) = \infty$. In this sense, $\sj''(u;\cdot)$ should not be seen
as a traditional derivative of $\sj'$ at $u$. We still set
$\ell_r''(u;v^2) \defn F''(u)v^2 + \beta_1\sj''(u;v^2)$ which is, any way, a useful
object, as the following second order necessary conditions shows. Its proof will
occupy the rest of this subsection:

\begin{theorem}[Second order necessary conditions]
  \label{thm:sonc}
  Assume that $\omega \in \Le^1(0,\sT)$. Let $\bar u\in \uad$ be a locally optimal
  solution to~\eqref{eq:ROCP} and let $\bar \mu \in \Le^\infty(\cA_+ \cup \cI)$ be
  the associated Lagrange multiplier. Then there holds
  \begin{equation*}
    \ell_r''(\bar u;v^2) + \int_{\cA_+} \bar\mu(t)  \bigl\|\bar
    u(t)\bigr\|_{\Le^2(\Omega)}^{-1} \left[\|v(t)\|_{\Le^2(\Omega)}^2 - 
      \left(\frac{\bip{\bar u(t),v(t)}}{\|\bar u(t)\|_{\Le^2(\Omega)}}\right)^2\right]
    \dd t \geq 0
  \end{equation*}
  for all $v \in C(\bar u)$.
\end{theorem}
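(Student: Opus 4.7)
Given $v \in C(\bar u)$, the idea is to construct an admissible \emph{curved} perturbation $u_s \in \uad$, $s > 0$, of the form $u_s = \bar u + s v + \cO(s^2)$ whose quadratic correction captures the curvature of the constraint sphere $\|w\|_{\Le^2(\Omega)} = \omega(t)$. A natural choice is the pointwise-in-$t$ $\Le^2(\Omega)$-projection $u_s(t) \defn P_{\omega(t)}(\bar u(t) + s v(t))$, which lies in $\uad$ for every $s > 0$. Local optimality of $\bar u$ then gives $\ell_r(u_s) \geq \ell_r(\bar u)$ for all small $s$, and the entire proof consists in Taylor-expanding both sides to second order in $s$, using the first-order condition to kill the linear terms, and letting $s \searrow 0$.

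To set up the expansion, partition $(0,\sT)$ into $\cA_0$, $\cI$, $\cA_+^- \defn \cA_+ \cap \{\ip{\bar u,v} < 0\}$, and $\cA_+^0 \defn \cA_+ \cap \{\ip{\bar u,v} = 0\}$; recall that $v \in \cT(\bar u)$ forces $\ip{\bar u,v} \leq 0$ on $\cA_+$ and $v = \bar u = 0$ on $\cA_0$. Pointwise in $t$, the projection is trivial on $\cA_0$, inactive on $\cI$ and $\cA_+^-$ for $s$ small enough (since there $\|\bar u + s v\|_{\Le^2(\Omega)} < \omega(t)$), and strictly active on $\cA_+^0$ for every $s > 0$. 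On $\cA_+^0$, a direct second-order computation using $\ip{\bar u,v} = 0$ and $\|\bar u\|_{\Le^2(\Omega)} = \omega > 0$ yields the pointwise expansion $u_s(t) = \bar u(t) + s v(t) - \tfrac{s^2\|v(t)\|_{\Le^2(\Omega)}^2}{2\omega(t)^2}\bar u(t) + o(s^2)$ in $\Le^2(\Omega)$.

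Plugging this into the $\C^2$-map $F$ from Lemma~\ref{lem:f-c2} and expanding $\sj(u_s) = \int_0^\sT \|u_s(t)\|_{\Le^2(\Omega)}\dd t$ pointwise---using crucially that the projection pins $\|u_s(t)\|_{\Le^2(\Omega)} = \omega(t)$ on $\cA_+^0$, so this set contributes nothing to $\sj(u_s) - \sj(\bar u)$---yields an expansion of the form $\ell_r(u_s) - \ell_r(\bar u) = s[F'(\bar u)v + \beta_1\sj'(\bar u;v)] + \tfrac{s^2}{2}[\ell_r''(\bar u;v^2) + 2F'(\bar u)q - \beta_1\int_{\cA_+^0}\|v\|_{\Le^2(\Omega)}^2/\omega\,\dd t] + o(s^2)$, where $q(t) \defn -\tfrac{\|v(t)\|_{\Le^2(\Omega)}^2}{2\omega(t)^2}\bar u(t)\chi_{\cA_+^0}(t)$ is the pointwise second-order projection correction. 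The first-order bracket vanishes by $v \in C(\bar u)$. For the second-order bracket, insert the Lagrange-multiplier form of the first-order condition from Definition~\ref{def:lagrange-multiplier}, namely $\bar p + \beta_1\bar\lambda + \beta_2\bar u = -\bar\mu\,\bar u/\|\bar u\|_{\Le^2(\Omega)}$ on $\cA_+$, together with $\ip{\bar\lambda,\bar u} = \|\bar u\|_{\Le^2(\Omega)}$ there, to compute $F'(\bar u)q = \int_{\cA_+^0}(\beta_1 + \bar\mu)\|v\|_{\Le^2(\Omega)}^2/(2\omega)\,\dd t$. The two $\cA_+^0$-contributions then combine exactly to $\tfrac{1}{2}\int_{\cA_+^0}\bar\mu\|v\|_{\Le^2(\Omega)}^2/\omega\,\dd t$.

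Finally, a side computation---analyzing $F'(\bar u)v + \beta_1\sj'(\bar u;v) = 0$ via the very same Lagrange condition---shows $\bar\mu\,\ip{\bar u,v} = 0$ almost everywhere on $\cA_+$ for every $v \in C(\bar u)$, so that $[\bar\mu > 0] \cap \cA_+ \subseteq \cA_+^0$. Using this to pad the domain of integration yields the identity $\int_{\cA_+^0}\bar\mu\|v\|_{\Le^2(\Omega)}^2/\omega\,\dd t = \int_{\cA_+}\bar\mu\|\bar u\|_{\Le^2(\Omega)}^{-1}[\|v\|_{\Le^2(\Omega)}^2 - \ip{\bar u,v}^2/\|\bar u\|_{\Le^2(\Omega)}^2]\,\dd t$, which is precisely the curvature term in the claim. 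Dividing $\ell_r(u_s) - \ell_r(\bar u) \geq 0$ by $s^2/2$ and letting $s \searrow 0$ finishes the proof. The principal technical obstacle is making the pointwise $o(s^2)$ expansions uniform enough in $t$ to yield an $o(s^2)$ remainder after integration in $t$, since $\omega$ is not bounded below and the activation threshold of the projection on $\cA_+^-$ at finite $s$ depends pointwise on $-\ip{\bar u,v}/\|v\|_{\Le^2(\Omega)}^2$; this is where the hypothesis $\omega \in \Le^1(0,\sT)$, combined with a preliminary truncation reducing to $v \in C(\bar u) \cap \Le^\infty(0,\sT;\Le^2(\Omega))$ and dominated convergence, enters.
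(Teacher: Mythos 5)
Your architecture is essentially the paper's: perturb along $v$ with a second-order correction that keeps the iterate on the sphere where the constraint is active and $\ip{\bar u(t),v(t)}=0$, Taylor-expand, cancel the first-order terms by criticality, identify the curvature term through the multiplier equation, and finally remove the boundedness/integrability assumptions on $v$ by truncation and monotone convergence. The algebra is also right: the expansion of the projection on $\cA_+\cap\{\ip{\bar u,v}=0\}$, the computation $F'(\bar u)q=\int(\beta_1+\bar\mu)\|v\|^2/(2\omega)$, the cancellation against $-\beta_1\int\|v\|^2/\omega$, and the padding of the integration domain using $\bar\mu\ip{\bar u,v}=0$ a.e.\ on $\cA_+$ all check out and mirror the paper's Lemma~\ref{lem:critical-direct-prop} and the Lagrangian ansatz~\eqref{eq:nsoc-ansatz}.

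However, there is a genuine gap in the remainder control, and it sits exactly where you locate "the principal technical obstacle" — but the tools you invoke there do not close it. Consider $t\in\cI$ with $0<\omega(t)-\|\bar u(t)\|_{\Le^2(\Omega)}$ small (or $t$ with $\bar u(t)=0$ and $\omega(t)$ small). The projection is active on the set $E_s=\{t:\omega(t)-\|\bar u(t)\|_{\Le^2(\Omega)}<s\|v(t)\|_{\Le^2(\Omega)}\}$, which has positive measure for every $s>0$, and on $E_s$ the displacement $\|u_s(t)-(\bar u(t)+sv(t))\|_{\Le^2(\Omega)}$ is of order $s\|v(t)\|_{\Le^2(\Omega)}$, \emph{not} $s^2$ (unlike on $\cA_+$, where $\ip{\bar u,v}\le 0$ makes it $O(s^2\|v\|^2/\omega)$). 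Hence the remainder in $u_s=\bar u+sv+s^2q+r_s$ satisfies only $\|r_s\|_{\Le^1(0,\sT;\Le^2(\Omega))}\lesssim s\int_{E_s}\|v(t)\|_{\Le^2(\Omega)}\dd t=o(s)$, and after pairing with the sign-indefinite functional $F'(\bar u)$ and dividing by $s^2$ you are left with $\tfrac1s\int_{E_s}\|v(t)\|_{\Le^2(\Omega)}\dd t$, which dominated convergence shows tends to $0$ only \emph{before} division by $s$; without a quantitative bound on $|E_s|$ it need not be $o(1)$ (e.g.\ if $|\{\omega-\|\bar u\|<\eps\}|\sim\sqrt\eps$). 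Neither $\omega\in\Le^1$ nor the reduction to $v\in\Le^\infty$ helps here. The fix is the paper's two-parameter construction: freeze $u_{\rho,k}=\bar u$ on a set $N_k$ containing $\{(1-\alpha_k)\omega<\|\bar u\|_{\Le^2(\Omega)}<\omega\}\cup\{\omega<\omega_k\}\cup\{\omega>\omega_k^{-1}\}$, so that off $N_k$ the perturbation is strictly feasible for $\rho\le\alpha_k\omega_k\|v\|_{\Le^\infty}^{-1}$ with no projection correction at all, take $\rho\searrow0$ \emph{first} for fixed $k$, and only then $k\to\infty$ (using that $\chi_{N_k^c}v$-type directions stay in $C(\bar u)$, Lemma~\ref{lem:critical-direct-multiple-prop}). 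Your single-parameter projection cannot be repaired by dominated convergence alone; it needs this additional truncation in $t$ and the corresponding order of limits.
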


The expression in Theorem~\ref{thm:sonc}
corresponds to $\partial^2_u L(\bar u,\bar \mu;v^2) \geq 0$ with the Lagrangian
\begin{equation*}
L(u,\mu) \defn \ell_r(u) + \int_0^\sT \mu(t)\bigl(\|u(t)\|_{\Le^2(\Omega)} -
\omega(t)\bigr) \dd t,
\end{equation*}
where in the theorem we have already inserted $\bar\mu = 0$ a.e.\ on $\cI$. Both
$\sj''(\bar u;v^2)$ and the explicit integral in the substitute for the second
derivative of the Lagrange penalty term in Theorem~\ref{thm:sonc} may be infinite. We
emphasize once more that we do not require $\omega$ to be bounded away from zero. In
the case $\bar u \equiv 0$, the condition in Theorem~\ref{thm:sonc} collapses to
$F''(0)v^2 \geq 0$ for all $v \in C(0)$.

\begin{remark}
  \label{rem:lagrange-form}
  It is also possible to obtain an analogous result for the Lagrangian with a
  quadratic penalty term
  \begin{equation*}
    L_2(u,\mu) \defn \ell_r(u) + \int_0^\sT \mu(t)\bigl(\|u(t)\|_{\Le^2(\Omega)}^2 -
    \omega(t)^2\bigr) \dd t.
  \end{equation*}
  Then the necessary condition in Theorem~\ref{thm:sonc} becomes
  \begin{equation*}
    \ell_r''(\bar u_2;v^2) + \int_{\cA_+} \bar\mu(t)
    \frac{\|v(t)\|_{\Le^2(\Omega)}^2}{\|\bar u(t)\|_{\Le^2(\Omega)}}
    \dd t \geq 0
  \end{equation*}
  with the same multiplier $\bar \mu$ as before.  The integral in the foregoing
  expression may also be infinite. The proof works nearly exactly as the one for
  Theorem~\ref{thm:sonc} presented below.
\end{remark}

We next prepare for the proof of Theorem~\ref{thm:sonc} with some auxiliary
results. From~\cite[Prop.~4.1/\allowbreak{}Lem.~4.2]{CHW17} together with the
pointwise first-order necessary condition~\eqref{eq:FONC-pointwise} and the Lagrange
gradient equation~\eqref{eq:FONC-multiplier} we obtain the first lemma for critical
directions:

\begin{lemma}\label{lem:critical-direct-prop}
  For all $v \in C(\bar u)$, there holds
  \begin{equation*}
    \sj'(\bar u;v) = \int_0^\sT \bip{\bar\lambda(t),v(t)}\dd t
  \end{equation*}
  and thus
    \begin{equation}
      0 = \bip{\bar p(t) + \beta_1 \bar\lambda(t) +
        \beta_2 \bar u(t),v(t)} = -
      \bar\mu(t) \frac{\bip{\bar u(t),v(t)}}{\|\bar u(t)\|_{\Le^2(\Omega)}} 
    \label{eq:critical-gradient-zero}
  \end{equation}
  for almost all $t \in \cA_+ \cup \cI$.
\end{lemma}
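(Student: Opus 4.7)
The plan is to exploit, on the one hand, the two forms of the first-order necessary condition~\eqref{eq:FONC} and~\eqref{eq:FONC-direc-deriv} together with the defining equation $F'(\bar u)v+\beta_1\sj'(\bar u;v)=0$ of the critical cone, and on the other hand the convexity of $\sj$, which yields the subgradient inequality
\begin{equation*}
  \sj'(\bar u;v) \;\geq\; \int_0^\sT \bip{\bar\lambda(t),v(t)}\dd t
  \quad\text{for all}~v\in\Le^r(0,\sT;\Le^2(\Omega)),
\end{equation*}
since $\bar\lambda \in \partial\sj(\bar u)$ and $\sj$ is convex Lipschitz. For the first assertion, given $v\in C(\bar u)\subseteq \cT(\bar u)$, I chain
\begin{equation*}
  0 \;=\; F'(\bar u)v + \beta_1\sj'(\bar u;v) \;\geq\; F'(\bar u)v + \beta_1\int_0^\sT \bip{\bar\lambda(t),v(t)}\dd t \;\geq\; 0,
\end{equation*}
where the last inequality is~\eqref{eq:FONC} rewritten via $F'(\bar u)v=\int_0^\sT\bip{\bar p+\beta_2\bar u,v}\dd t$. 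Assuming $\beta_1>0$ (the case $\beta_1=0$ makes the claimed identity trivial), dividing by $\beta_1$ gives the first equality of the lemma. Once this is in hand, the displayed chain also yields the integrated identity
\begin{equation*}
  \int_0^\sT \bip{\bar p(t)+\beta_1\bar\lambda(t)+\beta_2\bar u(t),v(t)} \dd t \;=\; 0 \quad\text{for all}~v\in C(\bar u).
\end{equation*}

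For the pointwise statement~\eqref{eq:critical-gradient-zero}, I localize the integrated identity to $\cA_+\cup\cI$ using the partition $(0,\sT) = \cI\cup\cA_+\cup\cA_0$. On $\cI$, the integrand vanishes identically by~\eqref{eq:FONC-pointwise-inactive-zero}; on $\cA_0$, any $v\in\cT(\bar u)$ satisfies $v(t)=0$ by definition of $\cT(\bar u)$, so the integrand vanishes there as well. Thus the integral reduces to one over $\cA_+$. By the pointwise first-order condition~\eqref{eq:FONC-pointwise}, the integrand $\bip{\bar p+\beta_1\bar\lambda+\beta_2\bar u,v(t)}$ is nonnegative almost everywhere on $\cA_+$, and since its integral over $\cA_+$ is zero it must itself vanish a.e.\ on $\cA_+$. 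This gives the first equality in~\eqref{eq:critical-gradient-zero} on all of $\cA_+\cup\cI$. The second equality is then immediate: on $\cI$ both sides are zero (since $\bar\mu=0$ there by Definition~\ref{def:lagrange-multiplier}), and on $\cA_+$ one substitutes the Lagrange gradient equation~\eqref{eq:FONC-multiplier} into $\bip{\,\cdot\,,v(t)}$ to obtain $-\bar\mu(t)\bip{\bar u(t),v(t)}/\|\bar u(t)\|_{\Le^2(\Omega)}$.

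The argument is essentially a bookkeeping exercise once the two first-order conditions are combined with the convex subgradient inequality; the only mildly delicate point is the need to localize carefully on the three subsets of $(0,\sT)$ to pass from the integrated to the pointwise statement, using that $C(\bar u)\subseteq \cT(\bar u)$ forces $v(t)=0$ on $\cA_0$ and that $\bar p+\beta_1\bar\lambda+\beta_2\bar u$ already vanishes on $\cI$ by Corollary~\ref{cor:FONC-pointwise}. No additional regularity of $\bar u$, $v$, or $\omega$ is needed beyond what was already established in Corollary~\ref{cor:sparsity-in-time-optimal} and Lemma~\ref{lem:multiplier-existence}.
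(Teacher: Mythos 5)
Your proof is correct and follows the standard route that the paper itself delegates to \cite[Prop.~4.1/Lem.~4.2]{CHW17}: sandwich $F'(\bar u)v+\beta_1\sj'(\bar u;v)=0$ between the subgradient inequality for $\sj$ and the variational inequality~\eqref{eq:FONC}, then localize via~\eqref{eq:FONC-pointwise-inactive-zero}, the vanishing of $v$ on $\cA_0$, and the sign information from~\eqref{eq:FONC-pointwise} to pass to the pointwise identity, and finish with~\eqref{eq:FONC-multiplier}. The only imprecision is your remark that the case $\beta_1=0$ makes the first identity ``trivial'' --- it is not trivial but simply underivable (and irrelevant) then, so the first claim should be read under the standing assumption $\beta_1>0$; this does not affect the validity of the argument.
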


Equation~\eqref{eq:critical-gradient-zero} also shows that if $t \in \cA_+$ and
$\ip{\bar u(t),v(t)} < 0$ for some $v \in C(\bar u)$, then $\bar\mu(t) = 0$
follows.

In the proof of Theorem~\ref{thm:sonc}, we will need properties of
$\ell_r'(\bar u;w)$ with directions $w$ which are possibly not in the critical cone,
but derived from some $v \in C(\bar u)$. The next lemma gives the required results.

\begin{lemma}
  \label{lem:critical-direct-multiple-prop}
  Let $v \in C(\bar u)$ be given and let $w \in \Le^r(0,\sT;\Le^2(\Omega))$ be
  another function such that for almost all
  $t \in \setof{\|\bar u\|_{\Le^2(\Omega)} = 0}$, $w(t)$ is either $v(t)$ or
  zero. Then
  \begin{equation*}
    F'(\bar u)w + \beta_1\sj'(\bar u;w) = -\int_{\cA_+} \bar \mu(t) \frac{\bip{\bar
        u(t),w(t)}}{\|\bar u(t)\|_{\Le^2(\Omega)}} \dd t.
  \end{equation*}
  Further, there holds $\chi_MC(\bar u) \subseteq C(\bar u)$ for any measurable set
  $M \subseteq (0,\sT)$.
\end{lemma}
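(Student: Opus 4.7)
The plan is to decompose $(0,\sT)$ into the four subsets $\cA_0$, $\cA_+$, $\cI_+ \defn \cI \cap [\|\bar u\|_{\Le^2(\Omega)} > 0]$ and $\cI_0 \defn \cI \cap [\bar u = 0]$, and to evaluate the contribution of each to $F'(\bar u)w + \beta_1 \sj'(\bar u;w)$ separately, using the explicit expressions for $F'$ from Lemma~\ref{lem:f-c2} and $\sj'$ from Proposition~\ref{prop:L1L2-subdiff-character} together with the Lagrange gradient equation~\eqref{eq:FONC-multiplier} and the inactive-set identity~\eqref{eq:FONC-pointwise-inactive-zero}.

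On $\cA_0$ we have $\omega = 0$ and hence $\bar u = 0$; since $v \in \cT(\bar u)$, the tangent cone definition forces $v = 0$ a.e., and by hypothesis $w$ inherits this, so $\cA_0$ contributes zero. On $\cA_+$, $\bar\lambda = \bar u/\|\bar u\|_{\Le^2(\Omega)}$, and~\eqref{eq:FONC-multiplier} rewrites $\bar p + \beta_2 \bar u = -(\beta_1 + \bar\mu)\bar u/\|\bar u\|_{\Le^2(\Omega)}$; upon substitution the $\beta_1$ parts cancel and this piece reduces to exactly $-\int_{\cA_+} \bar\mu\, \bip{\bar u,w}/\|\bar u\|_{\Le^2(\Omega)} \dd t$. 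The same substitution on $\cI_+$, now with $\bar\mu \equiv 0$, produces zero.

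The delicate piece is $\cI_0$: there $\bar p = -\beta_1 \bar\lambda$ and $\|\bar\lambda\|_{\Le^2(\Omega)} \leq 1$, so the contribution equals $\beta_1 \int_{\cI_0} \bigl(\|w\|_{\Le^2(\Omega)} - \bip{\bar\lambda, w}\bigr) \dd t$. To eliminate it I would run the very same decomposition once more, but with $v$ in place of $w$: the $\cA_+$ piece is nonnegative because $\bip{\bar u,v} \leq 0$ there (by $v \in \cT(\bar u)$) and $\bar\mu \geq 0$, while the $\cI_0$ piece is nonnegative by the subgradient bound. Since their sum equals $F'(\bar u)v + \beta_1 \sj'(\bar u;v) = 0$, each must vanish pointwise a.e.; in particular, $\bip{\bar\lambda(t),v(t)} = \|v(t)\|_{\Le^2(\Omega)}$ a.e.\ on $\cI_0$. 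The hypothesis $w(t) \in \{v(t),0\}$ then transfers this identity to $w$ pointwise and makes the $\cI_0$ contribution vanish. Assembling the four pieces yields the first claim.

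For the second assertion, $\chi_M v$ manifestly lies in $\cT(\bar u)$ because scaling by $\chi_M \in \{0,1\}$ preserves the pointwise inequalities defining the tangent cone, and $\chi_M v$ obviously satisfies the hypothesis of the first claim. Applying the first claim and then invoking~\eqref{eq:critical-gradient-zero} gives $F'(\bar u)(\chi_M v) + \beta_1 \sj'(\bar u;\chi_M v) = -\int_{\cA_+ \cap M} \bar\mu\, \bip{\bar u,v}/\|\bar u\|_{\Le^2(\Omega)} \dd t = 0$, so $\chi_M v \in C(\bar u)$. The main obstacle throughout is the bookkeeping on $\cI_0$: since $\bar\lambda$ is only constrained by $\|\bar\lambda\|_{\Le^2(\Omega)} \leq 1$ there, one has to extract the missing rigidity $\bip{\bar\lambda,v} = \|v\|_{\Le^2(\Omega)}$ from the critical cone condition on $v$ itself via the nonnegativity-forces-zero argument just sketched.
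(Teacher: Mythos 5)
Your proof is correct and follows essentially the same route as the paper: both collapse $F'(\bar u)w+\beta_1\sj'(\bar u;w)$ to the $\cA_+$ multiplier term by combining the pointwise stationarity on $\cI$, the multiplier equation on $\cA_+$, and the rigidity $\bip{\bar\lambda(t),v(t)}=\|v(t)\|_{\Le^2(\Omega)}$ on $\bsetof{\|\bar u\|_{\Le^2(\Omega)}=0}$, which transfers to $w$ precisely because $w(t)\in\{v(t),0\}$ there. The only difference is that you derive that rigidity identity yourself from the sign decomposition of $F'(\bar u)v+\beta_1\sj'(\bar u;v)=0$ (each nonnegative piece must vanish), whereas the paper imports it as~\eqref{eq:lambda-critical-ident} from the cited reference, and your treatment of the second assertion via~\eqref{eq:critical-gradient-zero} matches the paper's.
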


\begin{proof}
  Let $v \in C(\bar u)$. Arguing as for~\cite[(4.12)]{CHW17}, we obtain that
  \begin{equation}
    \label{eq:lambda-critical-ident}
    \bar \lambda(t) = \frac{v(t)}{\|v(t)\|_{\Le^2(\Omega)}} \quad \text{f.a.a.}~t \in
    \bsetof{\|\bar u\|_{\Le^2(\Omega)} = 0} \cap   \bsetof{\|v\|_{\Le^2(\Omega)} \neq 0}.
  \end{equation}
  Now let $w(t)$ be either $v(t)$ or zero for almost all
  $t \in \setof{\|\bar u\|_{\Le^2(\Omega)} = 0}$. Then
  \begin{equation*}
    \bip{v(t),w(t)} = \|v(t)\|_{\Le^2(\Omega)}\|w(t)\|_{\Le^2(\Omega)} \quad
    \text{f.a.a.}~t \in \bsetof{\|\bar u\|_{\Le^2(\Omega)} = 0}. 
  \end{equation*}
  Using this together with~\eqref{eq:lambda-critical-ident}, we find
  \begin{equation*}
    \int_{\setof{\|\bar u\|_{\Le^2(\Omega)} = 0}} \bip{\bar\lambda(t),w(t)} \dd t =
    \int_{\setof{\|\bar u\|_{\Le^2(\Omega)} = 0}} \|w(t)\|_{\Le^2(\Omega)} \dd t
  \end{equation*}
  and thus, with~\eqref{eq:FONC-pointwise-inactive-zero} and~\eqref{eq:FONC-multiplier},
  \begin{align}
    F'(\bar u)w + \beta_1 \sj'(\bar u;w) & = \int_0^\sT \bip{\bar p(t) + \beta_2 \bar
      u(t),w(t)} \dd t + \beta_1 \int_{\setof{\|\bar u\|_{\Le^2(\Omega)} = 0}}
    \|w(t)\|_{\Le^2(\Omega)}\dd t\notag\\ & \qquad  + 
    \beta_1 \int_{\setof{\|\bar u\|_{\Le^2(\Omega)}\neq0 }}
    \bip{\bar\lambda(t),w(t)}\dd t\notag 
    \\ &  = 
    \int_{\cA_+} \bip{\bar p(t) + \beta_1 \bar\lambda(t) + \beta_2 \bar
      u(t),w(t)}\dd t \notag\\  & =  - \int_{\cA_+} \bar
    \mu(t)  \frac{\bip{\bar
        u(t),w(t)}}{\|\bar u(t)\|_{\Le^2(\Omega)}}\dd t.\label{eq:parallelformula} 
  \end{align}
  (See Proposition~\ref{prop:L1L2-subdiff-character} for the derivative formula for
  $\sj'(\bar u;w)$.) This was the first claim.

  Let now $w = \chi_M v$ for some measurable set
  $M \subseteq (0,\sT)$. Then~\eqref{eq:parallelformula} holds true. Moreover,
  $\bar\mu(t) \|\bar u(t)\|_{\Le^2(\Omega)}^{-1}\ip{\bar u(t),w(t)} = 0$ for almost
  all $t \in \cA_+$ by~\eqref{eq:lambda-critical-ident} which again
  in~\eqref{eq:parallelformula} shows that
  $F'(\bar u)w + \beta_1 \sj'(\bar u;w) = 0$, so $w \in C(\bar u)$.
\end{proof}

We further want to use second-order Taylor approximations for $\sj$. These are not
immediate since we have already seen that the substitute for the second order
derivative $\sj''(\bar u;v^2)$ may be infinite for some directions $v$.

Consider $\Upsilon_2(f) := \|f\|_{\Le^2(\Omega)}$. We have
\begin{align*}
  \Upsilon_2'(f)h & = \|f\|^{-1}_{\Le^2(\Omega)} \bip{f,h} ,\\
   \Upsilon_2''(f)h^2 & = 
   \|f\|_{\Le^2(\Omega)}^{-1}\|h\|_{\Le^2(\Omega)}^2 -
   \|f\|_{\Le^2(\Omega)}^{-3} \bip{f,h}^2
\intertext{for $f,h \in \Le^2(\Omega)$ with $f \neq 0$, cf.\ the appendix, and we will also need}
  \Upsilon_2'''(f)h^3 & = 3\,\|f\|^{-3}_{\Le^2(\Omega)}
  \Bigl[\|f\|^{-2}_{\Le^2(\Omega)}\bip{f,h}^3 - 
  \|h\|_{\Le^2(\Omega)}^2 \bip{f,h}\Bigr]
\end{align*}
now; this is obtained by the chain rule
since $\Upsilon_2''$ is composed of continuously differentiable functions away from zero.

\begin{lemma}
  \label{lem:taylor-expansion-norm}
  Let $M \subseteq (0,\sT)$ be a measurable set.
  \begin{enumerate}[leftmargin=*]
  \item\label{item:taylor-expansion-norm-ineq} Let $f,h \in \Le^1(M;\Le^2(\Omega))$. Then
    \begin{equation*}
      \int_M \Bigl(\Upsilon_2\bigl(f(t)+h(t)\bigr) - \Upsilon_2(f(t))\Bigr) \dd t \geq \int_M
      \Upsilon_2'(f(t))h(t)  \dd t.
    \end{equation*}
  \item\label{item:taylor-expansion-norm-quadratic} Let moreover
    $\eta \in \Le^\infty(M)$. Suppose that there is a number $\alpha > 0$ such that
    $\|f(t)\|_{\Le^2(\Omega)} \geq \alpha$ for almost all $t \in M$. Then
    $h \mapsto \int_M \eta(t)) \Upsilon_2''(f(t))h(t)^2 \dd t$ defines a continuous
    quadratic form on $\Le^2(M;\Le^2(\Omega))$. If $\eta \geq 0$ a.e.\ on $M$, then
    the quadratic form is convex.
  \item\label{item:taylor-expansion-norm-integrated} Consider further
    $h \in \Le^3(M;\Le^2(\Omega))$. If for all functions $\theta \colon M \to [0,1]$
    there is $\alpha_\theta > 0$ such that
    $\|f(t) + \theta(t)h(t)\|_{\Le^2(\Omega)} \geq \alpha_\theta$ for almost all
    $t \in M$, then we have the Taylor expansion
    \begin{multline}
      \int_M \eta(t)\Bigl(\Upsilon_2\bigl(f(t)+h(t)\bigr) - \Upsilon_2\bigl(f(t)\bigr)\Bigr) \dd
      t \\ = \int_M \eta(t)\Bigl(\Upsilon_2'\bigl(f(t)\bigr)h(t) + \frac12
      \Upsilon_2''\bigl(f(t)\bigr)h(t)^2\Bigr) \dd t +
      \cO\bigl(\|h\|_{\Le^3(M;\Le^2(\Omega))}^{3}\bigr).\label{eq:norm-taylor}
    \end{multline}
  \end{enumerate}
\end{lemma}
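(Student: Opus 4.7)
\textbf{Proof proposal for Lemma~\ref{lem:taylor-expansion-norm}.} Parts (i) and (ii) are essentially pointwise facts for the real-valued convex function $\Upsilon_2 \colon \Le^2(\Omega) \to \R$ followed by integration, whereas part (iii) requires a preliminary uniformization of the lower bound hypothesis along the segment $[f,f+h]$.

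For \textbf{(i)}, I would note that $\Upsilon_2$ is convex on $\Le^2(\Omega)$ and Fr\'echet differentiable away from $0$, with gradient $g \mapsto g/\|g\|_{\Le^2(\Omega)}$. Hence the subgradient inequality gives, pointwise a.e.\ on the set where $f(t) \neq 0$,
\begin{equation*}
  \Upsilon_2\bigl(f(t)+h(t)\bigr) - \Upsilon_2\bigl(f(t)\bigr) \geq
  \Upsilon_2'\bigl(f(t)\bigr)h(t) = \|f(t)\|_{\Le^2(\Omega)}^{-1}\bip{f(t),h(t)},
\end{equation*}
while on $\{f = 0\}$ (using the convention $0/0 = 0$) the left-hand side equals $\|h(t)\|_{\Le^2(\Omega)} \geq 0$, matching the right-hand side. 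The integrand on the right is dominated by $\|h(t)\|_{\Le^2(\Omega)}$ and is therefore in $\Le^1(M)$, so integration over $M$ yields the claim.

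For \textbf{(ii)}, I would use the explicit formula for $\Upsilon_2''$ to estimate, using Cauchy-Schwarz,
\begin{equation*}
  0 \leq \Upsilon_2''(f(t))h(t)^2 \leq 2\|f(t)\|_{\Le^2(\Omega)}^{-1}\|h(t)\|_{\Le^2(\Omega)}^2 \leq 2\alpha^{-1}\|h(t)\|_{\Le^2(\Omega)}^2
\end{equation*}
a.e.\ on $M$, the nonnegativity being the Cauchy-Schwarz inequality in disguise. Continuity of the quadratic form on $\Le^2(M;\Le^2(\Omega))$ then follows with constant $2\alpha^{-1}\|\eta\|_{\Le^\infty(M)}$, and if $\eta \geq 0$ the integrand is pointwise nonnegative, whence the form is nonnegative and thus convex.

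For \textbf{(iii)}, the main obstacle is upgrading the hypothesis (which allows $\alpha_\theta$ to degenerate as the measurable function $\theta$ varies) to a single uniform lower bound $\alpha > 0$ on the whole segment. I would resolve this by a measurable selection argument: for a.e.\ $t$, the function $\theta \mapsto \|f(t)+\theta h(t)\|_{\Le^2(\Omega)}^2$ is a nonnegative quadratic polynomial, so its minimizer $\theta^\star(t) \in [0,1]$ is the projection onto $[0,1]$ of $-\ip{f(t),h(t)}/\|h(t)\|_{\Le^2(\Omega)}^2$ (or $0$ if $h(t)=0$). This $\theta^\star$ is measurable, so the hypothesis yields $\alpha \defn \alpha_{\theta^\star} > 0$ with $\|f(t)+\theta h(t)\|_{\Le^2(\Omega)} \geq \alpha$ for all $\theta \in [0,1]$ and a.e.\ $t \in M$. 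With this, $\Upsilon_2$ is $\C^3$ on the segment $[f(t),f(t)+h(t)]$ for a.e.\ $t$, and the second-order Taylor expansion with integral remainder
\begin{equation*}
  \Upsilon_2(f+h) = \Upsilon_2(f) + \Upsilon_2'(f)h + \tfrac12\Upsilon_2''(f)h^2 + \tfrac12\int_0^1 (1-\theta)^2 \Upsilon_2'''(f+\theta h)h^3 \dd \theta
\end{equation*}
applies pointwise. The given formula for $\Upsilon_2'''$ together with Cauchy-Schwarz gives $|\Upsilon_2'''(g)h^3| \leq 6\|g\|_{\Le^2(\Omega)}^{-2}\|h\|_{\Le^2(\Omega)}^3$, so the pointwise remainder is bounded by $\alpha^{-2}\|h(t)\|_{\Le^2(\Omega)}^3$. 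Multiplication by $\eta$ and integration over $M$ then produces the $\cO(\|h\|_{\Le^3(M;\Le^2(\Omega))}^3)$ term, completing~\eqref{eq:norm-taylor}.
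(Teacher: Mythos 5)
Your proposal is correct; parts differ from the paper's argument in instructive ways. For part~\eqref{item:taylor-expansion-norm-ineq} the paper uses a pointwise second-order Taylor expansion with Lagrange remainder and discards the nonnegative term $\frac12\Upsilon_2''(f+\vartheta h)h^2$, whereas you invoke the subgradient inequality for the convex function $\Upsilon_2$; your route is arguably cleaner, since it avoids any (implicit) differentiability requirement along a segment that may pass through the origin, and it handles the set $\{f=0\}$ explicitly. Part~\eqref{item:taylor-expansion-norm-quadratic} coincides with the paper's argument. For part~\eqref{item:taylor-expansion-norm-integrated} the paper simply writes the Lagrange form of the remainder with an intermediate function $\vartheta\colon M\to[0,1]$ and applies the hypothesis to that particular $\vartheta$ (the hypothesis quantifies over \emph{all} functions $\theta$, so no measurability of $\vartheta$ is needed); you instead first uniformize the lower bound over the whole segment via the measurable minimizer $\theta^\star$ and then use the integral form of the remainder. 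This is slightly more work than necessary, but it is correct and yields the stronger statement that $\|f(t)+\theta h(t)\|_{\Le^2(\Omega)}\geq\alpha$ uniformly in $\theta\in[0,1]$, which makes the $\C^3$ regularity of $\Upsilon_2$ along the segment transparent. The cubic bound $|\Upsilon_2'''(g)h^3|\leq 6\|g\|_{\Le^2(\Omega)}^{-2}\|h\|_{\Le^2(\Omega)}^3$ and the resulting $\cO(\|h\|_{\Le^3(M;\Le^2(\Omega))}^3)$ estimate match the paper's.
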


\begin{proof}
  \begin{enumerate}[wide, labelwidth=!, labelindent=0pt]
  \item We consider the Taylor expansion for
    $\Upsilon_2(f(t)+ h(t))$ for almost every $t \in M$, with a function
    $\vartheta \colon M \to [0,1]$:
    \begin{equation*}
      \Upsilon_2\bigl(f(t)+h(t)\bigr) - \Upsilon_2\bigl(f(t)\bigr) = \Upsilon_2'\bigl(f(t)\bigr)h(t) +
      \frac12 \Upsilon_2''\bigl(f(t) + \vartheta(t)h(t)\bigr)h(t)^2.
    \end{equation*}
    Since $\Upsilon_2''(g)w^2 \geq 0$ for all $g,w \in \Le^2(\Omega)$, the claim follows
    from inserting this in the foregoing inequality and integrating over $M$. The
    integrals are finite due to $f,h \in \Le^1(M;\Le^2(\Omega))$.
  \item Under the assumptions on $f$, we find
    \begin{equation*}
      \int_M \bigl|\eta(t)\Upsilon_2''(f(t))h(t)^2\bigr| \dd t \leq 2 \alpha^{-1}
      \|\eta\|_{\Le^\infty(M)} 
      \int_M\bigl\|h(t)\bigr\|_{\Le^2(\Omega)}^2 \dd t.
    \end{equation*}
    This implies the continuity assertion. Moreover, a quadratic form is convex if
    and only if it is nonnegative, and the latter is ensured by $\eta \geq 0$ a.e.\
    on $M$.
  \item For the Taylor expansion for the integrated $\Upsilon_2$, we again have from
    Taylor expansion for $\Upsilon_2(f(t)+ h(t))$ for almost every $t \in M$, with a
    function $\vartheta \colon M \to [0,1]$:
    \begin{multline*}
      \Upsilon_2\bigl(f(t)+h(t)\bigr) - \Upsilon_2\bigl(f(t)\bigr) \\ =
      \Upsilon_2'\bigl(f(t)\bigr)h(t) + \frac12 \Upsilon_2''\bigl(f(t)\bigr)h(t)^2 + \frac16
      \Upsilon_2'''\bigl(f(t) + \vartheta(t)h(t)\bigr)h(t)^3.
    \end{multline*}
    If
    $\bigl\|f(t) + \vartheta(t)h(t)\bigr\|_{\Le^2(\Omega)} \geq \alpha_\vartheta > 0$
    for almost all $t \in M$, then
    \begin{multline*}
      \int_M \bigl|\eta(t)\Upsilon_2'''\bigl(f(t) + \vartheta(t)h(t)\bigr)h(t)^3\bigr|
      \dd t \\ \leq 6 \alpha_\vartheta^{-2} \|\eta\|_{\Le^\infty(M)} \int_M
      \bigl\|h(t)\bigr\|_{\Le^2(\Omega)}^3 \dd t \in
      \cO\bigl(\|h\|_{\Le^3(M;\Le^2(\Omega))}^{3}\bigr).
    \end{multline*}
    The claim thus follows from multiplying the Taylor expansion for
    $\Upsilon_2(f(t) + h(t))$ by $\eta(t)$ and integrating over $M$. \qedhere
  \end{enumerate}
\end{proof}

We next give the proof of Theorem~\ref{thm:sonc}. The principal idea is to
approximate the critical direction $v \in C(\bar u)$ in multiple stages.

\begin{proof}[Proof of Theorem~\ref{thm:sonc}] Let $v \in C(\bar u)$. 
The proof is achieved as follows: We first suppose that $v \in  \Le^\infty(0,\sT;\Le^2(\Omega))$
and that
\begin{equation}
  \int_{\setof{\|\bar u\|_{\Le^2(\Omega)} \neq 0}} \frac{\|v(t)\|_{\Le^2(\Omega)}^2}{\|\bar
    u(t)\|_{\Le^2(\Omega)}} < \infty.\label{eq:prelim-assu}
\end{equation}
Since a multiple of this integral is an upper bound for
$\sj''(\bar u;v^2)$,~\eqref{eq:prelim-assu} implies that $\sj''(\bar u;v^2)$ is
finite. We then construct two-staged approximations $u_{\rho,k}$ of $\bar u$ such
that $u_{\rho,k} \to \bar u$ uniformly as $\rho \searrow 0$, as well as
$u_{\rho,k} \in \uad$ for $\rho > 0$ small enough and $k$ fixed. Another property we
need later is that $\|u_{\rho,k}(t)\|_{\Le^2(\Omega)} = \omega(t)$ for $t \in \cA_+$
with $\ip{\bar u(t),v(t)} = 0$. Since such constructed $u_{\rho,k}$ is feasible and
close to $\bar u$ for $\rho > 0$ sufficiently small, we then make the ansatz
$0 \leq L(u_{\rho,k},\bar\mu) - L(\bar u,\bar\mu)$ and pass to the limit in
$\rho$ and $k$ in the second order Taylor expansions there which gives the claim. For
this, we need and establish that the derivative
$v_k = \lim_{\rho\searrow 0} \rho^{-1}(u_{\rho,k} - \bar u)$ exists in
$\Le^\infty(0,\sT;\Le^2(\Omega))$ and satisfies $v_k \to v$ in
$\Le^r(0,\sT;\Le^2(\Omega))$ as $k \to \infty$. Finally, we remove the assumptions on
$v$ above.

\textbf{Step~1}: \emph{Construction of $u_{\rho,k}$.} Let $\alpha_k,\omega_k$ be
arbitrary positive sequences converging monotonically to zero. We define
\begin{multline*}
  N_k := \Bigl\{ t \in (0,\sT) \colon 0 < \|\bar u(t)\|_{\Le^2(\Omega)} <
  \alpha_k~~\text{or}~~(1-\alpha_k)\omega(t) < \|\bar u(t)\|_{\Le^2(\Omega)} < \omega(t)
  \\\text{or}~~0 \leq \omega(t) < \omega_k~~\text{or}~~\omega(t) >
  \omega_k^{-1}
  \Bigr\}.
\end{multline*}
Note that $\cA_0 \subset N_k$ and $|N_k \setminus \cA_0| \to 0$ as $k \to
\infty$. For $t \notin N_k$, we moreover have $\omega_k \leq \omega(t) \leq
\omega_k^{-1}$.
 Set
  \begin{equation*}
    u_{\rho,k}(t) =
    \begin{cases}
      \bar u(t) & \text{if}~t \in N_k,
      \\ \bigl(1-\omega_t^k(\rho)\bigr)\bar u(t) + \rho v(t)&\text{if}~t\in \cA_+
      \cap N_k^c~\text{and}~\ip{u(t),v(t)} = 0, \\
      (1-\rho\alpha_k)\bar u(t) + \rho v(t) &\text{if}~t \in \cA_+ \cap
      N_k^c~\text{and}~\ip{u(t),v(t)} < 0, \\\bar u(t) + \rho v(t)
      &\text{elsewhere},
    \end{cases}
  \end{equation*}
  with
  \begin{equation*}
    \omega_t^k(\rho) := 1- \sqrt{1-\frac{\rho^2
        \|v(t)\|_{\Le^2(\Omega)}^2}{\omega(t)^2}} \quad 
    \text{where}~|\rho|<\tfrac12 \omega_k\|v\|_{\Le^\infty(0,\sT;\Le^2(\Omega))}^{-1}.
  \end{equation*}
  The function $\omega_t^k$ is chosen exactly such that
  $\|u_{\rho,k}(t)\|_{\Le^2(\Omega)} = \omega(t)$ for $t \in \cA_+ \cap N_k^c$ with
  $\ip{\bar u(t),v(t)} = 0$.

  We have $u_{\rho,k} \in \uad$ for $k$ fixed and $\rho$ sufficiently small as we
  observe as follows:
\begin{itemize}[leftmargin=1.5em]
\item if $t \in N_k$, then $u_{\rho,k}(t) = \bar u(t)$ which is feasible,
\item if $t \notin N_k$ and $\|\bar u(t)\|_{\Le^2(\Omega)} \leq
  (1-\alpha_k)\omega(t)$, then for $\rho \leq \alpha_k\omega_k\|v\|_{\Le^\infty(0,\sT;\Le^2(\Omega))}^{-1}$:
  \begin{equation*}
    \bigl\|u_{\rho,k}(t)\bigr\|_{\Le^2(\Omega)} = \bigl\|\bar u(t) + \rho v(t)\bigr\|_{\Le^2(\Omega)} \leq
    (1-\alpha_k)\omega(t) +  \rho \|v(t)\|_{\Le^2(\Omega)} \leq \omega(t),
  \end{equation*}
\item if $t \in \cA_+ \cap N_k^c$ and $\ip{u(t),v(t)} = 0$, then
  $\|u_{\rho,k}(t)\|_{\Le^2(\Omega)} = \omega(t)$,
\item if $t \in \cA_+ \cap N_k^c$ and $\ip{u(t),v(t)} < 0$, then
  \begin{align*}
    \bigl\|u_{\rho,k}(t)\bigr\|_{\Le^2(\Omega)}^2 \leq \omega(t)^2 \quad  & \iff \quad
    \bigl(1-\rho\alpha_k\bigr)^2\omega(t)^2 + \rho^2 \|v(t)\|_{\Le^2(\Omega)}^2 \leq
    \omega(t)^2 \\ 
    &\iff \quad \bigl(-2\alpha_k + \rho\alpha_k^2\bigr)\omega(t)^2 + \rho
    \|v(t)\|^2_2 \leq 0
  \end{align*}
  and the latter is satisfied uniformly in $t$ for this case if
  \begin{equation*}
    \rho \leq \frac{2\alpha_k\omega_k}{\alpha_k^2 \omega_k^{-2} +
      \|v\|_{\Le^\infty(0,\sT;\Le^2(\Omega))}}.
  \end{equation*}
\end{itemize}

\textbf{Step~2}: \emph{Limits as $\rho \searrow 0$ and $k \to \infty$}.
It is clear that $u_{\rho,k}(t) \to \bar u(t)$ in $\Le^2(\Omega)$ as
$\rho \searrow 0$ for almost every $t \in (0,\sT)$. We show that this convergence is
in fact uniform. First, note that due to
$\|\bar u(t)\|_{\Le^2(\Omega)} = \omega(t) \leq \omega_k^{-1}$ on $\cA_+ \cap N_k^c$,
we have $\bar u \in \Le^\infty(\cA_+ \cap N_k^c;\Le^2(\Omega))$. It follows that
$w_{\rho,k} \defn u_{\rho,k}- \bar u \in \Le^\infty(0,\sT;\Le^2(\Omega))$. For
$t \in \cA_+ \cap N_k^c$, we have
\begin{equation}\label{eq:gamma-uniform-bound}
  0 \leq \omega_t^k(\rho) \leq 1 -
  \sqrt{1-\frac{\rho^2\|v\|_{\Le^\infty(0,\sT;\Le^2(\Omega))}^2}{\omega_k^2}} \leq
  \frac{\rho^2\|v\|_{\Le^\infty(0,\sT;\Le^2(\Omega))}^2}{\omega_k^2},
\end{equation}
so $\omega_t^k(\rho)$ tends to zero uniformly in $t \in \cA_+ \cap N_k^c$ as
$\rho \searrow 0$ and hence
\begin{equation}\label{eq:urhok-Op}
  \bigl\|w_{\rho,k}\bigr\|_{\Le^\infty(0,\sT;\Le^2(\Omega))} = \bigl\|u_{\rho,k}-\bar
  u\bigr\|_{\Le^\infty(0,\sT;\Le^2(\Omega))} \in \cO(\rho). 
\end{equation}

Next, set
\begin{equation*}
  v_k(t) := \begin{cases}
    0 & \text{if}~t \in N_k, \\
    v(t) -
    \alpha_k \bar u(t) &\text{if}~t\in\cA_+ \cap N_k^c~\text{and}~\ip{\bar
      u(t),v(t)} < 0,  \\ v(t) 
    &\text{elsewhere}.
  \end{cases}
\end{equation*}
Then
\begin{equation}
v_k \to v \quad \text{in}~\Le^r\bigl(0,\sT;\Le^{2}(\Omega)\bigr)  \quad\text{as}~k \to \infty.\label{eq:vk-conv}
\end{equation}
Further,
$\omega_t^k(\cdot)$ is continuously differentiable with
$(\omega_t^k)'(0) = 0 = \omega_t^k(0)$. Thus
\begin{equation*}
  v_k(t) = \lim_{\rho\searrow 0}
  \rho^{-1} w_{\rho,k}(t)
  \quad \text{for almost every}~t \in
  (0,\sT). 
\end{equation*}
We again show that this convergence is uniform. In fact, for
$t \in \cA_+ \cap N_k^c$, we have
$\omega_t^k(\rho) = \omega_t^k(0) + (\omega_t^k)'(0)\rho + \omega_t^k(\rho)$ and,
via~\eqref{eq:gamma-uniform-bound},
\begin{equation*}
  0 \leq \frac{\omega_t^k(\rho)-\omega_t^k(0)}{\rho} =\frac{\omega_t^k(\rho)}{\rho}
  \leq  \frac{\rho\|v\|_{\Le^\infty(0,\sT;\Le^2(\Omega))}}{\omega_k^2}.
\end{equation*}
Hence the difference quotients for
$(\omega_t^k)'(0) = \lim_{\rho \searrow 0} \rho^{-1}(\omega^k_t(\rho)-\omega^k_t(0))$
converge uniformly in $t \in \cA_+ \cap N_k^c$.  We obtain that
\begin{equation}
  v_k = \lim_{\rho\searrow 0} \rho^{-1}w_{\rho,k}
  \quad  \text{in}~\Le^\infty(0,\sT;\Le^2(\Omega)).\label{eq:rho-vk-conv}
\end{equation}

\textbf{Step 3}: \emph{Core of the proof}.
We first check that, for $t \in \cA_+ $,
\begin{equation*}
\bar\mu(t)\bigl(\|u_{\rho,k}(t)\|_{\Le^2(\Omega)} - \|\bar u(t)\|_{\Le^2(\Omega)}\bigr) = 0.
\end{equation*}
This is true because of the following:
\begin{itemize}[leftmargin=1.5em]
\item for $t \in \cA_+ \cap N_k$, we have $u_{\rho,k}(t) = \bar u(t)$,
\item for $t \in \cA_+ \cap N_k^c $ with $\ip{u(t),v(t)} = 0$, we have
  $\|u_{\rho,k}(t)\|_{\Le^2(\Omega)} = \|\bar u(t)\|_{\Le^2(\Omega)}$ by construction,
\item and for $t \in \cA_+ \cap N_k^c$ with $\ip{u(t),v(t)} < 0$ we had already
  seen that $\bar \mu(t) = 0$ follows from~\eqref{eq:critical-gradient-zero}.
\end{itemize}
For $\rho$ small enough and fixed $k$, we have $u_{\rho,k} \in \uad$ and
$\ell_r(\bar u) \leq \ell_r(u_{\rho,k})$ due to local optimality of $\bar u$
and~\eqref{eq:urhok-Op}. We thus make the ansatz
\begin{equation}\label{eq:nsoc-ansatz}
  0 \leq \ell_r(u_{\rho,k}) - \ell_r(\bar u) + \int_{\cA_+ } \bar
  \mu(t)\bigl(\|u_{\rho,k}(t)\|_{\Le^2(\Omega)} - \|\bar u(t)\|_{\Le^2(\Omega)}\bigr)
  \dd t.
\end{equation}
We want to employ Taylor expansions for $\ell_r = F + \beta_1 \sj$ and the multiplier
term. The direction will be $w_{\rho,k} := u_{\rho,k}-\bar u$. For $F$, this is
easily done since $F$ is twice continuously differentiable on
$\Le^r(0,\sT;\Le^2(\Omega))$ by Lemma~\ref{lem:f-c2}, but the nonsmooth terms require
some justification in order to use Lemma~\ref{lem:taylor-expansion-norm}. For both
the reference point is $\bar u$. Consider
\begin{multline}
  \sj(u_{\rho,k}) - \sj(\bar u) = \int_{[\|\bar u\|_{\Le^2(\Omega)} = 0]}
  \|u_{\rho,k}(t)\|_{\Le^2(\Omega)} \dd t \\ + \int_{[\|\bar u\|_{\Le^2(\Omega)} \neq
    0]} \bigl(\|u_{\rho,k}(t)\|_{\Le^2(\Omega)} - \|\bar u(t)\|_{\Le^2(\Omega)}\bigr)
  \dd t.\label{eq:nonsmooth-taylor-ansatz}
\end{multline}
We focus on the second integral. By construction, $u_{\rho,k}(t) = \bar u(t)$ if
$t \in \setof{0 < \|\bar u\|_{\Le^2(\Omega)} < \alpha_k}$. Hence
\begin{multline*}
  \int_{\setof{\|\bar u\|_{\Le^2(\Omega)} \neq 0}}
  \bigl(\|u_{\rho,k}(t)\|_{\Le^2(\Omega)} - \|\bar u(t)\|_{\Le^2(\Omega)}\bigr) \dd t
  \\ = \int_{\setof{\|\bar u\|_{\Le^2(\Omega)} \geq \alpha_k}}
  \bigl(\|u_{\rho,k}(t)\|_{\Le^2(\Omega)} - \|\bar u(t)\|_{\Le^2(\Omega)}\bigr) \dd
  t.
\end{multline*}
Let $t \in \setof{\|\bar u\|_{\Le^2(\Omega)} \geq \alpha_k}$ and let
$\theta_{\rho,k}(t) \in [0,1]$. Then, for $\rho$ sufficiently small we estimate
\begin{align*}
  \bigl\| u_{\rho,k}(t) + \theta_{\rho,k}(t)w_{\rho,k}(t)\bigr\|_{\Le^2(\Omega)} & \geq
  \|\bar u(t)\|_{\Le^2(\Omega)} -  
  \theta_{\rho,k}(t)\|w_{\rho,k}(t)\|_{\Le^2(\Omega)} \\ &\geq \alpha_k -
  \|w_{\rho,k}(t)\|_{\Le^2(\Omega)} \geq \frac{\alpha_k}2,
\end{align*}
since we had $\|w_{\rho,k}\|_{\Le^\infty(0,\sT;\Le^2(\Omega))} \in \cO(\rho)$,
cf.~\eqref{eq:urhok-Op}. Thus the prerequisites for the Taylor expansion in
Lemma~\ref{lem:taylor-expansion-norm} are satisfied and we obtain, using~\eqref{eq:urhok-Op},
\begin{multline*}
  \int_{\setof{\|\bar u\|_{\Le^2(\Omega)} \neq 0}}
  \bigl(\|u_{\rho,k}(t)\|_{\Le^2(\Omega)} - \|\bar u(t)\|_{\Le^2(\Omega)}\bigr)\dd t
  = \int_{\setof{\|\bar u\|_{\Le^2(\Omega)} \neq 0}} \frac{\bip{\bar
      u(t),w_{\rho,k}(t)}}{ \|\bar u(t)\|_{\Le^2(\Omega)}} \dd t \\ + \frac12
  \int_{\setof{\|\bar u\|_{\Le^2(\Omega)} \neq 0}} \|\bar u(t)\|_{\Le^2(\Omega)}^{-1}
  \left[\bigl\|w_{\rho,k}(t)\bigr\|_{\Le^2(\Omega)}^2 - \left( \frac{\bip{\bar u(t)\,
          w_{\rho,k}(t)}}{\| \bar u(t)\|_{\Le^2(\Omega)}}\right)^2\right]\dd t +
  \cO\bigl(\rho^3\bigr).
\end{multline*}
Re-inserting into~\eqref{eq:nonsmooth-taylor-ansatz}, we finally get
\begin{equation*}
  \sj(u_{\rho,k}) - \sj(\bar u) = \sj'(\bar u;w_{\rho,k}) + \sj''(\bar u;w_{\rho,k}^2) + \cO(\rho^3).
\end{equation*}
For the multiplier term, we argue analogously (thereby using
$\mu \in \Le^\infty(\cA_+)$ for Lemma~\ref{lem:taylor-expansion-norm}) to show that
\begin{multline*}
  \int_{\cA_+ } \bar \mu(t)\bigl(\|u_{\rho,k}(t)\|_{\Le^2(\Omega)} - \|\bar
  u(t)\|_{\Le^2(\Omega)}\bigr) \dd t = \int_{\cA_+ } \bar\mu(t) \frac{\bip{\bar
      u(t),w_{\rho,k}(t)}}{\|\bar u(t)\|_{\Le^2(\Omega)}} \dd t\\ + \frac12
  \int_{\cA_+ } \bar\mu(t) \|\bar u(t)\|_{\Le^2(\Omega)}^{-1} \left[
    \bigl\|w_{\rho,k}(t)\bigr\|_{\Le^2(\Omega)}^2 - \left(\frac{\bip{\bar u(t), w_{\rho,k}(t)}}{\|
        \bar u(t)\|_{\Le^2(\Omega)}}\right)^2\right] \dd t + \cO\bigl(\rho^3\bigr).
\end{multline*}
We thus obtain from the ansatz~\eqref{eq:nsoc-ansatz}, with some function
$\vartheta_{\rho,k} \colon (0,\sT) \to [0,1]$:
\begin{align*}
  0 & \leq F'(\bar
  u)w_{\rho,k} + \beta_1 \sj'(\bar u;w_{\rho,k}) + \int_{\cA_+} \bar \mu(t) \frac{\bip{\bar u(t),w_{\rho,k}(t)}}{\|\bar
  u(t)\|_{\Le^2(\Omega)}} \dd t\\ & \quad +
  \frac12 \Bigl( F''\bigl(\bar u 
  + \vartheta_{\rho,k}w_{\rho,k}\bigr)w_{\rho,k}^2   +
  \beta_1 \sj''(\bar u;w_{\rho,k}^2)\Bigr) \\ & \quad +
  \frac12 \int_{\cA_+} \bar\mu(t) \|\bar u(t)\|_{\Le^2(\Omega)}^{-1} \left[\bigl\|w_{\rho,k}(t)\bigr\|_{\Le^2(\Omega)}^2 - \left(\frac{\bip{\bar
        u(t), w_{\rho,k}(t)}}{\| \bar
        u(t)\|_{\Le^2(\Omega)}}\right)^2\right] \dd t 
  +  \cO\bigl(\rho^3\bigr).
\end{align*}
But for $t \in \setof{\|\bar u\|_{\Le^2(\Omega)}=0}$, $w_{\rho,k}(t)$ equals either
$v(t)$ or $0$, hence we have seen in~\eqref{eq:parallelformula} that
\begin{equation*}
  F'(\bar
  u)(w_{\rho,k}) + \beta_1 \sj'(\bar u;w_{\rho,k}) = -\int_{\cA_+}
  \bar\mu(t) \frac{\bip{\bar u(t),w_{\rho,k}(t)}}{\|\bar u(t)\|_{\Le^2(\Omega)}} \dd t.
\end{equation*}
Inserting in the foregoing inequality and dividing by $\rho^2/2$, we find
\begin{align*}
  0 & \leq F''\bigl(\bar u
  + \vartheta_{\rho,k}w_{\rho,k}\bigr)v_{\rho,k}^2   +
  \beta_1 \sj''(\bar u;v_{\rho,k}^2) \\ & \qquad +
  \int_{\cA_+} \bar\mu(t) \|\bar u(t)\|_{\Le^2(\Omega)}^{-1} \left[
    \bigl\|v_{\rho,k}(t)\bigr\|_{\Le^2(\Omega)}^2 - \left(\frac{\bip{\bar
          u(t), v_{\rho,k}(t)}}{\| \bar
        u(t)\|_{\Le^2(\Omega)}}\right)^2\right] \dd t+
  \cO\bigl(\rho\bigr),
\end{align*}
where we have set $v_{\rho,k}(t) := \rho^{-1} w_{\rho,k}(t)$.

We let $\rho \searrow0$. Recall that $\lim_{\rho\searrow 0} w_{\rho,k} = 0$ and
$\lim_{\rho\searrow0} v_{\rho,k} = v_k$, both in $\Le^\infty(0,\sT;\Le^2(\Omega))$
by~\eqref{eq:rho-vk-conv} and~\eqref{eq:vk-conv}. It is thus immediate from
Lemma~\ref{lem:f-c2} that
\begin{equation*}
  F''(\bar u + \vartheta_{\rho,k}w_{\rho,k})v_{\rho,k}^2 \to F''(\bar u)v_k^2 \quad \text{as}~\rho \searrow 0.
\end{equation*}
For the two other terms, we use again that if
$t \in \setof{0 < \|\bar u\|_{\Le^2(\Omega)} < \alpha_k}$, then by construction
$v_{\rho,k}(t) = 0 = v_k(t)$. Hence it suffices to
consider the integrals on $\setof{\|\bar u\|_{\Le^2(\Omega)} \geq
  \alpha_k}$. Lemma~\ref{lem:taylor-expansion-norm}~\eqref{item:taylor-expansion-norm-quadratic}
shows that the substitutes for the second derivative induce continuous quadratic
forms on $\Le^2(\setof{\|\bar u\|_{\Le^2(\Omega)} \geq \alpha_k};\Le^2(\Omega))$ and
$\Le^2(\setof{\|\bar u\|_{\Le^2(\Omega)} \geq \alpha_k}\cap \cA_+;\Le^2(\Omega))$,
respectively. Using $\lim_{\rho\searrow0} v_{\rho,k} = v_k$, we thus
obtain 
\begin{multline}
    0  \leq F''(\bar u)v_k^2 + \beta_1 \sj''(\bar u;v_k^2) \\ + \int_{\cA_+}
    \bar\mu(t) \|\bar u(t)\|_{\Le^2(\Omega)}^{-1} \left[\|v_k(t)\|_{\Le^2(\Omega)}^2 -
      \left(\frac{\bip{\bar u(t), v_k(t)}}{\| \bar
          u(t)\|_{\Le^2(\Omega)}}\right)^2\right] \dd t.
  \label{eq:nsoc-k}
\end{multline}
  
We next pass to the limit in~\eqref{eq:nsoc-k} as $k \to \infty$, so $v_k \to v$. Now
the preliminary assumption from~\eqref{eq:prelim-assu} becomes important. Since
$v_k \to v$ in $\Le^r(0,\sT;\Le^2(\Omega))$, we know that the integrand $\xi_k(t)$ in
$\sj''(\bar u;v_k^2) = \int_0^\sT\xi_k(t)$ converges to the integrand in
$\sj''(\bar u;v^2)$ pointwise almost everywhere on $(0,\sT)$. It is moreover
nonnegative and bounded by
\begin{align*}
  \xi_k(t) &  \leq  \chi_{\setof{\|\bar u\|_{\Le^2(\Omega)} \neq 0}}(t)
  \frac{\|v_k(t)\|_{\Le^2(\Omega)}^2} {\|\bar
    u(t)\|_{\Le^2(\Omega)}} \\ & \leq  \chi_{\setof{\|\bar u\|_{\Le^2(\Omega)} \neq 0}}(t)
  \frac{\bigl(\|v(t)\|_{\Le^2(\Omega)}+\chi_{\setof{\ip{\bar u(t),v(t)} < 0} \,\cap \,\cA}(t)\,
    \cdot \,\omega(t)\bigr)^2} {\|\bar
    u(t)\|_{\Le^2(\Omega)}}
  \\ & \leq 2 \chi_{\setof{\|\bar u\|_{\Le^2(\Omega)} \neq 0}}(t) \left(
    \frac{\|v(t)\|_{\Le^2(\Omega)}^2 }{\|\bar
      u(t)\|_{\Le^2(\Omega)}} +  \omega(t)\right).
\end{align*}
By~\eqref{eq:prelim-assu}, the right-hand side is integrable over $(0,\sT)$. Thus we
can use the dominated convergence theorem to infer that
$\sj''(\bar u;v_k^2) \to \sj''(\bar u;v^2)$ as $k \to \infty$. We again argue analogously
for the multiplier term in~\eqref{eq:nsoc-k}. Since $F''(\bar u)$ is a
continuous bilinear form on $\Le^r(0,\sT;\Le^2(\Omega))$, we also have
$F''(\bar u)v_k^2 \to F''(\bar u)v^2$ as $k \to \infty$. Overall we obtain
\begin{multline*}
  0 \leq F''(\bar u)v^2 + \beta_1 \sj''(\bar u;v^2) \\ + \int_{\cA_+}
    \bar\mu(t) \|\bar u(t)\|_{\Le^2(\Omega)}^{-1} \left[\|v(t)\|_{\Le^2(\Omega)}^2 -
      \left(\frac{\bip{\bar u(t), v(t)}}{\| \bar
          u(t)\|_{\Le^2(\Omega)}}\right)^2\right] \dd t
\end{multline*}
for all $v \in C(\bar u) \cap \Le^\infty(0,\sT;\Le^2(\Omega))$
satisfying~\eqref{eq:prelim-assu}.

\textbf{Step 4}: \emph{Removing the additional assumptions}. To finally remove the assumptions on $v \in C(\bar u)$, we do another
approximation. Let $v\in C(\bar u)$ and let $\nu_\ell$ be a positive sequence
converging monotonically to zero. Define
\begin{equation*}
  N_\ell := \Bigl\{ t \in \bigl[\|\bar u\|_{\Le^2(\Omega)} \neq 0 \bigr] \colon 
  \|v(t)\|_{\Le^2(\Omega)} >
    \nu_\ell^{-1}\min(\|\bar u(t)\|_{\Le^2(\Omega)},1)\Bigr\}
  \end{equation*}
   and set $v_\ell \defn \chi_{N_\ell^c} v$.
Then for every $\ell$, we have $v_\ell \in \Le^\infty(0,\sT;\Le^2(\Omega))$ and
\begin{equation*}
  \int_{\setof{\|\bar u\|_{\Le^2(\Omega)} \neq 0}} \frac{\|v_\ell(t)\|_{\Le^2(\Omega)}^2}{\|\bar
    u(t)\|_{\Le^2(\Omega)}} \dd t = \int_{N_\ell^c}
  \frac{\|v(t)\|_{\Le^2(\Omega)}^2} {\|\bar
    u(t)\|_{\Le^2(\Omega)}} \dd t \leq   \nu_\ell^{-1} \int_{N_\ell^c}
  \|v(t)\|_{\Le^2(\Omega)} \dd t < \infty.
\end{equation*}
Moreover, by Lemma~\ref{lem:critical-direct-multiple-prop}, $v_\ell \in C(\bar u)$,
so by the above
\begin{multline*}
  0 \leq F''(\bar u)v_\ell^2 + \beta_1 \sj''(\bar u;v_\ell^2) \\ + \int_{\cA_+}
  \bar\mu(t) \|\bar u(t)\|_{\Le^2(\Omega)}^{-1} \left[\|v_\ell(t)\|_{\Le^2(\Omega)}^2
    - \left(\frac{\bip{\bar u(t), v_\ell(t)}}{\| \bar
        u(t)\|_{\Le^2(\Omega)}}\right)^2\right] \dd t.
\end{multline*}
We have $v_\ell \to v$ in $\Le^r(0,\sT;\Le^2(\Omega))$ as $\ell \to \infty$ due to
$|N_\ell| \to 0$.
So
$F''(\bar u)v_\ell^2 \to F''(\bar u)v^2$ as $\ell \to \infty$. Set moreover
\begin{equation*}
  \xi_\ell(t) := \chi_{N_\ell^c}(t)
  \, \cdot \,
  \|\bar u(t)\|_{\Le^2(\Omega)}^{-1}
  \left[\|v_\ell(t)\|_{\Le^2(\Omega)}^2 - \left(\frac{\bip{\bar
        u(t),v_\ell(t)}}{\|\bar u(t)\|_{\Le^2(\Omega)}}\right)^2\right] \dd t.
\end{equation*}
Then $0 \leq \xi_\ell(t) \leq \xi_{\ell+1}(t)$ for every $t \in (0,\sT)$ due to
$N_{\ell+1}\subseteq N_\ell$. Thus, the
monotone convergence theorem yields
\begin{equation*}
  \lim_{\ell\to\infty} \sj''(\bar u;v_\ell^2) =
  \lim_{\ell\to\infty}\int_0^\sT \xi_\ell(t) = \int_0^\sT
  \lim_{\ell\to\infty} \xi_\ell(t) = \sj''(\bar u;v^2).
\end{equation*}
Again, we argue analogously for the multiplier term and finally obtain
\begin{multline*}
  0 \leq F''(\bar u)v^2 + \beta_1 \sj''(\bar u;v^2) \\ + \int_{\cA_+}
  \bar\mu(t) \|\bar u(t)\|_{\Le^2(\Omega)}^{-1} \left[\|v(t)\|_{\Le^2(\Omega)}^2 -
    \left(\frac{\bip{\bar u(t), v(t)}}{\| \bar
        u(t)\|_{\Le^2(\Omega)}}\right)^2\right] \dd t
\end{multline*}
for \emph{all} $v \in C(\bar u)$. This was the claim.
\end{proof}

\subsubsection{Second order sufficient conditions}

We finally prove no-gap second order sufficient conditions for~\eqref{eq:ROCP} for
strong local solutions, i.e., in the $\Le^\infty(0,\sT;\Le^2(\Omega))$-sense. The
proof is fairly standard, but we again have to circumvent the possible singularity of
the substitute for the second derivative $\sj''(\bar u;v^2)$. We assume $\beta_2 > 0$
to enforce coercivity of the problem. The case $\beta_2 = 0$ is an open problem.

\begin{theorem}[Second order sufficient conditions]
  \label{thm:ssoc}
  Let $\beta_2 > 0$. Assume that $\bar u \in \uad$ satisfies
  \begin{equation}
    \label{eq:ssoc-condition}
    \ell_r''(\bar u;v^2) > 0 \quad \text{for all}~v \in C(\bar u)\setminus\{0\}.
  \end{equation}
  Then $\bar u$ is a strong local minimum, that is, there are $\eps,\delta > 0$ such
  that
  \begin{multline}
    \label{eq:quadratic-growth}
    \ell_r(\bar u) + \frac{\delta}2 \bigl\| u -
    \bar u\bigr\|_{\Le^2(0,\sT;\Le^2(\Omega))}^2 \leq \ell_r(u) \\ \text{for all}~u \in
    \uad~\text{with}~\bigl\|u - \bar u \bigr\|_{\Le^\infty(0,\sT;\Le^2(\Omega))} < \eps.
  \end{multline}
\end{theorem}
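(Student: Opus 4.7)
The approach is a blow-up/rescaling contradiction argument adapted to the $\Le^1(\Le^2)$ sparsity term and the pointwise Trust-Region constraint. Suppose~\eqref{eq:quadratic-growth} fails. Then there exists $(u_k) \subset \uad$ with $\|u_k - \bar u\|_{\Le^\infty(0,\sT;\Le^2(\Omega))} < 1/k$ and $\ell_r(u_k) < \ell_r(\bar u) + k^{-1}\rho_k^2$, where $\rho_k \defn \|u_k - \bar u\|_{\Le^2(0,\sT;\Le^2(\Omega))} > 0$. Setting $v_k \defn \rho_k^{-1}(u_k - \bar u)$ yields $\|v_k\|_{\Le^2(0,\sT;\Le^2(\Omega))} = 1$, and I extract a subsequence $v_k \wto v$ weakly in $\Le^2(0,\sT;\Le^2(\Omega))$. \textbf{Step 1: critical direction.} Feasibility of $u_k$ forces $v_k \in \cT(\bar u)$ (on $\cA_+$, $\|\bar u + \rho_k v_k\|_{\Le^2(\Omega)} \leq \omega = \|\bar u\|_{\Le^2(\Omega)}$ gives $\bip{\bar u(t),v_k(t)} \leq 0$; on $\cA_0$, $v_k = 0$), and weak closedness of $\cT(\bar u)$, valid since $\bar u \in \Le^\infty(0,\sT;\Le^2(\Omega))$ by Corollary~\ref{cor:sparsity-in-time-optimal} (as $\beta_2 > 0$), transfers this to the weak limit. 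Dividing the contradiction inequality by $\rho_k$, using $F \in \C^1$ from Lemma~\ref{lem:f-c2}, the convexity bound $\sj(u_k) - \sj(\bar u) \geq \rho_k \sj'(\bar u;v_k)$, and weak lower semicontinuity of the convex functional $w \mapsto \sj'(\bar u;w)$, I deduce $F'(\bar u)v + \beta_1 \sj'(\bar u;v) \leq 0$. With~\eqref{eq:FONC-direc-deriv} this forces equality and hence $v \in C(\bar u)$.

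\textbf{Step 2: second-order expansion.} Lemma~\ref{lem:f-c2} provides the Taylor expansion of $F$. For $\sj$, I truncate as in Steps~3--4 of the proof of Theorem~\ref{thm:sonc}: for any $\alpha > 0$, Lemma~\ref{lem:taylor-expansion-norm}~\eqref{item:taylor-expansion-norm-integrated} is applicable on $\{\|\bar u\|_{\Le^2(\Omega)} \geq \alpha\}$ (valid since $u_k \to \bar u$ uniformly) while the convexity inequality~\eqref{item:taylor-expansion-norm-ineq} governs the complement, producing
\begin{equation*}
\sj(u_k) - \sj(\bar u) \geq \sj'(\bar u;u_k - \bar u) + \tfrac12 \int_{\{\|\bar u\|_{\Le^2(\Omega)} \geq \alpha\}} \Upsilon_2''(\bar u)(u_k - \bar u)^2 \dd t + o(\rho_k^2),
\end{equation*}
where the cubic remainder is controlled by $\|u_k - \bar u\|_{\Le^3(\Le^2)}^3 \leq \|u_k - \bar u\|_{\Le^\infty(\Le^2)} \rho_k^2 = o(\rho_k^2)$. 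The first-order terms collapse, via $\sj'(\bar u;\cdot) \geq \bip{\bar\lambda, \cdot}$ and the Lagrange equation~\eqref{eq:FONC-multiplier}, to $-\int_{\cA_+} \bar\mu \bip{\bar u, u_k - \bar u}/\|\bar u\|_{\Le^2(\Omega)}\dd t$; feasibility $\|u_k\|_{\Le^2(\Omega)} \leq \omega = \|\bar u\|_{\Le^2(\Omega)}$ on $\cA_+$ forces $\bip{\bar u, u_k - \bar u} \leq -\tfrac12 \|u_k - \bar u\|_{\Le^2(\Omega)}^2$, so this lower bound is itself $\geq \tfrac12 \int_{\cA_+} \bar\mu \|\bar u\|_{\Le^2(\Omega)}^{-1} \|u_k - \bar u\|_{\Le^2(\Omega)}^2 \dd t \geq 0$.

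\textbf{Step 3: limit passage and conclusion.} Substituting into the contradiction ansatz, dividing by $\rho_k^2$ and sending $k \to \infty$, Lemma~\ref{lem:f-c2} (uniform convergence of the base point plus weak lower semicontinuity of the quadratic form) gives $\liminf_k F''(\bar u + \vartheta_k(u_k - \bar u))v_k^2 \geq F''(\bar u)v^2$, and Fatou handles the truncated $\sj''$-integral and the multiplier integral. Letting then $\alpha \searrow 0$ by monotone convergence yields
\begin{equation*}
0 \geq F''(\bar u)v^2 + \beta_1 \sj''(\bar u;v^2) + \int_{\cA_+} \bar\mu \|\bar u\|_{\Le^2(\Omega)}^{-1} \|v\|_{\Le^2(\Omega)}^2 \dd t \geq \ell_r''(\bar u;v^2).
\end{equation*}
If $v \neq 0$, this contradicts~\eqref{eq:ssoc-condition}. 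If $v = 0$, the coercivity $\beta_2 > 0$ is what closes the argument: the compact embedding $\cY_+ \embedsc \C([0,\sT];\Le^2(\Omega))$ (cf.\ Lemma~\ref{lem:f-c2}) forces $z_{v_k} = \cS'(\bar u)v_k \to 0$ strongly, so all summands in $F''(\bar u)v_k^2$ except $\beta_2 \|v_k\|_{\Le^2(0,\sT;\Le^2(\Omega))}^2 = \beta_2$ vanish in the limit; with the $\sj''$ and multiplier contributions being nonnegative, the inequality degenerates to $0 \geq \beta_2$, contradicting $\beta_2 > 0$. \textbf{The main obstacle} is the potential infiniteness of $\sj''(\bar u;v^2)$ under only weak convergence $v_k \wto v$; the truncate-then-take-limit strategy of Steps~2--3 (parallel to the proof of Theorem~\ref{thm:sonc}) and the coercivity provided by $\beta_2 > 0$ are the twin mechanisms that resolve it.
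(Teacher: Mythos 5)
Your proposal is correct and follows essentially the same route as the paper: a contradiction/blow-up argument with $v_k=\rho_k^{-1}(u_k-\bar u)\wto v$, establishing $v\in C(\bar u)$, truncated Taylor expansions of $\sj$ away from $\{\|\bar u\|_{\Le^2(\Omega)}\ \text{small}\}$ combined with the convexity inequality elsewhere, weak lower semicontinuity and monotone convergence to get $\ell_r''(\bar u;v^2)\le 0$ hence $v=0$, and finally the weak continuity of the non-coercive part of $F''$ to contradict $\beta_2>0$. The only (harmless) deviation is that you carry the Lagrange multiplier term explicitly through the expansion, whereas the paper simply discards the nonnegative first-order term $F'(\bar u)v_k+\beta_1\sj'(\bar u;v_k)\ge 0$ via the first-order condition; both yield the same conclusion.
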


\begin{proof}
  Suppose not. Then there are positive nonincreasing sequences
  $\alpha_k,\eta_k \searrow 0$ and feasible controls $(u_k) \subset \uad$ such that
  \begin{equation}\label{eq:sosc-contra}
    \bigl\|u_k - \bar u\bigr\|_{\Le^\infty(0,\sT;\Le^2(\Omega))} \leq \alpha_k \quad
    \text{and} \quad \ell_r(u_k) < \ell_r(\bar u) + \frac{\eta_k}{2}\bigl\|u_k - \bar
    u\bigr\|_{\Le^2(0,\sT;\Le^2(\Omega))}^2. 
  \end{equation}
  We set $\rho_k \defn \|u_k - \bar u\|_{\Le^2(0,\sT;\Le^2(\Omega))}>0$ and
  $v_k \defn \rho_k^{-1}\bigl(u_k-\bar u\bigr)$. Then clearly $\rho_k \searrow
  0$. Since $v_k$ is normalized in $\Le^2(0,\sT;\Le^2(\Omega))$, we have
  $v_k \wto v \in \Le^2(0,\sT;\Le^2(\Omega))$, possibly after going over to a
  subsequence. The rest of the proof will consist of showing that $v = 0$ which then
  will lead to a contradiction with $\beta_2 > 0$. In order to show $v = 0$, we
  establish that $v \in C(\bar u)$ and $\ell_r''(\bar u;v^2) \leq 0$ and then
  conclude from~\eqref{eq:ssoc-condition}.

  \textbf{Step 1}: $v \in C(\bar u)$. We first show that $v \in \cT(\bar u)$. Since
  $u_k \in \uad$ for each $k$, we have
  $\|u_k(t)\|_{\Le^2(\Omega)}^2 \leq \omega(t)^2 = \|\bar u(t)\|_{\Le^2(\Omega)}^2$
  for almost all $t \in \cA$. Hence $\ip{u_k(t)-\bar u(t),\bar u(t)} \leq 0$ for
  $t \in \cA_+$ and $u_k(t) = \bar u(t) = 0$ for $t \in \cA_0$. This implies
  $v_k \in \cT(\bar u)$ and thus $v \in \cT(\bar u)$, since $\cT(\bar u)$ is weakly
  closed in $\Le^2(0,\sT;\Le^2(\Omega))$.

It remains to show that $F'(\bar u)v + \beta_1\sj'(\bar u;v) = 0$. We argue as
in~\cite[Proof~of Thm.~5.12]{CHW17}. The function $\sj$ is Lipschitz continuous and
convex, thus
\begin{equation*}
  \sj'(\bar u;v) \leq \liminf_{k \to \infty}\frac{\sj(u_k) - \sj(\bar u)}{\rho_k}.
\end{equation*}
This inequality and~\eqref{eq:sosc-contra} then show that
\begin{align*}
  F'(\bar u)v + \beta_1\sj'(\bar u;v) & \leq \liminf_{k \to \infty}\frac{\ell_r(u_k) -
    \ell_r(\bar u)}{\rho_k} \\ & \leq \liminf_{k\to\infty}\frac{\eta_k}{2\rho_k}
  \bigl\|u_k - \bar 
  u\bigr\|_{\Le^2(0,\sT;\Le^2(\Omega))}^2 = \liminf_{k\to\infty}
  \frac{\rho_k\eta_k}{2} = 0.
\end{align*}
From the reverse inequality from the first order necessary optimality
condition~\eqref{eq:FONC-direc-deriv} we infer $v \in C(\bar u)$.

\textbf{Step 2}: $\ell_r''(\bar u;v^2) \leq 0$: We again
define approximations to $v$ and $v_k$ to cope with the possible unboundedness of
$\sj''(\bar u;v^2)$. Let $\kappa_\ell > 0$ be a nonincreasing sequence converging to
zero. Set
\begin{equation*}
  v_{k,\ell}(t) \defn
  \begin{cases}
    0 & \text{if}~t \in \bsetof{0 < \|\bar u\|_{\Le^2(\Omega)} < \kappa_\ell}, \\
    v_k(t) & \text{elsewhere}
  \end{cases}
\end{equation*}
and $v_\ell$ analogously. Then $v_{k,\ell} \wto v_\ell$ for $\ell$ fixed and
$k \to \infty$, and $v_\ell \to v$ if $\ell \to \infty$, both in
$\Le^2(0,\sT;\Le^2(\Omega))$. We find for almost all $t \in (0,\sT)$
\begin{equation*}
  \bigl\|\bar u(t) + \rho_k v_{k,\ell}(t)\bigr\|_{\Le^2(\Omega)} \geq \kappa_\ell -
  \alpha_k \geq \frac{\kappa_\ell}2,
\end{equation*}
for $\ell$ fixed and $k$ large enough, since from~\eqref{eq:sosc-contra}
\begin{equation*}
  \rho_k \|v_{k,\ell}(t)\|_{\Le^2(\Omega)} \leq \rho_k \|v_k(t)\|_{\Le^2(\Omega)} \leq
  \alpha_k.
\end{equation*}
Hence we have integrated Taylor expansions of
$\|\bar u + \rho_k v_{k,\ell}\|_{\Le^2(\Omega)}$ from
Lemma~\ref{lem:taylor-expansion-norm}~\eqref{item:taylor-expansion-norm-integrated} at hand. Further, from
Lemma~\ref{lem:taylor-expansion-norm}~\eqref{item:taylor-expansion-norm-ineq},
\begin{multline}
  \int_{\setof{0 < \|\bar u\|_{\Le^2(\Omega)} < \kappa_\ell}} \Bigl(\bigl\|\bar u(t)
  + \rho_k v_{k}(t)\bigr\|_{\Le^2(\Omega)} - \bigl\|\bar u(t)\bigr\|_{\Le^2(\Omega)}
  \Bigr)\dd t \\ \geq \rho_k \int_{\setof{0 < \|\bar u\|_{\Le^2(\Omega)} < \kappa_\ell}}
  \frac{\bip{\bar u(t),v_k(t)}}{\|\bar u(t)\|_{\Le^2(\Omega)}} \dd t.\label{eq:taylor-ineq-ssoc}
\end{multline}
Using~\eqref{eq:sosc-contra} and the definition of $v_{k,\ell}$, we expand
\begin{align*}
  \frac{\eta_k\rho_k^2}{2} & > \ell_r(u_k) - \ell_r(\bar u) \\ &= F(u_k) - F(\bar
  u)  +
  \beta_1\bigl(\sj(u_k) - \sj(\bar u)\bigr) \\& = F(u_k) - F(\bar u) +
  \beta_1\rho_k\int_{\setof{\|\bar u\|_{\Le^2(\Omega)}=0}} \|v_k(t)\|_{\Le^2(\Omega)}
  \dd t \\ & \quad + \beta_1  \int_{\setof{0 < \|\bar u\|_{\Le^2(\Omega)} <
      \kappa_\ell}} \Bigl(\bigl\|\bar 
  u(t) + \rho_k v_{k}(t)\bigr\|_{\Le^2(\Omega)} - \bigl\|\bar
  u(t)\bigr\|_{\Le^2(\Omega)} \Bigr)\dd t \\ & \quad + \beta_1 \Bigl(\sj(\bar u +
  \rho_k v_{k,\ell}) - \int_{\setof{\|\bar u\|_{\Le^2(\Omega)} \geq
      \kappa_\ell}} \|\bar u(t)\|_{\Le^2(\Omega)} \dd t\Bigr).
\end{align*}
We insert Taylor expansions for $F$ and $\sj(\bar u + \rho_k v_{k,\ell})$
(cf.~Lemma~\ref{lem:taylor-expansion-norm}~\eqref{item:taylor-expansion-norm-integrated})
as well as~\eqref{eq:taylor-ineq-ssoc} to obtain, with a function
$\vartheta_k \colon [0,\sT] \to [0,1]$,
\begin{align*}
  \frac{\eta_k\rho_k}{2} & > \rho_k F'(\bar u)v_k
  + \frac{\rho_k^2}2 F''\bigl(\bar
  u+\vartheta_k\rho_kv_k\bigr)v_k^2 + 
  \beta_1\rho_k\int_{\setof{\|\bar u\|_{\Le^2(\Omega)}=0}} \|v_k(t)\|_{\Le^2(\Omega)} 
  \dd t  \\ & \quad + \beta_1\rho_k\left(\int_{\setof{0 < \|\bar
        u\|_{\Le^2(\Omega)} < \kappa_\ell}} \frac{\bip{\bar u(t),v_k(t)}}{\|\bar
      u(t)\|_{\Le^2(\Omega)}} \dd t  + \int_{\setof{\|\bar
        u\|_{\Le^2(\Omega)} \geq \kappa_\ell}} \frac{\bip{\bar u(t),v_k(t)}}{\|\bar
      u(t)\|_{\Le^2(\Omega)}} \dd t\right) \\ & \quad + \frac{\beta_1\rho_k^2}2
  \int_{\setof{\|\bar 
      u\|_{\Le^2(\Omega)} \geq \kappa_\ell}}  \|\bar
  u(t)\|_{\Le^2(\Omega)}^{-1} \left[ 
    \bigl\|v_{k,\ell}(t)\bigr\|_{\Le^2(\Omega)}^2 - \left(\frac{\bip{\bar u(t),
          v_{k,\ell}(t)}}{\| 
        \bar u(t)\|_{\Le^2(\Omega)}}\right)^2\right] \dd t \\ & \quad +
  \cO\bigl(\rho_k^3\|v_{k}\|_{\Le^3(0,\sT;\Le^2(\Omega))}^3\bigr).
\end{align*}
Note that
$\rho_k\|v_{k}\|_{\Le^3(0,\sT;\Le^2(\Omega))}^3 \leq
\rho_k\|v_k\|_{\Le^\infty(0,\sT;\Le^2(\Omega))} \in \cO(\alpha_k)$
by~\eqref{eq:sosc-contra}. Inserting this and rearranging, we arrive at
\begin{multline*}
  \frac{\eta_k\rho_k^2}{2} > \rho_k\bigl(F'(\bar u)v_k + \beta_1\sj'(\bar
  u;v_k)\bigr) \\ +\frac{\rho_k^2}{2} \left(F''\bigl(\bar u + \vartheta_k\rho_kv_k\bigr)v_k^2 +
    \beta_1\sj''(\bar u;v_{k,\ell}^2)\right) + 
  \cO\left(\alpha_k\rho_k^2\right).
\end{multline*}
Since $v_k \in \cT(\bar u)$, we have $F'(\bar u)v_k + \beta_1\sj'(\bar u;v_k) \geq 0$
by first order optimality~\eqref{eq:FONC-direc-deriv}. It is thus practical to divide
by $\rho_k^2/2$ and insert a zero to obtain
\begin{equation}
  \label{eq:sosc-nearly}
  F''(\bar u)v_k^2 + \beta_1\sj''(\bar u;v_{k,\ell}^2) \leq
  \bigl|F''(\bar u)v_k^2 - F''\bigl(\bar u +\vartheta_k\rho_kv_k\bigr)v_k^2\bigr| +
  \cO(\alpha_k) + \eta_k.
\end{equation}
We had $\rho_kv_k \to 0$ in $\Le^\infty(0,\sT;\Le^2(\Omega))$ as $k \to \infty$ by
construction. Hence, since $F$ is twice continuously differentiable by
Lemma~\ref{lem:f-c2} and $v_k$ is normalized in $\Le^2(0,\sT;\Le^2(\Omega))$,
\begin{equation*}
  \bigl|F''(\bar u)v_k^2 - F''\bigl(\bar u
  +\vartheta_k\rho_kv_k\bigr)v_k^2\bigr| \to 0 \quad \text{as}~k\to\infty.
\end{equation*}
It follows from Lemma~\ref{lem:f-c2} and
Lemma~\ref{lem:taylor-expansion-norm}~\eqref{item:taylor-expansion-norm-quadratic}
that the second derivatives on the left-hand side in~\eqref{eq:sosc-nearly} are both
weakly lower semicontinuous with respect to their directions. We thus obtain
from~\eqref{eq:sosc-nearly}
\begin{align*}
  F''(\bar u)v^2 + \beta_1\sj''(\bar u;v_\ell^2) 
  & \leq
  \liminf_{k\to\infty} \bigl( F''(\bar u)v_k^2 + 
  \beta_1\sj''(\bar u;v_{k,\ell}^2)\bigr) = 0.
\end{align*}
Finally, a monotone convergence argument as in the proof of Theorem~\ref{thm:sonc}
shows that 
\begin{equation*}
  \ell_r''(\bar u;v^2) =  F''(\bar u)v^2 + \beta_1 \lim_{\ell\to\infty}\sj''(\bar u;v_\ell^2)
  \leq 0
\end{equation*}
so that $v = 0$ by~\eqref{eq:ssoc-condition}.

\textbf{Step 3}: Contradiction to $\beta_2 > 0$. From
$v_k \wto v = 0$, it follows that $z_{v_k} \wto z_v = 0$ in
$\cY$ due to Theorem~\ref{thm:control-to-state-diff}. Hence,
setting $G(u) \defn F(u) - \frac{\beta_2}2
\|u\|_{\Le^2(0,\sT;\Le^2(\Omega))}^2$ and using
Lemma~\ref{lem:f-c2} for $\beta_2 = 0$, we find
\begin{equation*}
  0 = G''(\bar u)z_v^2 \leq \liminf_{k\to\infty} G''(\bar u)z_{v_k}^2.
\end{equation*}
Since $v_k$ was normalized in $\Le^2(0,\sT;\Le^2(\Omega))$ and
$\sj''(\bar u;v_{k,\ell}^2) \geq 0$ for every $\ell$, we thus
have
\begin{align*}
  \beta_2 & \leq \liminf_{k\to\infty}G''(\bar
  u)z_{v_k}^2 + \beta_2  \\ & = \liminf_{k\to\infty}G''(\bar
  u)z_{v_k}^2 + \beta_2\liminf_{k\to\infty} \|v_k\|_{\Le^2(0,\sT;\Le^2(\Omega))} \\ & \leq \liminf_{k\to\infty}
  F''(\bar u)v_k^2\\ & \leq \liminf_{k\to\infty}
  F''(\bar u)v_k^2 + \beta_1\liminf_{k\to\infty} \sj''(\bar
  u;v_{k,\ell}^2)  \\ & \leq \liminf_{k\to\infty} \left(
  F''(\bar u)v_k^2 + \beta_1 \sj''(\bar
  u;v_{k,\ell}^2)\right) = 0,
\end{align*}
where again~\eqref{eq:sosc-nearly} was used. 
This is the final contradiction and completes the proof.
\end{proof}

\appendix
\def\appendixname{Appendix}
\section{Auxiliary results}
\renewcommand*{\thesection}{\Alph{section}}

At several points we need the following growth lemma.

\begin{lemma}[{\cite[Ch.~IV, Lem.~2.2]{S08}}]
  \label{lem:useful-estimate} Let $C_0 > 0$ and suppose that $f \in \C([a,b))$
  satisfies $f(s) \geq 0$ for all $s \in [a,b)$ and $f(a) = 0$ as well as
  \begin{equation*}
    f(s) \leq C_0 + \eps f(s)^\sigma  \quad \text{for all}~s\in [a,b)
  \end{equation*}
  for some $\sigma > 0$. If $\eps < 2^{-\sigma}C_0^{1-\sigma}$, then it follows that
  \begin{equation*}
    f(s) \leq 2C_0 \quad \text{for all}~s\in [a,b).
  \end{equation*}
\end{lemma}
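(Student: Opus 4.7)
The plan is to use a standard continuity/connectedness (bootstrap) argument on the set $S \defn \{s \in [a,b) \colon f(s) \leq 2C_0\}$. The strategy is to show that $S$ is nonempty, relatively closed, and relatively open in $[a,b)$; since $[a,b)$ is connected, this forces $S = [a,b)$, which is exactly the claim.

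First I would note $a \in S$ because $f(a) = 0 \leq 2C_0$, so $S \neq \emptyset$. Relative closedness of $S$ in $[a,b)$ is immediate from continuity of $f$, since $S = f^{-1}([0,2C_0])$ restricted to $[a,b)$. The key step is openness. For $s_0 \in S$, the hypothesis gives
\begin{equation*}
  f(s_0) \leq C_0 + \eps f(s_0)^\sigma \leq C_0 + \eps (2C_0)^\sigma = C_0\bigl(1 + \eps \, 2^\sigma C_0^{\sigma-1}\bigr),
\end{equation*}
and the assumption $\eps < 2^{-\sigma} C_0^{1-\sigma}$ is exactly the statement that $\eps \, 2^\sigma C_0^{\sigma-1} < 1$, whence $f(s_0) < 2C_0$ \emph{strictly}. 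By continuity of $f$ at $s_0$, there is a relative neighborhood $U \subseteq [a,b)$ of $s_0$ on which $f < 2C_0$, hence $U \subseteq S$, establishing openness.

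Combining these three properties with connectedness of $[a,b)$ yields $S = [a,b)$, i.e., $f(s) \leq 2C_0$ for all $s \in [a,b)$. The only subtle point—and the step where the precise form of the smallness assumption on $\eps$ is used—is obtaining the strict inequality $f(s_0) < 2C_0$ from $f(s_0) \leq 2C_0$, since without strictness one cannot propagate the bound to an open neighborhood; the rest of the argument is routine.
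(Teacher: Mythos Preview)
Your argument is correct and is the standard bootstrap proof for this type of growth lemma. Note, however, that the paper does not actually give its own proof of this statement: Lemma~\ref{lem:useful-estimate} is simply quoted from \cite[Ch.~IV, Lem.~2.2]{S08} without proof, so there is nothing to compare against beyond observing that your continuity/connectedness argument is precisely the one found in that reference.
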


The remaining assertions are about the first and second derivatives of (powers
of) norms on Banach spaces. We need the results for Lebesgue spaces
for which the following result is a basic one.

\begin{proposition}[{\cite[Thms.~3.3\&3.9]{LS74}}]
  Let $\Upsilon \subset \R^d$ for some $d \in \N$ and let $X$ be a Banach space. Then
  the norm on $\Le^p(\Upsilon;X)$ for $2 < p < \infty$ is twice continuously
  differentiable away from $0$ if and only if the norm on $X$ is twice continuously
  differentiable away from $0$ and the second derivative is uniformly bounded on the
  unit sphere in $X$. The norm on $\Le^2(\Upsilon;X)$ is twice continuously
  differentiable away from $0$ if and only if $X$ is a Hilbert
  space.\label{prop:norm-differentiability}
\end{proposition}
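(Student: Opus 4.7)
The approach is to reduce smoothness of $\|\cdot\|_{\Le^p(\Upsilon;X)}$ to that of $N(f) \defn \int_\Upsilon \|f(t)\|_X^p\,\dd t$, since $\|\cdot\|_{\Le^p(\Upsilon;X)} = N^{1/p}$ and $s\mapsto s^{1/p}$ is $\C^\infty$ away from $0$. This shifts the work to differentiation of $N$ under the integral and identification of the pointwise structure of $N''$ in terms of the scalar function $\phi_p(x) \defn \|x\|_X^p$. Throughout, write $\phi_1(x) \defn \|x\|_X$ and $n_X \defn \phi_1'$.

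For the forward direction in the case $2 < p < \infty$, the scalar chain rule gives
\begin{equation*}
  \phi_p''(x)(h,h) = p(p-1)\|x\|_X^{p-2}\bigl\langle n_X(x), h\bigr\rangle^2 + p\|x\|_X^{p-1}\phi_1''(x)(h,h).
\end{equation*}
Under the uniform bound $\sup_{\|x\|_X=1}\|\phi_1''(x)\| < \infty$, homogeneity of $\phi_1''$ (degree $-1$) yields $\|\phi_p''(x)\| \lesssim \|x\|_X^{p-2}$. A H\"older inequality in $\Le^p(\Upsilon;X)$ with conjugate exponents $(\tfrac{p}{p-2},p,p)$ then shows that
\begin{equation*}
  N''(f)(h_1,h_2) \defn \int_\Upsilon \phi_p''\bigl(f(t)\bigr)\bigl(h_1(t),h_2(t)\bigr)\,\dd t
\end{equation*}
defines a continuous symmetric bilinear form on $\Le^p(\Upsilon;X)$ depending continuously on $f$. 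Dominated convergence upgrades this to a genuine Fr\'echet derivative, and the chain rule propagates $\C^2$-regularity from $N$ to $N^{1/p}$ away from $\{N=0\}$.

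For the reverse direction, test along the subspace $\chi_E \otimes X \embeds \Le^p(\Upsilon;X)$ for any measurable $E$ with $|E| > 0$: since $\|\chi_E x\|_{\Le^p(\Upsilon;X)} = |E|^{1/p}\|x\|_X$, the $\C^2$-regularity on $\Le^p(\Upsilon;X)$ descends to $\C^2$-regularity of $\|\cdot\|_X$ away from $0$. To force the uniform bound on $\phi_1''$ over the unit sphere of $X$, evaluate $N''$ at normalized step-function reference points $|E_n|^{-1/p}\chi_{E_n} x_n$ with varying $x_n$ on the unit sphere and directions $\chi_{E_n} h_n$: the formula for $\phi_p''$ then collapses to a constant multiple of $\phi_1''(x_n)(h_n,h_n)$ modulo a first-derivative remainder, so continuity of the Fr\'echet second derivative of the $\Le^p$-norm across the unit sphere of $\Le^p(\Upsilon;X)$ transfers to a uniform bound on $\phi_1''$ over $\{\|x\|_X = 1\}$.

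For the $p=2$ case, one direction is immediate: a Hilbert $X$ makes $\Le^2(\Upsilon;X)$ Hilbert via $(f,g) \mapsto \int_\Upsilon (f(t),g(t))_X\,\dd t$, and its norm is analytic away from $0$. For the converse, the crucial observation is that $\C^2$-regularity of $\|\cdot\|_{\Le^2(\Upsilon;X)}$ combined with degree-$2$ positive homogeneity of its square forces $\|\cdot\|_{\Le^2(\Upsilon;X)}^2$ to coincide with its second-order Taylor polynomial at any nonzero point, hence to be induced by a continuous symmetric positive quadratic form---so $\Le^2(\Upsilon;X)$ is Hilbert, and therefore so is $X$ (pass to the subspace $\chi_E \otimes X$, where the parallelogram identity descends). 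The main obstacle I anticipate is the reverse direction in the $p>2$ case: extracting the uniform sphere bound on $\phi_1''$ from mere continuity of the $\Le^p$-level second derivative requires a delicate scaling argument in the support sets $E_n$, and rigorously promoting the a.e.-pointwise formulas for $N'$ and $N''$ to genuine Fr\'echet derivatives demands careful Hölder-based growth control combined with dominated convergence.
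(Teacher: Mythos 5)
First, note that the paper does not prove this proposition at all---it imports it verbatim from Leonard--Sundaresan---so your proposal has to stand on its own. Your reduction to $N(f) \defn \int_\Upsilon \|f(t)\|_X^p \dd t$ and the forward direction for $2<p<\infty$ are sound: the chain-rule formula for $\phi_p''$, the bound $\|\phi_p''(x)\| \lesssim \|x\|_X^{p-2}$ from degree-$(-1)$ homogeneity of $\phi_1''$, and the H\"older/dominated-convergence upgrade to a continuous second Fr\'echet derivative are the right ingredients. The two converse directions, however, each contain a genuine gap. For $p>2$: continuity of the second derivative of the $\Le^p(\Upsilon;X)$-norm on its unit sphere does \emph{not} yield a uniform bound there---the sphere is not compact---so evaluating at a sequence of reference points $|E_n|^{-1/p}\chi_{E_n}x_n$ only gives $\|\phi_1''(x_n)\| \lesssim K_n$ with $K_n$ the uncontrolled operator norm of the second derivative at the $n$-th reference point. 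The step you flag as ``delicate'' is precisely the missing idea: one must aggregate the $x_n$ into a \emph{single} nonzero element $f = \sum_n c_n \chi_{E_n} x_n$ with pairwise disjoint $E_n$; for directions $h = \chi_{E_n}h_n$ the map $s \mapsto N(f+sh)$ is computed exactly on the block $E_n$, so the one bounded bilinear form $N''(f)$ must dominate $|E_n|\,\phi_p''(c_nx_n)(h_n,h_n)$, and choosing $a_n \defn c_n|E_n|^{1/p}$ with $\sum_n a_n^p < \infty$ but $a_n^{p-2}\|\phi_1''(x_n)\| \to \infty$ (possible whenever $\sup_n\|\phi_1''(x_n)\| = \infty$, after passing to a rapidly growing subsequence) contradicts boundedness of $N''(f)$.

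Second, the ``crucial observation'' in your $p=2$ converse is false as stated: a positively $2$-homogeneous function that is twice continuously differentiable away from $0$ need not coincide with its second-order Taylor polynomial, since homogeneity only makes the second derivative constant along rays, not globally constant. The function $x \mapsto (x_1^4+x_2^4)^{1/2}$ on $\R^2$ is smooth away from $0$ and $2$-homogeneous but is not a quadratic form; were your principle true, every Banach space with a norm twice continuously differentiable away from $0$ would be Hilbert, contradicting the $p>2$ half of the very proposition you are proving. What actually forces the Hilbert structure is again a disjoint-support mechanism special to $p=2$: fix disjoint $E_1,E_2$ of positive finite measure and $x_0 \neq 0$; then $\|\chi_{E_1}x_0 + \chi_{E_2}y\|_{\Le^2(\Upsilon;X)} = \bigl(|E_1|\|x_0\|_X^2 + |E_2|\|y\|_X^2\bigr)^{1/2}$, and twice continuous differentiability of the $\Le^2$-norm at the nonzero point $\chi_{E_1}x_0$ forces $y \mapsto \|y\|_X^2$ to be twice differentiable \emph{at} $y=0$. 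Only differentiability at the origin, via the scaling $t^{-2}\|ty\|_X^2 = \|y\|_X^2$, kills the Taylor remainder and yields $\|y\|_X^2 = Q(y,y)$ for a bounded symmetric form $Q$, whence $X$ is Hilbert.
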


The requirements in Proposition~\ref{prop:norm-differentiability} are clearly
satisfied in the case $X = \R$. Setting $\Upsilon_p(f) \defn \|f\|_{\Le^p(\Omega)}$ for
$2 \leq p < \infty$, we have the following derivatives in
$f \in \Le^p(\Omega)\setminus \{0\}$:
\begin{align*}
  \Upsilon_p'(f)h & = \|f\|^{1-p}_{\Le^p(\Omega)} \bip{|f|^{p-2}f,h} ,\\
   \Upsilon_p''(f)h_1h_2 & = (p-1) \left(
   \|f\|_{\Le^p(\Omega)}^{1-p}\bip{|f|^{p-2}h_1,h_2} -
   \|f\|_{\Le^p(\Omega)}^{-1} \prod_{i=1}^2 \bigl(\Upsilon_p'(f)h_i\bigr)\right)
\end{align*}
The H\"older inequality shows that
$|\Upsilon_p''(f)h_1h_2|\lesssim
\|f\|_{\Le^p(\Omega)}^{-1}\|h_1\|_{\Le^p(\Omega)}\|h_2\|_{\Le^p(\Omega)}$,
so $\Upsilon_p''$ is bounded on the unit sphere in
$\Le^p(\Omega)$. This allows to invoke
Proposition~\ref{prop:norm-differentiability} again for
$\Phi_{p,q}(f) \defn \|f\|_{\Le^p(0,\sT;\Le^q(\Omega))}$ with
$2 < p,q < \infty$ or $p=q=2$, and a straight forward calculation with
the chain rule gives the derivatives in $f \neq 0$ as follows:
\begin{align*}
  \Phi_{p,q}'(f)h & = \|f\|^{1-p}_{\Le^p(0,\sT;\Le^q(\Omega))} \int_0^\sT
  \|f(t)\|_{\Le^q(\Omega)}^{p-1} \Upsilon_q'(f(t))h(t) \dd t ,  \\ 
  \Phi_{p,q}''(f)h_1h_2 & = (p-1)\|f\|^{1-p}_{\Le^p(0,\sT;\Le^q(\Omega))} \int_0^\sT
   \|f(t)\|_{\Le^q(\Omega)}^{p-2}
   \prod_{i=1}^2\Bigl(\Upsilon_q'(f(t))h_i(t)\Bigr) \dd t  \\ & \quad -
  (p-1)\|f\|^{1-2p}_{\Le^p(0,\sT;\Le^q(\Omega))} \prod_{i=1}^2 \int_0^\sT
  \|f(t)\|_{\Le^q(\Omega)}^{p-1} \Upsilon_q'(f(t))h_i(t) \dd t \\ & \qquad +
  \|f\|^{1-p}_{\Le^p(0,\sT;\Le^q(\Omega))} \int_0^\sT \|f(t)\|_{\Le^q(\Omega)}^{p-1}
  \Upsilon_q''(f(t))h_1(t)h_2(t) \dd t.
\end{align*}

We next consider powers of the norms $\Phi_{p,q}$. The general result
is as follows. It shows that with a sufficiently large power one can
overcome the nondifferentiability in $0$ of $\Phi_{p,q}$.

\begin{lemma}
  \label{lem:lipschitz-diff}
  Let $X$ be a Banach space and $\nu > 1$. Consider 
  $f \colon X \to \R$ and set $g(x) \defn |f(x)|^{\nu-1}f(x)$.
  \begin{enumerate}[leftmargin=*]
  \item Suppose that $f$ is locally Lipschitz continuous in $0$ and continuously
    differentiable on $X \setminus \{0\}$. Then $g$ is continuously differentiable
    on $X$ with $g'(0) = 0$.
  \item Suppose in addition that $\nu > 2$ and that $f$ is twice
    continuously differentiable on $X \setminus \{0\}$ with
    $\|f''(x)\|_{\cL(X\times X;\R)} \lesssim \|x\|_X^{-1}$ as $x
    \to 0$. Then
    $g$ is twice continuously differentiable on $X$ with $g''(0)
    = 0$.
  \end{enumerate}
\end{lemma}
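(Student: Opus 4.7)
The plan is to factor $g$ through the scalar power function $\phi_\nu(t) \defn |t|^{\nu-1}t$. A direct computation shows that $\phi_\nu \in \C^1(\R)$ with $\phi_\nu'(t) = \nu|t|^{\nu-1}$ whenever $\nu > 1$, and $\phi_\nu \in \C^2(\R)$ with $\phi_\nu''(0) = 0$ whenever $\nu > 2$. Since $g = \phi_\nu \circ f$, the chain rule immediately yields the claimed regularity on $X \setminus \{0\}$, and all subsequent work concerns the isolated bad point $x = 0$, where differentiability must be verified by hand using the Lipschitz bound on $f$ to control the blow-up of $f'$ (and $f''$) as $x \to 0$.

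For Part~(i), the key preliminary observation is that local Lipschitz continuity of $f$ near $0$ with some constant $L$, combined with $f \in \C^1(X\setminus\{0\})$, forces $\|f'(x)\|_{X^*} \leq L$ on a punctured neighborhood of $0$; this follows from the standard mean-value argument applied along segments avoiding $0$. Using $|f(x)| \leq L\|x\|_X$, I then infer $|g(x)| \leq L^\nu \|x\|_X^\nu = o(\|x\|_X)$ because $\nu > 1$, so that $g$ is Fr\'echet differentiable at $0$ with $g'(0) = 0$. On the complement, the chain rule produces $g'(x) = \nu|f(x)|^{\nu-1}f'(x)$, and combining the two bounds yields $\|g'(x)\|_{X^*} \lesssim \|x\|_X^{\nu-1} \to 0$, which establishes continuity of $g'$ at $0$.

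Part~(ii) proceeds in the same spirit, but now the second-derivative bound on $f$ becomes essential. From the estimate $\|g'(x)\|_{X^*} \lesssim \|x\|_X^{\nu-1} = o(\|x\|_X)$, which is valid since $\nu > 2$, one concludes that $g'$ is Fr\'echet differentiable at $0$ with $g''(0) = 0$. On $X \setminus \{0\}$, differentiating the product $\nu|f(x)|^{\nu-1}f'(x)$ produces two contributions: one of the form $\nu(\nu-1)|f(x)|^{\nu-3}f(x)\,f'(x)\otimes f'(x)$, and one of the form $\nu|f(x)|^{\nu-1}f''(x)$. The first is controlled uniformly by $L^\nu\|x\|_X^{\nu-2}$ and vanishes as $x \to 0$ because $\nu > 2$. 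The main obstacle is the second term: this is precisely where the hypothesis $\|f''(x)\|_{\cL(X \times X;\R)} \lesssim \|x\|_X^{-1}$ enters; it gives an operator-norm bound of the order $\|x\|_X^{\nu-1}\cdot\|x\|_X^{-1} = \|x\|_X^{\nu-2} \to 0$. Putting both estimates together yields continuity of $g''$ at $0$. The balance is sharp in the sense that the allowable growth $\|x\|_X^{-1}$ of $f''$ is exactly compensated by the additional power $|f(x)|^{\nu-1}$ contributed by the factorization through $\phi_\nu$, which is why the threshold $\nu > 2$ is critical for this part.
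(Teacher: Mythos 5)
Your proof is correct and follows essentially the same route as the paper: chain rule away from the origin, and at $x=0$ the direct estimates $|g(x)|\le L^{\nu}\|x\|_X^{\nu}=o(\|x\|_X)$ and $\|g'(x)\|_{X'}\lesssim\|x\|_X^{\nu-1}=o(\|x\|_X)$ to get $g'(0)=0$ and $g''(0)=0$, followed by the $\|x\|_X^{\nu-2}$ bounds on both pieces of $g''$ for continuity. The only point worth flagging is that your bound $|f(x)|\le L\|x\|_X$ tacitly uses $f(0)=0$; the paper makes this normalization explicit (``without loss of generality $f(0)=0$''), and it is genuinely needed for the conclusions $g'(0)=g''(0)=0$.
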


\begin{proof} We can without loss of generality assume that
  $f(0) = 0$.
  \begin{enumerate}[wide, labelwidth=!, labelindent=0pt]
  \item It is clear that $g$ is continuously differentiable on
    $X \setminus \{0\}$ with $g'(x)h =
    \nu|f(x)|^{\nu-1}f'(x)h$. Moreover, let $R>0$ be such that
    $f$ is Lipschitz continuous on a ball around $0$ with radius
    $R$. We denote the Lipschitz constant by $L_R$. Then we have
    for all $h$ with $\|h\|_X \leq R$
    \begin{equation*}
      \bigl|g(h)-g(0) - 0\cdot h\bigr| = |f(h)|^\nu \leq L_R^\nu
      \|h\|_X^\nu \in o\bigl(\|h\|_X\bigr),
    \end{equation*}
    so $g$ is differentiable in $0$ with derivative $g'(0) =
    0$. Since $\|f'(x)\|_{X'} \leq 1$ for all
    $X\setminus \{0\}$, we further find
    \begin{equation*}
      |g'(x)h| \leq \nu L_R^{\nu-1}\|x\|_X^{\nu-1}\|h\|_X
    \end{equation*}
    for all $x$ with $\|x\|_X \leq R$. This shows that
    $\|g'(x)\|_{X'} \to 0$ as $x\to0$ in $X$, so $g'$ is
    continuous in $0$.
  \item Now $g$ is even twice continuously differentiable on
    $X \setminus \{0\}$ with
    \begin{equation*}
      g''(x)h_1h_2 =
      \nu(\nu-1)|f(x)|^{\nu-2}\bigl(f'(x)h_1\bigr)\bigl(f'(x)h_2\bigr)
      + \nu |f(x)|^{\nu-1}f''(x)h_1h_2.
    \end{equation*}
    As above, we obtain, for all $h_2 \in X$ and $h_1$ with
    $\bigl\|h_1\bigr\|_X \leq R$,
    \begin{equation*}
      \bigl|g'(h_1)h_2 - g'(0)h_2 - 0\cdot h_1h_2\bigr| \leq \nu
      L_R^{\nu-1}\bigl\|h_1\bigr\|_X^{\nu-1}\bigl\|h_2\bigr\|_X,
    \end{equation*}
    hence $\|g'(h_1) - g'(0) - 0\|_{X'} \in o\bigl(\|h_1\|_X\bigr)$ and
    $g'$ is  differentiable in $0$ with $g''(0) = 0$. Here we have used that $\nu >
    2$ now. The claim
    that $g''$ is continuous in $0$
    follows from the observation that
    \begin{equation*}
      \bigl|g''(x)h_1h_2\bigr|
      \lesssim \|x\|^{\nu-2}\bigl\|h_1\bigr\|_X\bigl\|h_2\bigr\|_X
    \end{equation*}
    for all $h_1,h_2 \in X \setminus \{0\}$ as $x\to 0$ by the
    assumption on $f''$, and again $\nu > 2$. \qedhere
  \end{enumerate}
\end{proof}

We set
$\Psi_{p,q}(y) \defn \frac1p \|y\|_{\Le^p(0,\sT;\Le^{q}(\Omega))}^p =
\frac1p \Phi_{p,q}(y)^p$ and want to show that it is twice
continuously differentiable for $p > 2$ using
Lemma~\ref{lem:lipschitz-diff} with $f = \Phi_{p,q}$ and $\nu =
p$. The Lipschitz condition in Lemma~\ref{lem:lipschitz-diff} is
clearly satisfied since $\Phi_{p,q}$ is a norm, and we had already
seen that Proposition~\ref{prop:norm-differentiability} implies the
differentiability assumptions. Moreover, repeated use of H\"older's
inequality in the foregoing formula for $\Phi''_{p,q}$ shows that
  \begin{equation*}
    \bigl|\Phi_{p,q}''(f)h_1h_2\bigr| \lesssim
    \|f\|_{\Le^p(0,\sT;\Le^{q}(\Omega))}^{-1}
    \|h_1\|_{\Le^p(0,\sT;\Le^{q}(\Omega))}\|h_2\|_{\Le^p(0,\sT;\Le^{q}(\Omega))}. 
  \end{equation*}
Hence we can indeed apply Lemma~\ref{lem:lipschitz-diff} with
$f = \Phi_{p,q}$ and $\nu = p$ to obtain the following:

\begin{corollary}
  \label{cor:psi-diff}
  For $p > 2,$ the function $\Psi_{p,q}$ is twice continuously
  differentiable as a mapping from $\Le^p(0,\sT;\Le^q(\Omega))$ to
  $\R$. Its derivatives are given by
\begin{multline*}
  \Psi_{p,q}'(y)h = \int_0^\sT \|y(t)\|_{\Le^{q}(\Omega)}^{p-1}
    \Upsilon_q'(y(t))h(t) \dd t \\ = \int_0^\sT \|y(t)\|_{\Le^{q}(\Omega)}^{p-q}
    \bip{|y(t)|^{q-2}y(t),h(t)} \dd t
\end{multline*}
and
\begin{multline*}
  \Psi_{p,q}''(y)h^2 = (p-q)\int_0^\sT \|y(t)\|_{\Le^q(\Omega)}^{p-2}
  \bigl(\Upsilon_q'(y(t))h(t)\bigr)^2 \dd t \\ + (q-1)\int_0^\sT
  \|y(t)\|_{\Le^q(\Omega)}^{p-q} \bip{|y(t)|^{q-2},h^2(t} \dd t.
\end{multline*}
\end{corollary}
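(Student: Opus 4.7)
The plan is to exploit that $\Psi_{p,q}(y) = \tfrac1p \Phi_{p,q}(y)^p = \tfrac1p |\Phi_{p,q}(y)|^{p-1}\Phi_{p,q}(y)$, since $\Phi_{p,q}(y)\ge 0$. Thus $\Psi_{p,q}$ fits exactly the template of Lemma~\ref{lem:lipschitz-diff} applied with $X = \Le^p(0,\sT;\Le^{q}(\Omega))$, $f = \Phi_{p,q}$ and $\nu = p>2$, up to the harmless factor $p$. I then only need to verify the three hypotheses of that lemma for $f = \Phi_{p,q}$.

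First, $\Phi_{p,q}$ is a norm, hence globally $1$-Lipschitz, so the local Lipschitz condition at $0$ is trivially satisfied. Second, continuous $C^2$-differentiability of $\Phi_{p,q}$ on $\Le^p(0,\sT;\Le^{q}(\Omega))\setminus\{0\}$ follows from Proposition~\ref{prop:norm-differentiability}, since $\Upsilon_q$ is $C^2$ away from $0$ on $\Le^q(\Omega)$ (either by $q=2$ Hilbert, or by Proposition~\ref{prop:norm-differentiability} with $X=\R$ for $q>2$) and the Hölder bound $|\Upsilon_q''(f)h_1h_2|\lesssim \|f\|_{\Le^q(\Omega)}^{-1}\|h_1\|_{\Le^q(\Omega)}\|h_2\|_{\Le^q(\Omega)}$ yields uniform boundedness of $\Upsilon_q''$ on the unit sphere. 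Third, the very same Hölder-type estimate was already stated right before the corollary for $\Phi''_{p,q}$ and gives precisely the decay $\|\Phi''_{p,q}(y)\|_{\cL(X\times X;\R)} \lesssim \|y\|_{X}^{-1}$ required by the second part of Lemma~\ref{lem:lipschitz-diff}. Applying that lemma furnishes twice continuous differentiability of $\Psi_{p,q}$ on the whole space $\Le^p(0,\sT;\Le^{q}(\Omega))$, with $\Psi_{p,q}'(0)=0$ and $\Psi_{p,q}''(0)=0$.

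It remains to identify the derivatives at $y\neq 0$. From $\Psi_{p,q}(y)=\tfrac1p \Phi_{p,q}(y)^p$, the chain rule gives
\begin{equation*}
  \Psi_{p,q}'(y)h = \Phi_{p,q}(y)^{p-1}\Phi_{p,q}'(y)h.
\end{equation*}
Substituting the expression for $\Phi'_{p,q}(y)h$ stated in the paragraph before the corollary, the prefactor $\|y\|_{\Le^p(0,\sT;\Le^q(\Omega))}^{p-1}$ exactly cancels the factor $\|y\|_{\Le^p(0,\sT;\Le^q(\Omega))}^{1-p}$ therein, leaving the claimed first formula; unfolding $\Upsilon_q'(y(t))h(t) = \|y(t)\|_{\Le^q(\Omega)}^{1-q}\bip{|y(t)|^{q-2}y(t),h(t)}$ produces the second form. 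For the second derivative, I differentiate once more to get
\begin{equation*}
  \Psi_{p,q}''(y)h^2 = (p-1)\Phi_{p,q}(y)^{p-2}\bigl(\Phi_{p,q}'(y)h\bigr)^2 + \Phi_{p,q}(y)^{p-1}\Phi_{p,q}''(y)h^2,
\end{equation*}
insert the formulas for $\Phi'_{p,q}$ and $\Phi''_{p,q}$, and observe that the mixed term in $\Phi''_{p,q}$ involving $\prod_i \int_0^\sT \|y(t)\|^{p-1}_{\Le^q(\Omega)}\Upsilon_q'(y(t))h_i(t)\dd t$ precisely cancels with the first summand, because both appear with coefficients $\pm(p-1)\Phi_{p,q}(y)^{p-2}$ after simplification. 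The surviving terms are the $\Upsilon_q'$-quadratic piece (with coefficient $p-q$, as $(p-1)-(q-1)=p-q$) and the $\Upsilon_q''$-piece (with coefficient $q-1$), yielding the stated expression.

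The main obstacle is the bookkeeping in the last paragraph: making sure the Hölder-derived bound on $\Phi''_{p,q}$ is stated in the precise norm form required by Lemma~\ref{lem:lipschitz-diff}, and carrying out the cancellation between the two summands in $\Psi''_{p,q}$ correctly so that the coefficients $p-q$ and $q-1$ emerge. Everything else is an immediate consequence of previously established results.
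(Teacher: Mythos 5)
Your proposal is correct and follows exactly the paper's route: apply Lemma~\ref{lem:lipschitz-diff} with $f=\Phi_{p,q}$, $\nu=p$, using the norm's Lipschitz property, the $C^2$-regularity away from zero from Proposition~\ref{prop:norm-differentiability}, and the H\"older bound $\|\Phi_{p,q}''(y)\|\lesssim\|y\|^{-1}$, then compute the derivative formulas by the chain rule. The only difference is that you carry out the chain-rule bookkeeping (including the cancellation of the $\bigl(\Phi_{p,q}'(y)h\bigr)^2$ terms and the coefficient identity $(p-1)-(q-1)=p-q$) explicitly, which the paper leaves as ``a straightforward calculation''.
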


\section*{Acknowledgements}

The work of Karl Kunisch was partly supported by the ERC
advanced grant~668998 (OCLOC) under the EU~H2020 research
program.



\bibliographystyle{siam}
\bibliography{Wave}

\end{document}